\newcommand{\xoverbrace}[2][\vphantom{\dfrac{A}{A}}]{\overbrace{#1#2}}
\newtheorem{example}{Example}[section]
\newtheorem{goal}{Research Goal}[section]
\title[IO to Learn Cost Function in GNEP]{Using Inverse Optimization to Learn Cost Functions in Generalized Nash Games}
\author[Allen, Dickerson, \& Gabriel]{Stephanie Allen, University of Maryland, College Park \\ John P.~Dickerson, University of Maryland, College Park \\
Steven A.~Gabriel, University of Maryland, College Park \& Norwegian University of Science and Technology}
\begin{abstract}
As demonstrated by \citet{ratliff2014social}, inverse optimization can be used to recover the objective function parameters of players in multi-player Nash games.  These games involve the optimization problems of multiple players in which the players can affect each other in their objective functions. In generalized Nash equilibrium problems (GNEPs), a player's set of feasible actions is also impacted by the actions taken by other players in the game; see \citet{facchinei2010generalized} for more background on this problem.  One example of such impact comes in the form of joint/``coupled'' constraints as referenced by \citet{rosen1965existence}, \citet{harker1991generalized}, and \citet{facchinei2007generalized} which involve other players' variables in the constraints of the feasible region.   We extend the framework of \citet{ratliff2014social} to find inverse optimization solutions for the class of GNEPs with joint constraints.  The resulting formulation is then applied to a simulated multi-player transportation problem on a road network.  Also, we provide some theoretical results related to this transportation problem regarding runtime of the extended framework as well as uniqueness and nonuniqueness of solutions to our simulation experiments.  We see that our model recovers parameterizations that produce the same flow patterns as the original parameterizations and that this holds true across multiple networks, different assumptions regarding players' perceived costs, and the majority of restrictive capacity settings and the associated numbers of players.  Code for the project can be found at: \url{https://github.com/sallen7/IO_GNEP} 

\end{abstract}
\begin{document}

\begin{titlepage}

\maketitle

\end{titlepage}

\section{Introduction}\label{sec:intro} 

In traditional optimization problems, a model with exogenous data is given, and the goal is to find a feasible solution that maximizes or minimizes a given objective function.  In \emph{inverse optimization} (IO) problems, a particular solution or set of solutions is instead given as input, and the goal is to find the parameters of the \emph{optimization problem} that resulted in that observed solution or set of solutions~\citep{ahuja2001inverse,bertsimas2015data,chan2019inverse,zhang2011inverse,ghobadi2021inferring,chan2020inverse,tan2020learning}.  Applications of inverse optimization are myriad, including those in medical~\citep{chan2014generalized}, energy~\citep{saez2017short}, and transportation areas~\citep{zhang2018price}.  

As a more specific example of inverse optimization, we may take as input observations of strategic behavior from rational, utility-maximizing players acting in equilibrium in a non-cooperative game and then infer the utility functions being optimized by those players' behaviors, as was done by those such as \citet{ratliff2014social}, \citet{waugh2011computational,waugh2013computational}, and \citet{risanger2020inverse}. More specifically, \citet{ratliff2014social} observe the outputs of a multi-player Nash game in which the players can affect each other's objective functions in their individualized optimization problems and, then, utilize those outputs to parameterize the players' objective functions.  In the present paper, we focus on the context of a multi-player transportation generalized Nash game with a joint constraint in which players travel across a road network with the goal of minimizing travel time, and we utilize their observed behavior to parameterize their underlying cost functions in the game (see Figure~\ref{fig:illustration_of_framework}).  In parameterizing this model, we extend \citet{ratliff2014social}'s framework to problems with ``coupled''/joint constraints as referenced by \citet{rosen1965existence}, \citet{harker1991generalized}, and \citet{facchinei2007generalized} which involve other players' variables in the constraints of the feasible region.  This means we are parameterizing the objective functions of players in a game in which the players can affect not only each other's objective functions but also each other's feasible regions.  These games with joint constraints are a particular instance of \emph{generalized Nash equilibrium problems (GNEP)} \cite{rosen1965existence,harker1991generalized,facchinei2007generalized,facchinei2010generalized,gabriel2012complementarity}.  In the case of the transportation problem, the joint constraint is a capacity constraint that requires the flow across all players on the various arcs to stay below a certain threshold.

\begin{wrapfigure}{r}{0.5\textwidth}
\centering
\includegraphics[width=0.48\textwidth]{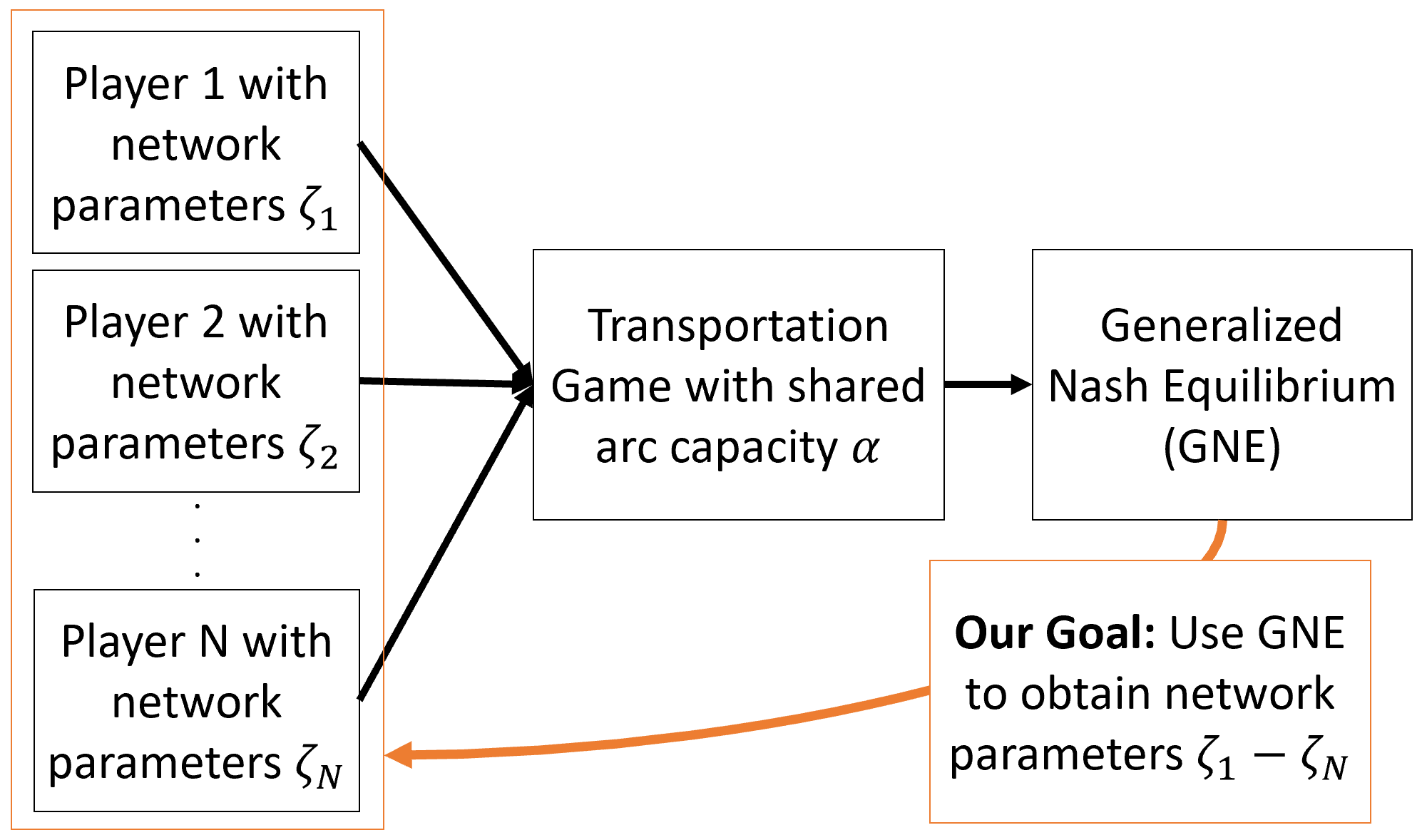}
\caption{
Framework for the Transportation Game
}\label{fig:illustration_of_framework}
\end{wrapfigure}

Although we focus on this transportation problem, we propose that our extension of the \citet{ratliff2014social} framework works beyond this problem, and it will be presented more generally in Section \ref{sec:theory-background}.  We validate its use on small but illustrative transportation networks, showing via simulated experiments with multiple players that we can recover objective function parameters for the players using observed player behavior in GNEPs with joint constraints such that the same flow patterns hold under the recovered parameterizations as under the original cost parameterizations. We also provide some supporting theoretical results regarding a special case of our transportation problem and the existence of unique solutions to this transportation problem.


\subsection{Related Work}\label{sec:intro-rw}

Work due to~\citet{keshavarz2011imputing} on parameterizing the objective function of a convex program has been influential in the inverse optimization community and has been discussed and referenced in multiple other inverse optimization papers such as \citep{ratliff2014social,bertsimas2015data,aswani2018inverse,esfahani2018data}.  \citet{ratliff2014social} apply the ideas of that paper to the context of game theory by making the argument that the KKT conditions \citep{boyd2004convex} and other added constraints parallel different properties of Nash equilibria, as further illustrated by \cite{rosen1965existence} and \cite{ratliff2013characterization}.  \citet{ratliff2014social} use data to parameterize the objective functions of players in an energy game.  This combination of game theory with models/algorithms to learn parameterizations is not new; indeed, there is a subcommunity of the economics and computer science community that seeks to estimate the parameters of the players in their games \citep{waugh2013computational,waugh2011computational,ratliff2014social,peysakhovich2019robust,risanger2020inverse,nekipelov2015econometrics}.  Notably, \citet{nekipelov2015econometrics} reconstructs the incentives of players using data by assuming no-regret learning on the part of the players, which allows for more realistic modeling of players than the stronger Nash assumption of perfect knowledge. \citet{waugh2013computational,waugh2011computational} also encode the requirement of low regret into their inverse optimization model of multi-player interaction in a correlated equilibrium game while maximizing entropy of the mixed-strategy actions of the players.  Finally, \citet{peysakhovich2019robust} point to the importance of having robust methods when estimating players' parameters because our rationality assumptions can be problematic, and our models of the players might be incorrect.

With regard to transportation problems, our appliation of interest in the present work, the methods of inverse optimization and game theory have both been applied before.  The inverse optimization literature on transportation problems has focused on parameterizing cost functions \citep{xu2018network,zhang2017data,nguyen2010application,duin2006some,jain2013inverse}, finding capacity values on a network \citep{chow2014nonlinear}, and also on finding the ``weights'' for components of objective functions in transportation-themed models \citep{chow2012inverse,chow2015activity}.  In reference to game theory and transportation, \citet{elvik2014review} reviews a number of game theory models that deal with such transportation phenomenon as ``speed choice'' and ``merging,'' and \citet{zhang2010review} identify types of games that appear in transportation research which include generalized Nash equilibrium games.  In a recent work, \citet{stein2018noncooperative} solve a multi-player version of the bipartite graph transportation problem as a GNEP, thus demonstrating that generalized Nash problems are a difficult problem type that is still being explored. The merging of game theory/equilibrium problems, the estimation of parameters for these problems, and transportation has been seen in a number of papers~\citep{thai2015multi,bertsimas2015data,zhang2017data,zhang2018price,chow2014nonlinear,thai2017learnability,kuleshov2015inverse,konstantakopoulos2019deep,thai2018imputing,bhaskar2019achieving}. 
In particular, \citet{thai2017learnability,kuleshov2015inverse}, and \citet{bhaskar2019achieving} focus on and/or cover the combination of routing games in transportation with estimating parameters, a category into which our paper more specifically fits.  Other researchers have pursued similar work to ours in the inverse GNEP space \citep{hemmecke2009nash}, taking more of a pure mathematical approach.  Still others have utilized a joint capacity constraint in their models of player movement \cite{xu2018network} in an effort to estimate ``network dual prices'' for the constraint. 

For the papers that closely align with the current paper, we contrast their methodology with ours.  In \citet{thai2017learnability}, the researchers focus on ``minimizing risk'' with regard to estimating the cost functions in a routing game, and the reseachers do not deal with any joint constraints.  By contrast, we focus on KKT/optimality conditions instead of statistical quantities and tackle a different kind of problem in the form of a generalized Nash equilibrium problem.  In \citet{bhaskar2019achieving}, the researchers approach estimating parameters in a routing problem through a ``query'' system in which they: (a) control part of the objective function to obtain a desired flow pattern and (b) control part of the flow to obtain a desired flow pattern.  This contrasts with our approach since we recover a set of parameterizations instead of influencing the flow, and we again work with a generalized Nash game while these researchers do not.  \citet{kuleshov2015inverse} most closely align with our work because the researchers' work can be directly applied to atomic routing games, as opposed to the previous two papers \cite{thai2017learnability,bhaskar2019achieving} that focused upon non-atomic routing games in which individual players are not considered (see \cite{AGT_routing_games} for more details on atomic versus non-atomic routing games).  
While \citet{kuleshov2015inverse} devise a mathematical program to solve the problem of estimating parameters, similar to our approach, they do not deal with a joint constraint in their approach.  \citet{hemmecke2009nash} focus on a theoretical framework for finding cost functions in generalized Nash games, which contrasts with our approach that extends an applied framework using optimality conditions to find cost functions in generalized Nash games with joint constraints.  Finally, \citet{xu2018network} formulate a set of linear programs based on work by \citet{ahuja2001inverse} to handle their problem of finding ``network dual prices'' for their joint capacity constraint and utilize an averaging algorithm to find these prices.  Although \citet{xu2018network} work with a joint constraint, we work with and extend another model from \citet{ratliff2014social} and \citet{keshavarz2011imputing} that has been influential in the literature (as seen in \cite{ratliff2014social,bertsimas2015data,aswani2018inverse,esfahani2018data}). 

\subsection{Our Contribution}\label{sec:intro-contrib}

We contribute to the literature by extending the parameterization framework due to~\citet{ratliff2014social} to the context of GNEP games in which there are joint constraints among the players' actions~\citep{rosen1965existence,harker1991generalized,facchinei2007generalized,facchinei2010generalized,gabriel2012complementarity}.  The approach is motivated in particular by a transportation game in which there is a joint flow constraint for arcs in a road network, due to the fact that roads naturally have capacity limitations.  We differ from the transportation papers that combine equilibrium problems with inverse optimization, such as \citet{bertsimas2015data,zhang2018price}, and the other papers listed above involving estimating parameters, including the non-atomic routing game literature of \citet{thai2017learnability} and \citet{bhaskar2019achieving}, because our method models traffic at the level of the player and includes this joint constraint.  Our approach also differs from \citet{kuleshov2015inverse} who model traffic at the level of the player because of this joint constraint.  What's more, while \citet{hemmecke2009nash} discuss theoretical results with regard to inverse optimization and GNEPs, we pursue an applied framework to recover parameterizations for cost functions.  Finally, although one of \citet{xu2018network}'s models includes a joint constraint, we focus on extending the framework by \citet{ratliff2014social} and \citet{keshavarz2011imputing} as opposed to their use of \citet{ahuja2001inverse}'s framework.  Our proposed approach can take data from simulations (as in this work) or real-world observations (in the case of a real-world policy-maker) involving equilibrium solutions to these problems, and can parameterize the objective functions of these players.  The contribution to the literature is significant because we extend a framework that has been influential in the literature (as seen in \cite{ratliff2014social,bertsimas2015data,aswani2018inverse,esfahani2018data}) to this new class of problems.  

\section{Preliminaries}\label{sec:prelims}

We examine the problem of combining $N$-players' shortest path problems to create a traffic game in which the $N$ players affect each other's costs on the network via interaction terms in their objective functions as well as via a joint capacity constraint in the feasible region.
The problem parallels what is known as an atomic routing game \cite{AGT_routing_games} which features multiple players each with the ability to affect the outcome of the traffic game and which can be represented as a multi-user Nash equilibrium game \cite{cominetti2006network,orda1993competitive}.  However, atomic routing games as presented in \citet{AGT_routing_games}
do not have joint constraints.  We do note that this model was also inspired by \citet{xu2018network} who formulate their problem as a set of \textit{inverse} shortest path problems and who did include a joint capacity constraint in one of their formulations.

For a network with $n$ arcs and $m$ nodes, let $x_i \in \mathbb{R}^n$ to be the flow decision variables for the network arcs for each player $i \in [N]$, which can be fractional.  The positive diagonal matrix $\mathcal{C}_i \in \mathbb{R}^{n\times n}$ represents the coefficients for the interactions between player $i$'s flow and the aggregate flow, capturing the dynamic costs associated with increasing flow from the players.  The vector $\bar{c}_i \in \mathbb{R}^n$ represents the base cost for traveling along each link in the network; this is also called the free flow travel time by various papers~\citep{siri2020progressive,thai2015multi,bertsimas2015data,zhang2017data,zhang2018price,chow2014nonlinear}.  We also take $D \in \{ -1,0,1 \}^{m \times n}$ as the conservation-of-flow matrix which keeps track of the net flow at the nodes~\citep{marcotte2007traffic} and $\alpha \in \mathbb{R}^n$ as the collective capacity for the $N$ players' flow on each arc.  The vector $f_i \in \mathbb{R}^m$ for each player $i$ indicates the starting (in negative units of flow) and ending (in positive units of flow) points in the network of the flow for each player $i \in [N]$.  As an example for $f_i$, if player $i=1$ begins at node 2 and ends at node 12 in a network with 16 nodes, then their $f_1$ vector would have a dimension of $16x1$, and we would place a $-1$ at entry 2 and a 1 at entry 12.   Given all of these variable and parameter definitions, the resulting optimization problem faced by player $i$ is:

\begin{subequations}\label{new_GNE}
\begin{equation}
    \min\limits_{x_i} x_i^T \mathcal{C}_i \left(\sum\limits_{j=1}^N x_j\right) + \bar{c_i}^T x_i  
\end{equation}
\begin{align}
    D x_i = f_i &\text{ (dual variable: }v_i\text{)} \label{new_GNE_conservation} \\
    x_i \geq 0 & \text{ (dual variable: }u_i\text{)} \label{new_GNE_non_neg} \\
    \sum\limits_{j=1}^{N} x_j \leq \alpha & \text{ (dual variable: }\bar{u}\text{)} \label{new_GNE_joint_constraint}
\end{align}
\end{subequations}

\noindent This is a $N$-player traffic game in which the $N$ players affect each other's costs on the network via the objective function, which has an interaction term between player $i$'s flow and all other players' flow and a term representing the base cost for traveling along each arc.  Also, there is a shared constraint in the flows (\ref{new_GNE_joint_constraint}).
This constraint makes this problem a generalized Nash equilibrium problem \cite{gabriel2012complementarity,harker1991generalized,facchinei2007generalized,facchinei2010generalized,rosen1965existence}.  We also have the conservation of flow constraints in (\ref{new_GNE_conservation}) for each player $i$ and the non-negativity constraints in (\ref{new_GNE_non_neg}) for each player $i$.  With this notation defined, we are able to state the goal of the paper. 

\begin{goal}
The goal in this paper is to find the $\mathcal{C}_i$ diagonal matrix and the $\bar{c}_i$ vector associated with this problem for each player $i$. 
\end{goal}

We will examine the case in which these matrices and vectors are equal across all $N$ players and in which they are different across all $N$ players.  The first case represents the situation in which each player faces the same interaction costs and free flow travel times as all other players which is a common assumption in aggregate models such as \cite{bertsimas2015data,thai2015multi,thai2018imputing}.  The second case represents a view in which the players do not have a consensus view of the interaction costs or the free flow travel times, as proposed by \cite{xu2018network,chow2012inverse,chow2015activity}.    

In order to obtain these matrix and vector parameters, we extend the parameterization formulation proposed by \citet{ratliff2014social} to GNEPs with joint constraints.  \citet{ratliff2014social} call their formulation a residual model after the similar \citet{keshavarz2011imputing} model. Our extended residual model is defined as follows for the $N$ players~\citep[see, e.g.,][]{ratliff2014social,konstantakopoulos2017robust,keshavarz2011imputing}:
\begin{subequations}\label{empirical_residual_model_1}
\begin{small}
\begin{equation*}
    \min\limits_{\mathcal{C}_i,\bar{c}_i,v_i^k,u_i^k,\bar{u}^k} \sum\limits_{k} \left( \xoverbrace{\sum\limits_{i=1}^N ||\mathcal{C}_i \left(2 x_i^k + \sum\limits_{j\neq i}x_{j}^k\right) + \bar{c}_i + D^T v_i^k - u_i^k + \bar{u}^k||_1}^{\text{stationarity}} + \xoverbrace{\sum\limits_{i=1}^{N} ||(x_i^k)^T u_i^k ||_1}^{\text{complementarity \#1}} + \xoverbrace{||(\alpha-\sum\limits_{j}x_{j}^k)^T(\bar{u}^k)||_1}^{\text{complementarity \#2}} \right)
\end{equation*}
\end{small}
\begin{equation}\label{empirical_residual_model_1:C_cal_bounds}
    L_1 \leq diag(\mathcal{C}_i) \leq U_1 \ \forall i
\end{equation}
\begin{equation}\label{empirical_residual_model_1:c_bar_bounds}
    L_2 \leq \bar{c}_i \leq U_2 \ \forall i
\end{equation}
\begin{equation}\label{empirical_residual_model_1:dual_var_bounds}
    \bar{u}^k \geq 0\ \forall k\ u_i^k \geq 0\ \forall i,k
\end{equation}
\end{subequations}

At a high level, our objective function naturally decomposes into three parts (labeled above as ``stationarity,'' ``complementarity \#1,'' and ``complementarity \#2''); we now describe the intuition behind each part.
\begin{itemize}
    \item The \textbf{stationarity} component is a norm corresponding to the stationarity conditions of the optimization problem for each player $i$ and each data piece $k$, with the data pieces representing the flow patterns we observe between the origin-destination pairs on the network.  For this norm, the $x_i^k$ are data of the flow patterns for each player $i$ and each origin-destination pair $k$, and our variables are the diagonal of the $\mathcal{C}_i\ \forall i$, the vector $\bar{c}_i\ \forall i$, $u_i^k$ as the dual variables for constraint \ref{new_GNE_non_neg} for all $i,k$, $v_i^k$ as the dual variables for constraint \ref{new_GNE_conservation} for all $i,k$, and $\bar{u}^k$ as the dual variables for constraint \ref{new_GNE_joint_constraint} for each data piece/origin-destination pair $k$.  An origin-destination (OD) pair indicates the beginning (origin) and ending (destination) points of a player's path. We have one set of $\bar{u}^k$ dual variables for each $k$ that are shared across the players, which will be further explained in Section \ref{sec:theory-background}. 
    \item The \textbf{complementarity \#1} norm refers to the complementarity conditions for the nonnegativity constraint across the data and the players
    \item The \textbf{complementarity \#2} norm  refers to the complementarity conditions that pair with the joint constraint for each data piece.
\end{itemize}
In conjunction, these three components of the objective allow us to minimize the error across the relevant KKT conditions for all players and all data.  The constraints on this objective include requiring that the diagonal of the $\mathcal{C}_i$ matrices remain between two bounds $L_1\in \mathbb{R}^n$ and $U_1\in \mathbb{R}^n$ as captured in (\ref{empirical_residual_model_1:C_cal_bounds}) and that the vectors $\bar{c}_i$ remain between two bounds $L_2\in \mathbb{R}^n$ and $U_2\in \mathbb{R}^n$ as captured in (\ref{empirical_residual_model_1:c_bar_bounds}).  The use of upper and lower bounds for the $\mathcal{C}_i$ and $\bar{c}_i$ was inspired by papers such as \cite{keshavarz2011imputing,ratliff2014social,konstantakopoulos2017robust,chan2019inverse,bertsimas2015data} which discuss utilizing ``prior information'' \cite{keshavarz2011imputing} pertaining to the parameters to be estimated to aid the mathematical program in finding viable parameterizations.  We remember that the diagonals of $\mathcal{C}_i$ represent the interaction costs for each player $i$ and the vectors $\bar{c}_i$ represent the free flow travel cost for each player $i$ so, by placing bounds on these variables, their estimated values will be on the same order of magnitude as the original values that generated the simulation data.\footnote{We do want to ensure that a set of parameterizations results in a convex Nash game since we are dealing with a set of minimization problems across players, and convex games result in Nash equilibrium solutions \cite{rosen1965existence,ratliff2014social,orda1993competitive}.  Due to the fact that our minimization problems for all of the players are linear in each player's variables, we do not require any extra constraints \cite{rosen1965existence,ratliff2014social,orda1993competitive}, but this might be required for other applications as demonstrated by \cite{konstantakopoulos2017robust}.}  The final constraint (\ref{empirical_residual_model_1:dual_var_bounds}) on the objective requires that the dual variables $\bar{u}^k$ and $u_i^k$ are non-negative because they correspond with inequality constraints.

One important point about this model is that it can be solved in polynomial time.  From \citet{boyd2004convex}, we know we can convert the L-1 norm components into linear constraints, with positive variables that bound the constraints from above and below and that replace the L-1 terms in the objective function.  This leads to a linear program, which we also know from \cite{boyd2004convex} can be solved in polynomial time via the interior point method.  Furthermore, the growth in the number of variables as a function of the inputs also grows as a polynomial function, the details of which can be seen in Appendix \ref{appendix:variable_number}.  Overall, we summarize all of this in the following lemma:
\begin{lemma}\label{complexity_lemma}
Problem (\ref{empirical_residual_model_1}) can be solved in polynomial time, and the number of variables grows as a polynomial function of the inputs.
\end{lemma}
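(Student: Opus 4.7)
The plan is to establish both claims by an explicit reformulation of Problem (\ref{empirical_residual_model_1}) as a linear program whose size is polynomial in the input parameters $N$ (number of players), $n$ (number of arcs), $m$ (number of nodes), and $K$ (number of observed flow patterns/OD pairs). The key preliminary observation is that in the residual model, the observed flows $x_i^k$ together with $D$, $\alpha$, $L_1,U_1,L_2,U_2$ are \emph{data}, while the unknowns are the diagonal entries of $\mathcal{C}_i$, the vectors $\bar{c}_i$, and the dual multipliers $v_i^k$, $u_i^k$, $\bar{u}^k$. With this in hand, the stationarity vector $\mathcal{C}_i(2x_i^k+\sum_{j\neq i}x_j^k)+\bar{c}_i+D^Tv_i^k-u_i^k+\bar{u}^k$ is an affine function of the decision variables, and each of the complementarity expressions $(x_i^k)^Tu_i^k$ and $(\alpha-\sum_j x_j^k)^T\bar{u}^k$ is likewise affine, since only the multipliers are variable.

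Given this, the first step is to apply the standard epigraphical reformulation of the $\ell_1$-norm as cited in \citet{boyd2004convex}: for any affine vector expression $A y + b$, the term $\|A y + b\|_1$ can be replaced by $\mathbf{1}^T t$ together with the linear constraints $-t \le A y + b \le t$ and $t \ge 0$. I would apply this reformulation componentwise to each of the three pieces of the objective, introducing one auxiliary vector $t_i^{k,\text{stat}}\in\mathbb{R}^n$ per stationarity term, one auxiliary scalar $t_i^{k,\text{cpl1}}$ per complementarity~\#1 term, and one auxiliary scalar $t^{k,\text{cpl2}}$ per complementarity~\#2 term. Together with the existing bound constraints (\ref{empirical_residual_model_1:C_cal_bounds})--(\ref{empirical_residual_model_1:dual_var_bounds}), the resulting problem has a linear objective and only linear constraints, hence is a linear program.

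The second step is to count variables and constraints to verify polynomial growth. The original decision variables are at most $Nn$ (diagonals of $\mathcal{C}_i$), $Nn$ ($\bar{c}_i$), $NmK$ ($v_i^k$), $NnK$ ($u_i^k$), and $nK$ ($\bar{u}^k$); the auxiliaries add at most $NnK+NK+K$ further variables. Summing yields $O\!\left(N(n+m)K + nK\right)$ variables, and a similar count gives $O\!\left(N(n+m)K\right)$ linear constraints. This matches the accounting promised in Appendix \ref{appendix:variable_number} and establishes the polynomial-growth part of the lemma. Finally, since this reformulated problem is a linear program, applying any polynomial-time LP algorithm (e.g., the interior-point method as cited in \citet{boyd2004convex}) yields the polynomial-time solvability claim, completing the proof.

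The only point requiring any care, and which I would flag as the main (though minor) obstacle, is confirming linearity of the complementarity terms: one must resist the reflex to treat $(x_i^k)^Tu_i^k$ as a bilinear (and hence nonconvex) expression. It is crucial that the $x_i^k$ enter as fixed observations rather than as optimization variables; once this is made explicit, the whole objective collapses to a sum of $\ell_1$-norms of affine functions and the LP reformulation proceeds without obstruction.
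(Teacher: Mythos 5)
Your proposal is correct and follows essentially the same route as the paper: convert each $\ell_1$ term (which is affine in the unknowns since the $x_i^k$ are data) into linear constraints via the standard bounding-variable reformulation, observe the result is a linear program solvable in polynomial time by interior-point methods, and count variables to confirm polynomial growth. Your explicit remark that the complementarity terms are linear because the flows enter as fixed observations is a useful clarification that the paper leaves implicit, and your count $O\!\left(N(n+m)K\right)$ is consistent with the paper's Appendix~\ref{appendix:variable_number} totals once $K=2\binom{m}{2}$ is substituted.
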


Having established the complexity of our inverse optimization model, we can now discuss an example of our framework.

\begin{example}\label{example_of_framework}
We randomly generate a parameterization for the arcs on a 4x4 grid which consists of 48 total arcs.  There are 48 arcs on this grid because there are 4 sets of 3 edges moving down the grid and then the same pattern moving across the grid; therefore, we multiply $(4)(3)$ by 2 and, then, multiply it by 2 again because we assume two arcs going in opposite directions per edge.  We generate this parameterization by uniformly choosing 48 numbers between 2 and 10 for the diagonal of $\mathcal{C}_i$ such that $\mathcal{C}_1 = ... = \mathcal{C}_N = \mathcal{C}$ and uniformly choosing 48 numbers between 2 and 10 for the vector $\bar{c}_i$ such that $\bar{c}_1 = ... = \bar{c}_N = \bar{c}$.  We set the number of players as $N=10$ and then generate the flows for these parameterizations using all 240 origin-destination (OD) pairs - a number obtained by calculating $\binom{16}{2}$, since there are 16 nodes in a 4x4 grid, and then multiplying by 2 to account for our assumption that the arcs from origin $a$ to destination $b$ might be different from the arc from origin $b$ to destination $a$.  For each OD pair, all of the players each start this one unit of flow at the origin and flow this one unit of flow to the destination, meaning that for $N=10$ players there are 10 units of flow on the network. We then input the resulting flows under these origin-destination pairs into the residual model above (\ref{empirical_residual_model_1}) to obtain the parameterizations for $\mathcal{C}$ and $\bar{c}$.  
\end{example}

With regard to the results of example \ref{example_of_framework}, the original parameterizations and the parameterizations obtained by the inverse optimization residual model do not align.  Indeed, the Frobenius norm difference between the two $\mathcal{C}$ matrices is 2.4777, and the 2-norm difference between the two parameterizations for the $\bar{c}$ vector is 22.4281.  However, if we instead use an error metric that compares the flows under the original parameterizations versus the flows under the IO parameterizations, we see promising results.  Indeed, when we compare the flows among the 10 players for all 240 origin destination pairs across all 48 arcs on the network, we find that the difference between the flow patterns using a Frobenius norm metric is 8.1369e-06.  We utilize the Frobenius norm because it allows us to calculate essentially a 2-norm between the two flow matrices, one resulting from the original costs and one resulting from the IO costs.  Consequently, as indicated by the low Frobenius norm error, the IO solution leads to the same type of flow pattern observed under the original parameterization for the set of OD pair configurations considered, thus making the parameters obtained by the IO residual model a reasonable solution to the inverse optimization problem.  

We have now explained the framework at a basic level.  Next,  Section~\ref{sec:theory-background} discusses the connection between the residual model \citep{keshavarz2011imputing,ratliff2014social} and the generalized Nash equilibrium problem (GNEP) along with some other theoretical desiderata and preliminaries. Section~\ref{sec:numerical-setup} presents the explicit models used and the experimental framework.  Section~\ref{sec:experiments} presents our experimental results, and Section~\ref{sec:conclusions} concludes with a brief discussion and open problems.

\section{Theoretical Background and Connections}\label{sec:theory-background}

In this section, we describe our simulation approach, form the residual model more abstractly, and then make the connection between the simulation conditions and the residual model.  In Section \ref{sec:theory-QVI}, we discuss the connection between the subclass of Generalized Nash Equilibrium Problems and stacked KKT conditions; we use these stacked KKT conditions as our simulation model and thus our data generation procedure.  In Section \ref{sec:theory-residual}, we explain the residual model utilized by \citet{ratliff2014social}.  Finally, in Section \ref{sec:theory-gnep}, we propose and prove a collorary stating that we can use the stacked KKT conditions in the residual model to recover the parameterizations for the objective functions of the players in a GNEP.


\subsection{Theoretical Considerations for Generating the Simulation Data}\label{sec:theory-QVI}

Using the notation from \citet{facchinei2010generalized}, for each player $v$ in a generalized Nash problem, the goal is to solve the following optimization problem, with $N$ as the number of players, $n_v$ as the number of variables for player $v$, $x^v \in \mathbb{R}^{n_v}$ as the variables for player $v$, $\mathbf{x}^{-v}$ as containing all other players' variables, $X_v(\mathbf{x}^{-v}) \subseteq \mathbb{R}^{n_v}$ as the ``feasible set'' parameterized by the other players' variables, $\theta:\mathbb{R}^{\bar{n}} \rightarrow \mathbb{R}^{N}$ with $\bar{n} = \sum_v n_v$, and $\theta_v:\mathbb{R}^{\bar{n}} \rightarrow \mathbb{R}$ (with the recognition that the last function has $\mathbf{x}^{-v}$ as parameters in its functional form once we move to the GNEP problem)
\begin{subequations}\label{GNE_general_form}
\begin{equation}
    \min\limits_{x^v} \theta_v(x^v,\mathbf{x}^{-v})
\end{equation}
\begin{equation}
    x^v \in X_v(\mathbf{x}^{-v})
\end{equation}
\end{subequations}

\citet{facchinei2010generalized} and \citet{gabriel2012complementarity} convey that a solution to this problem is an $\mathbf{x}$ such that no player can minimize their objective further, fixing the other players' variables.  We can form a problem to find this solution (known as a generalized Nash equilibrium) as long as we have the Convexity Assumption stated in \cite{facchinei2010generalized}:

\vspace{1em}

\noindent \textbf{Convexity Assumption~\cite{facchinei2010generalized}.} For every player $v$ and every $\mathbf{x}^{-v}$, the objective function $\theta(\cdot,\mathbf{x}^{-v})$ is convex and the set $X_v(\mathbf{x}^{-v})$ is closed and convex.

\vspace{1em}

\noindent With this Convexity Assumption, the following theorem holds in which the solution of a quasi-variational inequality QVI$(\mathbf{X}(\mathbf{x}),\mathbf{F}(\mathbf{x}))$ problem is defined as $\mathbf{x}^* \in \mathbf{X}(\mathbf{x}^*)\subseteq \mathbb{R}^{\bar{n}}$ such that 

\begin{equation}
    \mathbf{F}(\mathbf{x}^*)^T (\mathbf{y} - \mathbf{x}^*) \geq 0\ \forall \mathbf{y} \in \mathbf{X}(\mathbf{x}^*)
\end{equation}

\begin{theorem}[Theorem 3.3 of \cite{facchinei2010generalized}] Let a GNEP be given, satisfying the Convexity Assumption, and suppose further that the $\theta_v$ functions are $C^1$ for all $v$.  Then a point $\mathbf{x}$ is a generalized Nash equilibrium if and only if it is a solution of the quasi-variational inequality QVI$(\mathbf{X}(\mathbf{x}),\mathbf{F}(\mathbf{x}))$, where
\begin{equation}\label{specific_QVI_formulation}
    \mathbf{X}(\mathbf{x}) = \prod\limits_{v=1}^{N} X_v(\mathbf{x}^{-v}),\ \ \mathbf{F}(\mathbf{x}) = \left(\nabla_{x^v} \theta_v(\mathbf{x}) \right)_{v=1}^N
\end{equation}

\end{theorem}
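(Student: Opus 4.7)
The plan is to prove both directions of the equivalence by reducing the quasi-variational inequality to per-player first-order optimality conditions, exploiting the product structure $\mathbf{X}(\mathbf{x}) = \prod_{v=1}^N X_v(\mathbf{x}^{-v})$. The key background fact is the standard characterization that, for a convex $C^1$ function $f$ over a closed convex set $C$, a point $x^*$ minimizes $f$ over $C$ if and only if $\nabla f(x^*)^T(y-x^*) \geq 0$ for all $y \in C$. The Convexity Assumption together with the hypothesis that each $\theta_v$ is $C^1$ is used precisely to invoke this bridge in each player's subproblem.

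For the forward direction ($\Rightarrow$), I would assume $\mathbf{x}^*$ is a generalized Nash equilibrium, so that for every player $v$ the block $x^{*v}$ minimizes $\theta_v(\,\cdot\,,\mathbf{x}^{*-v})$ over $X_v(\mathbf{x}^{*-v})$. Applying the first-order characterization yields, for each $v$, $\nabla_{x^v}\theta_v(\mathbf{x}^*)^T(y^v - x^{*v}) \geq 0$ for every $y^v \in X_v(\mathbf{x}^{*-v})$. Given any $\mathbf{y}\in\mathbf{X}(\mathbf{x}^*)$, the product structure gives $y^v\in X_v(\mathbf{x}^{*-v})$ for all $v$, and summing the $N$ inequalities produces $\mathbf{F}(\mathbf{x}^*)^T(\mathbf{y}-\mathbf{x}^*)\geq 0$, which is exactly the QVI condition.

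For the backward direction ($\Leftarrow$), I would fix $\mathbf{x}^* \in \mathbf{X}(\mathbf{x}^*)$ solving the QVI, pick an arbitrary player $v$ and an arbitrary $y^v\in X_v(\mathbf{x}^{*-v})$, and construct the test vector $\mathbf{y}$ whose $v$-block equals $y^v$ and whose remaining blocks equal $x^{*-v}$. Because $\mathbf{X}(\mathbf{x}^*)$ is a Cartesian product, this $\mathbf{y}$ lies in $\mathbf{X}(\mathbf{x}^*)$. Substituting into the QVI, all but the $v$-th term of $\mathbf{F}(\mathbf{x}^*)^T(\mathbf{y}-\mathbf{x}^*)$ vanish, leaving $\nabla_{x^v}\theta_v(\mathbf{x}^*)^T(y^v-x^{*v})\geq 0$. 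Since $y^v$ was arbitrary and $\theta_v(\,\cdot\,,\mathbf{x}^{*-v})$ is convex and $C^1$, the first-order characterization again implies that $x^{*v}$ minimizes the $v$-th subproblem; ranging over $v$ gives the GNE property.

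The main obstacle is really just a technical subtlety rather than a deep difficulty: the backward direction requires that unilateral single-block perturbations stay inside $\mathbf{X}(\mathbf{x}^*)$, which is precisely what the product definition $\mathbf{X}(\mathbf{x}) = \prod_v X_v(\mathbf{x}^{-v})$ secures. Without this product structure the construction of $\mathbf{y}$ above could fail, and the QVI would not be able to localize onto a single player. The Convexity Assumption and $C^1$ hypothesis are used only inside each subproblem to convert between variational inequalities and optimality, so no further machinery is needed.
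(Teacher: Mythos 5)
Your proof is correct and is the standard argument for this result. The paper itself gives no proof of this theorem --- it is quoted verbatim as Theorem 3.3 of the cited survey by Facchinei and Kanzow --- but your two-direction argument (first-order optimality of each convex $C^1$ subproblem in one direction, and single-block test vectors exploiting the Cartesian product structure of $\mathbf{X}(\mathbf{x})$ in the other) is precisely the proof given in that reference, with the key subtlety, namely that unilateral deviations remain feasible only because of the product form, correctly identified.
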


\noindent This QVI representation of the GNEP can be further simplified to a related variational inequality (VI) under certain conditions on the $X_v(\mathbf{x}^{-v})$.  For a variational inequality, the goal is to find $\mathbf{x}^* \in \mathbf{X} \subseteq \mathbb{R}^{\bar{n}}$, with $\mathbf{X}$ not parameterized by the variables, such that
\begin{equation}\label{VI_formulation}
    \mathbf{F}(\mathbf{x}^*)^T (\mathbf{y} - \mathbf{x}^*) \geq 0,\ \forall \mathbf{y} \in \mathbf{X}
\end{equation}

\noindent One way to transform the QVI into a VI is via joint convexity of the GNEP, a property that applies if the following definition holds pertaining to the $X_v(\mathbf{x}^{-v})$ sets:

\begin{definition}[Definition 3.6 of \cite{facchinei2010generalized}] Let a GNEP be given, satisfying the convexity assumption.  We say that this GNEP is jointly convex if for some closed, convex $\mathbf{X} \subseteq \mathbb{R}^{\bar{n}}$ and all $v=1,...,N$, we have 
\begin{equation}
    X_v(\mathbf{x}^{-v}) = \{ x^v \in \mathbb{R}^{n_v} : (x^v,\mathbf{x}^{-v}) \in \mathbf{X} \}
\end{equation}

\end{definition}

\noindent This definition means that the $x^v$ and $\mathbf{x}^{-v}$ are all part of the same set $\mathbf{X}$.  This means that we can combine the constraints from each set $X_v(\mathbf{x}^{-v})$ into one set $\mathbf{X}$, which also implies that we will have one copy of any constraints known as joint constraints that depend upon all the players' variables in the set $\mathbf{X}$.  Furthermore, by having one copy of the joint constraints, this equates the dual variables for this constraint across the players \cite{rosen1965existence,harker1991generalized,facchinei2007generalized,facchinei2010generalized,gabriel2012complementarity}.  As a result, we can utilize the following theorem:  

\begin{theorem}[Theorem 3.9 of \cite{facchinei2010generalized}] Let a jointly convex GNEP be given with $C^1$ functions $\theta_v$.  Then every solution of the $VI(\mathbf{X},\mathbf{F})$ (where $\mathbf{X}$ is the set in the definition of joint convexity and, as usual, $\mathbf{F}(\mathbf{x}) = \left(\nabla_{x^v} \theta_v(\mathbf{x}) \right)_{v=1}^N$), is also a solution of the GNEP.
\end{theorem}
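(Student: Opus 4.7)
The plan is to reduce the VI statement, player by player, to the first-order optimality conditions of each player's own minimization problem, and then appeal to convexity to conclude. I would start by letting $\mathbf{x}^*$ be any solution of $VI(\mathbf{X},\mathbf{F})$, so that
\begin{equation*}
    \mathbf{F}(\mathbf{x}^*)^T(\mathbf{y}-\mathbf{x}^*) \;=\; \sum_{v=1}^N \nabla_{x^v}\theta_v(\mathbf{x}^*)^T(y^v - x^{*v}) \;\geq\; 0 \quad \text{for all } \mathbf{y}\in\mathbf{X},
\end{equation*}
and then aim to show that $\mathbf{x}^*$ satisfies the GNEP definition, namely that for each fixed player $v$, the vector $x^{*v}$ minimizes $\theta_v(\,\cdot\,,\mathbf{x}^{*-v})$ over $X_v(\mathbf{x}^{*-v})$.

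The key step, and what makes joint convexity do the real work, is to plug in test vectors of the form $\mathbf{y} = (z^v,\mathbf{x}^{*-v})$ for an arbitrary $z^v \in X_v(\mathbf{x}^{*-v})$. By the definition of joint convexity, such a $\mathbf{y}$ lies in $\mathbf{X}$, so it is an admissible test vector in the VI. Since $\mathbf{y}$ and $\mathbf{x}^*$ differ only in the $v$-th block, all terms but one in the sum above vanish, and the inequality collapses to
\begin{equation*}
    \nabla_{x^v}\theta_v(\mathbf{x}^*)^T(z^v - x^{*v}) \;\geq\; 0 \quad \text{for all } z^v \in X_v(\mathbf{x}^{*-v}).
\end{equation*}
This is exactly the variational inequality characterizing the first-order optimality condition of the single-player problem $\min_{x^v \in X_v(\mathbf{x}^{*-v})} \theta_v(x^v,\mathbf{x}^{*-v})$.

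To finish, I would invoke the Convexity Assumption: for fixed $\mathbf{x}^{*-v}$, the map $x^v \mapsto \theta_v(x^v,\mathbf{x}^{*-v})$ is convex and $C^1$, and $X_v(\mathbf{x}^{*-v})$ is a closed convex set. A standard result for convex minimization over a convex feasible set states that the first-order VI condition above is equivalent to $x^{*v}$ being a global minimizer. Since $v$ was arbitrary, each player's unilateral best response is achieved at $\mathbf{x}^*$, which is exactly the GNEP definition (and, equivalently via Theorem 3.3, a solution of the associated QVI).

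I expect no real obstacles: the argument is essentially bookkeeping once joint convexity lets us use the "one-block-at-a-time" test vectors $(z^v,\mathbf{x}^{*-v})$ inside the global set $\mathbf{X}$. The one subtlety worth emphasizing is that this reduction would fail for a general (non-jointly-convex) GNEP, because then $(z^v,\mathbf{x}^{*-v})$ need not belong to $\mathbf{X}$, and one would instead have to work with the QVI whose feasible set itself depends on $\mathbf{x}^*$; joint convexity is precisely what decouples the feasibility check across players and turns the QVI into a genuine VI.
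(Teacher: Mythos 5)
Your proof is correct, and it is essentially the standard argument for this result: the paper itself does not prove this theorem but simply cites it from \citet{facchinei2010generalized}, where the proof proceeds exactly as you describe --- test the VI with the one-block perturbation $\mathbf{y}=(z^v,\mathbf{x}^{*-v})$, which joint convexity guarantees lies in $\mathbf{X}$, collapse the sum to the single term $\nabla_{x^v}\theta_v(\mathbf{x}^*)^T(z^v-x^{*v})\geq 0$, and conclude by convexity of $\theta_v(\cdot,\mathbf{x}^{*-v})$ that $x^{*v}$ globally minimizes player $v$'s problem. Your closing remark correctly identifies why the implication is one-directional and where joint convexity is indispensable; the only detail worth stating explicitly is that $x^{*v}\in X_v(\mathbf{x}^{*-v})$ itself, which follows immediately from $\mathbf{x}^*\in\mathbf{X}$ and the definition of joint convexity.
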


\noindent This theorem states that we can utilize the VI with the combined $\mathbf{X}$ set instead of the QVI with $\mathbf{X}(\mathbf{x})$ to obtain a solution to the original generalized Nash problem \cite{rosen1965existence,harker1991generalized,facchinei2007generalized,facchinei2010generalized,gabriel2012complementarity,facchinei2007finite}.  For our transportation problem, the $\mathbf{X}$ set consists of the constraints of all of the players plus one copy of the joint constraints:
\begin{equation}\label{X_set_for_GNE}
    \mathbf{X} = \left\{\mathbf{x} : Dx_v = f_v\ \forall v, x_v \geq 0\ \forall v, \sum\limits_{v=1}^{N} x_v \leq \alpha \right\}
\end{equation}

We know there exists a solution to this VI based upon the following theorem from \citet{harker1990finite} who cite \cite{eaves1971basic,hartman1966some}:

\begin{theorem}[Theorem 3.1 of \cite{harker1990finite}
] Let $\textbf{X}$ be a nonempty, compact and convex subset of $\mathbb{R}^{\bar{n}}$ and let $\mathbf{F}$ be a continuous mapping from $\mathbf{X}$ into $\mathbb{R}^{\bar{n}}$.  Then there exists a solution to the problem VI$(\mathbf{X},\mathbf{F})$.
\end{theorem}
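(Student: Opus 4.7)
The plan is to reduce the existence of a VI solution to Brouwer's fixed point theorem via the Euclidean projection map. First I would introduce the projector $P_{\mathbf{X}}: \mathbb{R}^{\bar{n}} \to \mathbf{X}$ defined by
\begin{equation*}
P_{\mathbf{X}}(\mathbf{z}) = \arg\min_{\mathbf{y} \in \mathbf{X}} \|\mathbf{y} - \mathbf{z}\|_2,
\end{equation*}
which is single-valued and well-defined because $\mathbf{X}$ is nonempty, closed, and convex, and the squared Euclidean norm is strictly convex and coercive. A standard first-order optimality argument then shows that $P_{\mathbf{X}}$ is non-expansive, hence continuous. Using this, I would define the candidate self-map $T: \mathbf{X} \to \mathbf{X}$ by $T(\mathbf{x}) = P_{\mathbf{X}}(\mathbf{x} - \mathbf{F}(\mathbf{x}))$.

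Next I would verify the hypotheses of Brouwer's fixed point theorem for $T$. Continuity follows immediately since $\mathbf{F}$ is continuous on $\mathbf{X}$ by assumption, $P_{\mathbf{X}}$ is continuous on $\mathbb{R}^{\bar{n}}$, and composition preserves continuity; the domain $\mathbf{X}$ is nonempty, compact, and convex by hypothesis. Applying Brouwer then yields some $\mathbf{x}^* \in \mathbf{X}$ with $\mathbf{x}^* = P_{\mathbf{X}}(\mathbf{x}^* - \mathbf{F}(\mathbf{x}^*))$. To finish, I would invoke the variational characterization of the projection: $\mathbf{x}^* = P_{\mathbf{X}}(\mathbf{z})$ if and only if $\mathbf{x}^* \in \mathbf{X}$ and $(\mathbf{z} - \mathbf{x}^*)^T(\mathbf{y} - \mathbf{x}^*) \leq 0$ for every $\mathbf{y} \in \mathbf{X}$. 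Substituting $\mathbf{z} = \mathbf{x}^* - \mathbf{F}(\mathbf{x}^*)$ and simplifying gives
\begin{equation*}
-\mathbf{F}(\mathbf{x}^*)^T(\mathbf{y} - \mathbf{x}^*) \leq 0 \quad \forall\, \mathbf{y} \in \mathbf{X},
\end{equation*}
which is exactly the VI condition (\ref{VI_formulation}), so $\mathbf{x}^*$ solves VI$(\mathbf{X},\mathbf{F})$.

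There is no serious obstacle here; the result is classical and the machinery is off-the-shelf. The only delicate point, and the one I would take care to articulate cleanly, is the characterization of the projection onto a closed convex set, since the whole translation from ``fixed point of $T$'' to ``solution of the VI'' passes through that inequality. It is also worth flagging where each hypothesis is used: closedness and convexity of $\mathbf{X}$ are needed so that $P_{\mathbf{X}}$ is well-defined and satisfies the variational characterization, continuity of $\mathbf{F}$ is needed so that $T$ inherits continuity, and compactness of $\mathbf{X}$ is essential for Brouwer, since without it continuity of $T$ alone would not guarantee a fixed point.
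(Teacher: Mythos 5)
Your proof is correct and is the classical argument for this result. Note that the paper does not prove this theorem itself; it is quoted verbatim from \citet{harker1990finite}, who in turn attribute it to \cite{hartman1966some,eaves1971basic}, and the proof in those sources is essentially the one you give: form the map $T(\mathbf{x}) = P_{\mathbf{X}}(\mathbf{x} - \mathbf{F}(\mathbf{x}))$, apply Brouwer's fixed point theorem on the nonempty compact convex set $\mathbf{X}$, and translate the fixed-point identity into the VI via the variational characterization of the Euclidean projection. Your accounting of where each hypothesis is used is accurate (compactness of $\mathbf{X}$ in $\mathbb{R}^{\bar{n}}$ gives the closedness needed for $P_{\mathbf{X}}$ to be well-defined, and is essential for Brouwer), so there is nothing to correct.
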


Due to the fact that our $\mathbf{X}$ is nonempty, compact, and convex as a result of the linear constraints, the lower bound provided by the non-negativity constraints, and the upper bound provided by the capacity constraint and due to the fact that our function $\mathbf{F}$ is continuous, we know that a solution exists for our VI.  Furthermore, according to \citet{harker1991generalized} and \citet{gabriel2012complementarity}, we can obtain a unique solution to this VI if the set $\mathbf{X}$ is compact and convex and if $\mathbf{F}$ defined by (\ref{VI_formulation}) is strictly monotone, which is defined in the following definition:

\begin{definition}[Strict Monotonicity (\cite{harker1990finite,ortega2000iterative,gabriel2012complementarity})] 
A function $\mathbf{F}$ is strictly monotone if
\begin{equation}
(\mathbf{F}(\mathbf{x}) - \mathbf{F}(\mathbf{y}))^T(\mathbf{x}-\mathbf{y}) > 0,\ \forall \mathbf{x},\mathbf{y} \in \mathbf{X},\ \mathbf{x} \neq \mathbf{y}
\end{equation}
\end{definition}

We achieve both criteria with our traffic game, as defined and discussed in Section~\ref{sec:prelims}, because the equality constraints are linear, the non-joint inequality constraints are convex, the joint constraints are convex \citep{harker1991generalized,facchinei2010generalized}, the constraints create bounds on the feasible region, and the $\mathbf{F}$ defined in (\ref{specific_QVI_formulation}) composed of the gradients of the objective functions of the players is strictly monotone \citep{gabriel2012complementarity,harker1991generalized}.  Strict monotonicity of our $\mathbf{F}$ defined in (\ref{specific_QVI_formulation}) is implied by showing it is strongly monotone \citep{harker1990finite,ortega2000iterative,gabriel2012complementarity}, a property captured by the following definition:  
\begin{definition}[Strong Monotonicity (\cite{harker1990finite,ortega2000iterative,gabriel2012complementarity})]
A function $\mathbf{F}$ is strongly monotone if for some $\alpha > 0$
\begin{equation}
(\mathbf{F}(\mathbf{x}) - \mathbf{F}(\mathbf{y}))^T(\mathbf{x}-\mathbf{y}) \geq \alpha ||\mathbf{x} - \mathbf{y} ||_2^2,\ \forall \mathbf{x},\mathbf{y} \in \mathbf{X},\ \mathbf{x} \neq \mathbf{y}
\end{equation}
\end{definition}

We recognize that a similar proposition regarding players that all have the same cost functions and same origin-destination pairs appeared in \citet{orda1993competitive} as described by \citet{cominetti2006network}, but our proof is specialized to this problem.

\begin{lemma}\label{thm:strong-monotonicity}
The $\mathbf{F}$ defined by (\ref{specific_QVI_formulation}) and created by the gradients of the objective functions defined in (\ref{new_GNE}) is strongly monotone if we assume $\mathcal{C}_1 = \mathcal{C}_2 = ... = \mathcal{C}_N$.  
\end{lemma}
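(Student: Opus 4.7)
The plan is to write $\mathbf{F}$ explicitly, observe that it is affine in $\mathbf{x}$ under the hypothesis $\mathcal{C}_1 = \cdots = \mathcal{C}_N = \mathcal{C}$, and then reduce the monotonicity inequality to a quadratic form in $\mathbf{z} = \mathbf{x} - \mathbf{y}$ that is manifestly bounded below by a multiple of $\|\mathbf{z}\|_2^2$.

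First I would differentiate the objective in (\ref{new_GNE}). Expanding $x_i^T \mathcal{C}_i \sum_j x_j = x_i^T \mathcal{C}_i x_i + \sum_{j\neq i} x_i^T \mathcal{C}_i x_j$ and using that $\mathcal{C}_i$ is diagonal (hence symmetric), I get
\begin{equation*}
\nabla_{x_i}\theta_i(\mathbf{x}) \;=\; \mathcal{C}_i\bigl(2x_i + \sum_{j\neq i}x_j\bigr) + \bar{c}_i \;=\; \mathcal{C}_i\bigl(x_i + \textstyle\sum_{j=1}^N x_j\bigr) + \bar{c}_i.
\end{equation*}
Under the hypothesis $\mathcal{C}_i \equiv \mathcal{C}$ this is affine in $\mathbf{x}$, and the constant terms $\bar{c}_i$ as well as any $\bar{c}_i$--dependence drop out of $\mathbf{F}(\mathbf{x}) - \mathbf{F}(\mathbf{y})$. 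Writing $z_i := x_i - y_i$ and $S := \sum_{j=1}^N z_j$, the $i$th block of $\mathbf{F}(\mathbf{x})-\mathbf{F}(\mathbf{y})$ becomes $\mathcal{C}(z_i + S)$.

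Next I would compute the inner product and split it into two non-negative quadratic pieces:
\begin{align*}
(\mathbf{F}(\mathbf{x})-\mathbf{F}(\mathbf{y}))^T(\mathbf{x}-\mathbf{y})
&= \sum_{i=1}^N \bigl(\mathcal{C}(z_i+S)\bigr)^T z_i \\
&= \sum_{i=1}^N z_i^T \mathcal{C}\, z_i \;+\; S^T \mathcal{C} \sum_{i=1}^N z_i \\
&= \sum_{i=1}^N z_i^T \mathcal{C}\, z_i \;+\; S^T \mathcal{C}\, S.
\end{align*}
Because $\mathcal{C}$ is a positive diagonal matrix, both terms are non-negative; in particular $S^T\mathcal{C}\,S \ge 0$ can simply be discarded.

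For the final step I would let $c_{\min} := \min_\ell (\mathcal{C})_{\ell\ell} > 0$, which exists and is strictly positive since $\mathcal{C}$ is a \emph{positive} diagonal matrix. Then $z_i^T \mathcal{C}\, z_i \ge c_{\min}\|z_i\|_2^2$ for each $i$, so summing yields
\begin{equation*}
(\mathbf{F}(\mathbf{x})-\mathbf{F}(\mathbf{y}))^T(\mathbf{x}-\mathbf{y}) \;\ge\; c_{\min}\sum_{i=1}^N \|z_i\|_2^2 \;=\; c_{\min}\,\|\mathbf{x}-\mathbf{y}\|_2^2,
\end{equation*}
which is the definition of strong monotonicity with constant $\alpha = c_{\min}$. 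Honestly there is no serious obstacle here: the only thing to watch is that the cross terms between players produce exactly the non-negative piece $S^T\mathcal{C}\,S$ (not something sign-indefinite), and this is precisely what the shared-$\mathcal{C}$ hypothesis buys us. Without $\mathcal{C}_i \equiv \mathcal{C}$ the mixed contributions would take the asymmetric form $\sum_{i,j} z_i^T \mathcal{C}_i z_j$, which need not be positive semidefinite — this is the structural reason the hypothesis is invoked, and it is worth noting explicitly at the end of the argument.
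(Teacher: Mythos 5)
Your proof is correct, and it reaches the conclusion by a more elementary route than the paper. Both arguments ultimately rest on the same structural split of the quadratic form into a block-diagonal part and an ``all-$\mathcal{C}$'' part: your identity $(\mathbf{F}(\mathbf{x})-\mathbf{F}(\mathbf{y}))^T(\mathbf{x}-\mathbf{y}) = \sum_i z_i^T\mathcal{C}z_i + S^T\mathcal{C}S$ is exactly the decomposition $M = A + B$ that the paper writes down, with $\mathbf{z}^T A\mathbf{z} = \sum_i z_i^T\mathcal{C}z_i$ and $\mathbf{z}^T B\mathbf{z} = S^T\mathcal{C}S$. The difference is in how the two pieces are handled: the paper computes the full spectrum of $B$ (showing its eigenvalues are $0$ with multiplicity $n(N-1)$ together with the eigenvalues of $N\mathcal{C}$) and then invokes Weyl's inequality to conclude $\lambda_1(M) \geq \lambda_1(A) + \lambda_1(B) > 0$, whereas you simply observe that $S^T\mathcal{C}S \geq 0$ can be discarded and bound the remaining term below by $c_{\min}\|\mathbf{z}\|_2^2$. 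Your version avoids the eigenvector analysis and the appeal to Weyl's theorem entirely, and it has the added benefit of producing an explicit strong-monotonicity modulus $\alpha = \min_\ell(\mathcal{C})_{\ell\ell}$ rather than the less explicit $\lambda_1(M)$; the paper's version, in exchange, yields the sharper constant $\lambda_1(M) \geq c_{\min}$ and slots naturally into the eigenvalue-based positive-definiteness checks it performs elsewhere (e.g., the counterexample for Lemma \ref{lemma:not-all-C-equal} and the experiments in Appendix \ref{appendix:different_cost_eigenvalues}). Your closing remark about why the shared-$\mathcal{C}$ hypothesis is needed --- that with distinct $\mathcal{C}_i$ the cross terms $\sum_{i,j} z_i^T\mathcal{C}_i z_j$ become sign-indefinite --- correctly anticipates the content of Lemma \ref{lemma:not-all-C-equal}.
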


See Appendix \ref{appendix:strong-monotonicity-proof} for the proof.  Note though that, just because the VI form for the GNEP has one solution (due to the strong monotonicity property), this does not mean that the original generalized Nash problem without the assumption of equal multipliers for the joint constraints would not have other solutions (see \citet{facchinei2010generalized} for a simple example in Section 1).  
Furthermore, if the $\mathcal{C}_i$ matrices are not all equal, then we are not guaranteed to have a strongly monotone $\mathbf{F}$.

\begin{lemma}\label{lemma:not-all-C-equal}
The $\mathbf{F}$ defined by (\ref{specific_QVI_formulation}) and created by the gradients of the objective functions defined in (\ref{new_GNE}) is not in general strongly monotone if we assume the $\mathcal{C}_i$ are not all equal.
\end{lemma}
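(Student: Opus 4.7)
The approach is to refute the universal strong monotonicity of $\mathbf{F}$ by exhibiting an explicit two-player, two-parallel-arc instance in which $\mathbf{F}$ is demonstrably not strongly monotone. The strategy rests on the fact that, unlike the equal-$\mathcal{C}$ case of Lemma \ref{thm:strong-monotonicity}, the Jacobian of $\mathbf{F}$ becomes non-symmetric as soon as the $\mathcal{C}_i$'s differ, and the asymmetry between its off-diagonal blocks can be made large enough to produce a negative eigenvalue on the subspace of feasible flow perturbations.

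First I would record the affine structure of $\mathbf{F}$. From (\ref{new_GNE}), $\nabla_{x_i}\theta_i(\mathbf{x}) = \mathcal{C}_i\bigl(2x_i + \sum_{j\neq i} x_j\bigr) + \bar{c}_i$, so $\mathbf{F}(\mathbf{x}) = M\mathbf{x} + \mathbf{b}$ with $M$ the block matrix whose $(i,i)$ diagonal block is $2\mathcal{C}_i$ and whose $(i,j)$ off-diagonal block ($j\neq i$) is $\mathcal{C}_i$. Since $\mathbf{F}$ is affine, strong monotonicity reduces to the algebraic condition that there exists $\beta>0$ with $\mathbf{z}^T \bigl(\tfrac{M+M^T}{2}\bigr)\mathbf{z} \geq \beta\|\mathbf{z}\|_2^2$ for every $\mathbf{z} = \mathbf{x}-\mathbf{y}$ obtained from $\mathbf{x},\mathbf{y}\in\mathbf{X}$. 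It therefore suffices to produce a single feasible direction $\mathbf{z}$ on which this quadratic form is strictly negative, which rules out every positive $\beta$.

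Next I would specialize to $N=2$ players, each shipping one unit of flow from node $1$ to node $2$ through two parallel arcs, with $\mathcal{C}_1 = c_1 I_2$, $\mathcal{C}_2 = c_2 I_2$, and capacity vector $\alpha$ chosen large (each component equal to $2$, say) so that the symmetric point $\mathbf{x}^{\ast} = (\tfrac{1}{2},\tfrac{1}{2},\tfrac{1}{2},\tfrac{1}{2})$ is strictly feasible. For a flow-conserving perturbation of the form $\mathbf{z} = (a,-a,b,-b)$, a short calculation collapses $\mathbf{z}^T M \mathbf{z}$ to a positive multiple of the scalar quadratic $2c_1 a^2 + (c_1+c_2)ab + 2c_2 b^2$. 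This form is indefinite in $(a,b)$ precisely when its discriminant $(c_1+c_2)^2 - 16 c_1 c_2 = c_1^2 - 14c_1c_2 + c_2^2$ is positive, equivalently $c_1/c_2 + c_2/c_1 > 14$; this holds, for instance, at $c_1 = 1$, $c_2 = 100$. Selecting such $(c_1,c_2)$ and a direction $(a,b)$ that realizes a negative value of the quadratic yields a $\mathbf{z}$ with $\mathbf{z}^T M \mathbf{z} < 0$.

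To close the argument, I would set $\mathbf{x} = \mathbf{x}^{\ast}$ and $\mathbf{y} = \mathbf{x}^{\ast} - t\mathbf{z}$ for $t>0$ small enough that non-negativity and the capacity bound remain slack at $\mathbf{y}$, so $\mathbf{y}\in\mathbf{X}$. Then $(\mathbf{F}(\mathbf{x})-\mathbf{F}(\mathbf{y}))^T(\mathbf{x}-\mathbf{y}) = t^2\,\mathbf{z}^T M \mathbf{z} < 0$, whereas $\|\mathbf{x}-\mathbf{y}\|_2^2 = t^2\|\mathbf{z}\|_2^2 > 0$, which rules out the existence of any positive strong-monotonicity constant. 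The main obstacle, as I see it, is not the algebra — the discriminant test is elementary once the quadratic form is isolated — but the modeling step of choosing a feasible direction that both respects the equality, non-negativity, and capacity constraints defining $\mathbf{X}$ and exposes the asymmetry between $\mathcal{C}_1$ and $\mathcal{C}_2$. The minimal two-parallel-arc network accomplishes this by giving each player a one-dimensional family of flow-conserving perturbations, so that tuning the ratio $b/a$ can amplify the cross term $(c_1+c_2)ab$ relative to the diagonal terms $2c_1 a^2$ and $2c_2 b^2$.
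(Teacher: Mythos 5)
Your proposal is correct, and it proves the lemma by the same underlying mechanism as the paper --- exhibiting a direction on which the symmetric part of the (constant) Jacobian $M$ of the affine map $\mathbf{F}$ has a negative quadratic form --- but the execution is genuinely different and in two respects stronger. The paper's proof is a purely numerical counterexample: a four-player instance with $\mathcal{C}_i$ taken as scalar multiples $(1,\,500.1,\,600.7,\,700.8)$ of one random diagonal matrix, for which MATLAB reports that the smallest eigenvalue of $\tfrac{1}{2}(A+A^T)$ is $-7.73\times 10^{4}$; the conclusion is then read off from the equivalence between positive definiteness of $A$ and of its symmetric part. Your version instead isolates a closed-form criterion: on the two-player, two-parallel-arc network with $\mathcal{C}_i=c_iI_2$, the flow-conserving perturbation $\mathbf{z}=(a,-a,b,-b)$ gives $\mathbf{z}^TM\mathbf{z}=2\bigl(2c_1a^2+(c_1+c_2)ab+2c_2b^2\bigr)$, which is indefinite exactly when $c_1/c_2+c_2/c_1>14$, so a sufficiently lopsided cost ratio (e.g.\ $c_1=1$, $c_2=100$, $(a,b)=(10,-1)$) kills strong monotonicity. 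This buys an interpretable threshold on how different the costs must be, rather than a single machine-verified instance. You also close a gap the paper leaves implicit: strong monotonicity is only quantified over $\mathbf{x},\mathbf{y}\in\mathbf{X}$, and your choice of a strictly feasible base point $\mathbf{x}^{\ast}$ together with a conservation-respecting direction and a small step $t$ verifies that the bad pair actually lies in the feasible set, whereas the paper tests definiteness of $M$ over all of $\mathbb{R}^{nN}$ without checking that the offending eigenvector corresponds to an admissible difference of feasible flows. The only caveat is that the two arguments certify slightly different instances (your $N=2$ parallel-arc network versus the paper's $N=4$ four-arc example), but since the lemma only claims failure ``in general,'' either single counterexample suffices.
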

See Appendix \ref{appendix_lemma:not-all-C-equal} for the counterexample. Finally, with the VI formulation, we can then use a well-known theorem from \citet{facchinei2007finite} to convert the VI into a mixed complementarity problem.  \citet{facchinei2007finite} define the $\mathbf{X} = \{\mathbf{x}\in \mathbb{R}^{\hat{n}} : A\mathbf{x} \leq b, G\mathbf{x} = d\}$ such that $A \in \mathbb{R}^{\hat{m} \times \hat{n}}, G \in \mathbb{R}^{\hat{l}\times \hat{n}}, b\in \mathbb{R}^{\hat{m}}, d \in \mathbb{R}^{\hat{l}}$.  They have the following proposition:

\begin{proposition}[Proposition 1.2.1 from \cite{facchinei2007finite}]
Let $\mathbf{X}$ be defined as above. A vector $\mathbf{x}$ solves the VI$(\mathbf{X},\mathbf{F})$ if and only if these exists $\lambda \in \mathbb{R}^{\hat{m}}$ and $\mu \in \mathbb{R}^{\hat{l}}$ such that
\begin{subequations}
\begin{equation}
    0 = \mathbf{F}(\mathbf{x}) + G^T \mu + A^T \lambda
\end{equation}
\begin{equation}
    0 = d - G\mathbf{x}
\end{equation}
\begin{equation}
    0 \leq \lambda \ \bot \ b - A\mathbf{x} \geq 0
\end{equation}
\end{subequations}

\end{proposition}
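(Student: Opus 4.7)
My plan is to prove the two implications by linking the variational inequality over the polyhedral set $\mathbf{X}$ to the KKT system of a linearized auxiliary problem, together with a short direct computation for the converse.

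For the forward direction, suppose $\mathbf{x}$ solves VI$(\mathbf{X},\mathbf{F})$. The defining inequality $\mathbf{F}(\mathbf{x})^T(\mathbf{y}-\mathbf{x}) \geq 0$ for all $\mathbf{y} \in \mathbf{X}$ is equivalent to saying that $\mathbf{x}$ is optimal for the linear program
\begin{equation*}
\min_{\mathbf{y}} \; \mathbf{F}(\mathbf{x})^T \mathbf{y} \quad \text{subject to} \quad A\mathbf{y} \leq b, \; G\mathbf{y} = d,
\end{equation*}
where the gradient $\mathbf{F}(\mathbf{x})$ is treated as a fixed cost vector. Because $\mathbf{X}$ is polyhedral, the Abadie constraint qualification is satisfied automatically at every feasible point, so the KKT conditions for this LP must hold at $\mathbf{x}$. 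Writing them out produces exactly the three asserted conditions: stationarity $\mathbf{F}(\mathbf{x}) + G^T\mu + A^T\lambda = 0$, primal feasibility $G\mathbf{x} = d$, and the complementary slackness $0 \leq \lambda \perp b - A\mathbf{x} \geq 0$.

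For the reverse direction, assume the KKT system admits a solution $(\mathbf{x},\lambda,\mu)$. Fix any $\mathbf{y} \in \mathbf{X}$ and substitute the stationarity condition:
\begin{equation*}
\mathbf{F}(\mathbf{x})^T(\mathbf{y}-\mathbf{x}) = -(G^T\mu + A^T\lambda)^T(\mathbf{y}-\mathbf{x}) = -\mu^T G(\mathbf{y}-\mathbf{x}) - \lambda^T A(\mathbf{y}-\mathbf{x}).
\end{equation*}
Since $G\mathbf{y} = d = G\mathbf{x}$, the $\mu$ term vanishes. Complementarity gives $\lambda^T A\mathbf{x} = \lambda^T b$, so the expression collapses to $\lambda^T(b - A\mathbf{y})$, which is nonnegative because $\lambda \geq 0$ and $A\mathbf{y} \leq b$ by feasibility. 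Hence $\mathbf{x}$ solves VI$(\mathbf{X},\mathbf{F})$.

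The only nontrivial ingredient is the forward direction, where a constraint qualification is needed to extract the multipliers $(\lambda,\mu)$. The hard part — or rather the only point requiring care — is justifying that no additional regularity hypothesis is needed on $\mathbf{F}$ or on $\mathbf{X}$ beyond polyhedrality; this is handled by the fact that linear systems automatically satisfy Abadie's CQ, or equivalently by invoking LP strong duality for the linearized program. Once multipliers exist, both implications reduce to direct manipulation of the stationarity and complementarity identities, with no technical obstruction.
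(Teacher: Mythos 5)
The paper does not actually prove this proposition; it imports it verbatim as Proposition 1.2.1 of \cite{facchinei2007finite} and uses it as a black box, so there is no in-paper argument to compare yours against. On its own merits, your proof is correct and is the standard textbook argument: the forward direction correctly reduces the VI to the linear program $\min_{\mathbf{y}}\,\mathbf{F}(\mathbf{x})^T\mathbf{y}$ over $\{A\mathbf{y}\le b,\ G\mathbf{y}=d\}$, where polyhedrality (Abadie's CQ, or directly LP duality) guarantees the multipliers $(\lambda,\mu)$ exist without any further hypothesis on $\mathbf{F}$; the reverse direction is the clean substitution you give, using $G\mathbf{y}=G\mathbf{x}=d$ to kill the $\mu$ term and $\lambda^TA\mathbf{x}=\lambda^Tb$ plus $\lambda\ge 0$, $A\mathbf{y}\le b$ to conclude $\mathbf{F}(\mathbf{x})^T(\mathbf{y}-\mathbf{x})=\lambda^T(b-A\mathbf{y})\ge 0$. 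The only cosmetic quibble is that you call $\mathbf{F}(\mathbf{x})$ a ``gradient''; in the VI setting it is merely a fixed vector and need not be a gradient of anything, though this does not affect the argument.
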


\noindent This proposition states that, if we find a solution to the KKT system, we will solve the VI defined by $\mathbf{X}$ and $\mathbf{F}$, so long as the $\mathbf{X}$ is a set of linear inequality and linear equality constraints.  Therefore, since we have linear equality and linear inequality constraints in our $\mathbf{X}$, we can take the $\mathbf{X}$ defined in (\ref{X_set_for_GNE}) and write the following system for all players $i$ and for a given origin destination pair encoded in $f_i\ \forall i$: 

\begin{subequations}\label{mixed_complementarity_GNEP}
\begin{equation}\label{MCP_stationarity}
    \mathcal{C}_i \left(2x_i + \sum\limits_{j\neq i} x_j\right) + \bar{c}_i + D^T v_i - u_i + \bar{u} = 0\ \forall i = 1,...,N
\end{equation}
\begin{equation}\label{MCP_nonneg}
    0 \leq x_i \ \bot \ u_i \geq 0 \ \forall i = 1,...,N
\end{equation}
\begin{equation}
    0 \leq \alpha-\left(\sum\limits_j x_j\right) \ \bot \ \bar{u} \geq 0
\end{equation}
\begin{equation}
    D x_i = f_i,\ v_i \text{ free}\ \forall i = 1,...,N
\end{equation}
\end{subequations}

\noindent with dual variables of $u_i\ \forall i, \bar{u},$ and $v_i\ \forall i$.  We note that the system established in (\ref{mixed_complementarity_GNEP}) will be solved repeatedly for different right-hand side $f_i$ vectors to generate our simulation data.  This will be discussed further in Section \ref{sec:numerical-setup}.   

\subsection{General Residual Model \citep{ratliff2014social}}\label{sec:theory-residual}

\noindent Having explained that we will use a mixed complementarity form of our generalized Nash game (\ref{new_GNE}) to produce our simulation data by operating under the assumption of equal multipliers for the joint constraint \cite{rosen1965existence,harker1991generalized,facchinei2007generalized,facchinei2010generalized,gabriel2012complementarity}, we can explain the inverse optimization model we use to extract parameters from this simulation data.  We utilize the residual model showcased in \citet{ratliff2014social}, which is quite similar to the one presented in \citet{keshavarz2011imputing}, with the difference being that \citet{ratliff2014social} sum over players in a game.  As a note to the reader, we will explain this model utilizing the notation from \citet{ratliff2014social}'s more recent paper \citet{konstantakopoulos2017robust}. 

The model starts with the assumption that each player solves an optimization problem such as the following:
\begin{equation*}
    \min \{ f_i(x_i,x_{-i}) | x_i \in \mathcal{B}_i \}
\end{equation*}

\noindent with $f_i(x;\theta_i) = \langle \phi_i(x_i,x_{-i}), \zeta_i \rangle + \bar{f}_i(x)$ and $\mathcal{B}_i = \{x_i | h_{i,j}(x_i) \leq 0, j= 1,...,l_i,\ g_{i,q}(x) = 0,\ q = 1,...,ll_i \}$, in which $i$ represents the player, $l_i$ represents the number of inequality constraints for player $i$, $ll_i$ represents the number of equality constraints for player $i$, $\zeta_i$ represents the linear parameterization of the objective function for player $i$, and $\bar{f}_i$ is the ``known'' term in the objective function \citep{konstantakopoulos2017robust}.  In our game, each player also has one joint constraint inequality in the set $\mathcal{B}_i$ through which the players can influence each others' feasible regions.  Extending this model to accommodate this joint constraint will be covered in Section \ref{sec:theory-gnep}. 

If we take the KKT conditions of this problem \citep{boyd2004convex}, we obtain the following relevant ``residuals'' \citep{ratliff2014social,konstantakopoulos2017robust,keshavarz2011imputing}, in which $D_i$ represents the derivative with respect to $x_i$:

\begin{subequations}
\begin{equation}
    r_{s,i}^{(k)} (\zeta_i,\mu_i,\nu_i) = D_i f_i\left(x_i^{(k)},x_{-i}^{(k)}\right) + \sum\limits_{j=1}^{l_i} \mu_i^j D_i h_{i,j}\left(x_i^{(k)}\right) + \sum\limits_{q=1}^{ll_i} \nu_i^q D_i g_{i,q} \left( x_i^{(k)} \right)
\end{equation}
\begin{equation}
    r_{c,i}^{j,(k)}(\mu_i) = \mu_i^j h_{i,j}\left( x_i^{(k)} \right),\ j \in \{ 1,...,l_i \}
\end{equation}
\end{subequations}

\noindent with $k$ representing the ``kth observation'' in the game, $s$ representing the label for the stationarity residual, and $c$ representing the label for the complementarity residual \citep{konstantakopoulos2017robust,ratliff2014social}.  We then have $\mathbf{x}^{(k)}$ data for each instance $k$ of the game as a composite vector containing all the players' variable values, and we attempt to choose $\zeta, \mu, \nu$ such that we minimize the residuals presented above.  This leads to the optimization problem proposed by \citet{ratliff2014social} and \citet{konstantakopoulos2017robust} inspired by \citet{keshavarz2011imputing}:

\begin{subequations} \label{residual_model}
\begin{equation}
    \min\limits_{\mu,\zeta,\nu} \sum\limits_{i=1}^N \sum\limits_{k=1}^{\eta_i} \chi_i \left(r_{s,i}^{(k)}(\zeta,\mu,\nu), r_{c,i}^{(k)} (\mu) \right)
\end{equation}
\begin{equation}
    \zeta_i \in Z_i,\ \mu_i \geq 0,\ \forall i \in \{1,...,N \}
\end{equation}
\end{subequations}

\noindent with $N$ representing the total number of players and $\eta_i$ representing the number of times a player $i$ engages in the game which, in our application, means the number of origin-destination runs the player undertakes.  We assume in our implementation that $\eta_i$ is equal across all players, meaning that each player engages in each iteration of the game.  It is also important to note that there are separate parameterizations for each player $i$ in the form of $\zeta_i$.  Furthermore, these parameterizations each belong to their own convex set $Z_i$.  Finally, we specify that $\chi_i$ is a ``non-negative convex penalty function'' \citep{konstantakopoulos2017robust,ratliff2014social} which, according to \cite{boyd2004convex}, is applied to each set of residuals such that a sum of residuals is formed for each set of residuals. 

\subsection{Generalized Nash Connection to Residual Model}\label{sec:theory-gnep}

With background on the conversion of the generalized Nash problem to a mixed complementarity form and the use of the residual model for inverse optimization, we presently discuss extending the residual inverse optimization model to accommodate the joint constraint in our generalized Nash equilibrium problem.  Before we present our corollary regarding incorporating joint constraints into the inverse optimization model, we state a theorem from  \citet{facchinei2007generalized} in modified notation from \citet{facchinei2010generalized}.

This theorem requires the following specification of $\mathbf{X}$ for the purposes of the theorem as:
\begin{equation}\label{X_defined_with_h}
    \mathbf{X} = \{\mathbf{x} \in \mathbb{R}^{\bar{n}}: h_q(\mathbf{x}) \leq 0 \ q=1,...,m,\ g_j(\mathbf{x}) = 0\ j=1,...,p\}
\end{equation}

\noindent where $m$ represents in this case the total set of all inequality constraints across all players and $p$ represents the total set of equality constraints across all players. In the set $\mathbf{X}$, we incorporate all of the constraints for all of the players, including one copy of the joint constraints.  We could also write the subset of constraints for each player $v$ as:
\begin{equation}
    X_v(\mathbf{x}^{-v}) = \{x^v : h_q(x^v,\mathbf{x}^{-v}) \leq 0\ q=1,...,m,\ g_j(x,\mathbf{x}^{-v}) = 0\ j=1,...,p  \}
\end{equation}


\noindent For each of these subsets $X_v(\mathbf{x}^{-v})$, all of the constraints for all of the players still exist, but we are differentiating between player $v$'s variables and the other players' variables more explicitly; each subset $X_v(\mathbf{x}^{-v})$ also has its own copy of the joint constraints.  The set $\mathbf{X}$ and the subsets $X_v(\mathbf{x}^{-v})\ \forall v$ lead to two statements of the KKT conditions.  First for the $X_v(\mathbf{x}^{-v})$ subsets, according to \cite{facchinei2007generalized,facchinei2007finite,facchinei2010generalized}, if we assume that a constraint qualification holds for each player, a solution for each player $v$ will correspond with a KKT point that satisfies the following KKT conditions for each individual player $v$:
\begin{subequations} \label{KKT_GNEP}
\begin{equation} \label{KKT_GNEP_stat}
    \nabla_{x^v} \theta_v(x^v,\textbf{x}^{-v}) + \sum\limits_{q=1}^m \lambda_q^v \nabla_{x^v} h_q(x^v,\textbf{x}^{-v}) + \sum\limits_{j=1}^p \nu_j^v \nabla_{x^v} g_j(x^v,\textbf{x}^{-v}) = 0
\end{equation}
\begin{equation} \label{KKT_GNEP_comp1}
    0 \leq \lambda_q^v \ \bot \ h_q(x^v, \textbf{x}^{-v}) \leq 0,\ \forall q
\end{equation}
\begin{equation}\label{KKT_GNEP_comp2}
    g_j(x^v, \textbf{x}^{-v}) = 0,\ \nu_j^v \text{ free},\ \forall j
\end{equation}
\end{subequations}

\noindent Second for the $\mathbf{X}$ set, we know from \cite{facchinei2007finite,bazaraa2013nonlinear,facchinei2007generalized} that, if a constraint qualification is satisfied, then the KKT conditions are satisfied by a solution to the VI (assuming the presence of relevant multipliers) with $\mathbf{F}$ defined as (\ref{specific_QVI_formulation}) and $\mathbf{X}$ defined as (\ref{X_defined_with_h}).  We also know from \cite{facchinei2007finite} that, if $h_q$ are convex and $g_j$ are affine, then a solution to the KKT conditions is a solution to the VI with $\mathbf{F}$ defined as (\ref{specific_QVI_formulation}) and $\mathbf{X}$ defined as (\ref{X_defined_with_h}).  Therefore, we have the resulting KKT conditions from the VI as: 
\begin{subequations} \label{KKT_VI}
\begin{equation} \label{KKT_VI_stat}
    \mathbf{F}(\textbf{x}) + \sum\limits_{q=1}^m \lambda_q \nabla_{\textbf{x}} h_q(\textbf{x}) + \sum\limits_{j=1}^p \nu_j \nabla_{\mathbf{x}} g_j(\mathbf{x}) = 0
\end{equation}
\begin{equation}\label{KKT_VI_comp1}
    0 \leq \lambda_q \ \bot \ h_q(\textbf{x}) \leq 0, \forall q
\end{equation}
\begin{equation}\label{KKT_VI_comp2}
    g_j(\mathbf{x}) = 0,\ \nu_j \text{ free},\ \forall j
\end{equation}
\end{subequations}

\noindent We can now convey Theorem 4.8 from \cite{facchinei2010generalized} who cite \cite{facchinei2007generalized,harker1991generalized} which states that we can move between the two forms of the KKT conditions (\ref{KKT_GNEP}) and (\ref{KKT_VI}) so long as we assume the multipliers $\lambda^v, \nu^{v}$ for (\ref{KKT_GNEP}) are equal across all players $v$. This statement of the theorem is slightly different from either \cite{facchinei2007generalized}'s statement or \cite{facchinei2010generalized}'s statement because we have added equality constraints and their associated multipliers into our formulation of these KKT conditions.  

\begin{theorem}[Theorem 4.8 \citep{facchinei2010generalized}]\label{thm:gnep-facchinei} 
Consider the jointly convex GNEP with $h_q\ \forall q,g_j\ \forall j,\theta_v$ being $C^1$.  Then the following statements hold:
\begin{itemize}
    \item Let $\bar{\mathbf{x}}$ be a solution of the VI$(\mathbf{X},\mathbf{F})$ (with $\mathbf{X}$ defined as (\ref{X_defined_with_h}), $\mathbf{F}$ defined as (\ref{specific_QVI_formulation}), and VI defined as (\ref{VI_formulation})) such that the KKT conditions (\ref{KKT_VI}) hold with some multipliers $\bar{\lambda}$ and $\bar{\nu}$.  Then $\bar{\mathbf{x}}$ is a solution of the GNEP, and the corresponding KKT conditions (\ref{KKT_GNEP}) are satisfied with $\lambda^1 = ... = \lambda^N = \bar{\lambda}$ and with $\nu^1 = \nu^2 = ... = \nu^N = \bar{\nu}$.
    
    \item Conversely, assume that $\bar{\mathbf{x}}$ is a solution of the GNEP such that the KKT conditions (\ref{KKT_GNEP}) are satisfied with $\lambda^1 = ... = \lambda^N$ and with $\nu^1 = \nu^2 = ... = \nu^N$.  Then $(\bar{\mathbf{x}},\bar{\lambda}, \bar{\nu})$ with $\bar{\lambda} = \lambda^1$ and with $\bar{\nu} = \nu_1$ is a KKT point of VI$(\mathbf{X},\mathbf{F})$, and $\bar{\mathbf{x}}$ itself is a solution of VI$(\mathbf{X},\mathbf{F})$.
\end{itemize}

\end{theorem}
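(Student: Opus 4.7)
The plan is to exploit the structural identity between the two KKT systems once we recall that $\mathbf{F}(\mathbf{x})=(\nabla_{x^v}\theta_v(\mathbf{x}))_{v=1}^N$ is literally the block-stacking of the per-player objective gradients, and that under joint convexity the constraint functions $h_q$ and $g_j$ are the same in (\ref{KKT_GNEP}) and (\ref{KKT_VI})---only the variable with respect to which we differentiate changes. The argument in both directions is essentially a bookkeeping exercise: split (or stack) one vector equation into (or out of) $N$ player-indexed blocks, and match multipliers. The Convexity Assumption on the $\theta_v$, together with the constraint qualifications invoked in \cite{facchinei2007generalized,facchinei2010generalized}, guarantees that, on either side, the KKT conditions are equivalent to optimality.

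For the forward direction, I would start from a VI solution $\bar{\mathbf{x}}$ with multipliers $\bar{\lambda},\bar{\nu}$ satisfying (\ref{KKT_VI}) and read (\ref{KKT_VI_stat}) block by block. The $v$th block of $\mathbf{F}(\bar{\mathbf{x}})$ is $\nabla_{x^v}\theta_v(\bar{\mathbf{x}})$, and the $v$th block of $\nabla_{\mathbf{x}}h_q(\bar{\mathbf{x}})$ is $\nabla_{x^v}h_q(\bar{\mathbf{x}})$, and analogously for $g_j$. Setting $\lambda^v := \bar{\lambda}$ and $\nu^v := \bar{\nu}$ for every $v$ therefore turns the $v$th block of (\ref{KKT_VI_stat}) into exactly (\ref{KKT_GNEP_stat}). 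The primal feasibility and complementarity lines (\ref{KKT_VI_comp1})--(\ref{KKT_VI_comp2}) do not depend on the player index and carry over verbatim to (\ref{KKT_GNEP_comp1})--(\ref{KKT_GNEP_comp2}). Convexity of each player's subproblem then promotes the per-player KKT point to a global minimizer of $\theta_v(\cdot,\bar{\mathbf{x}}^{-v})$ on $X_v(\bar{\mathbf{x}}^{-v})$, which is the definition of $\bar{\mathbf{x}}$ being a generalized Nash equilibrium.

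For the converse direction, I would take a GNEP solution $\bar{\mathbf{x}}$ whose per-player KKT systems (\ref{KKT_GNEP}) share a common multiplier profile $\lambda^1=\cdots=\lambda^N=:\bar{\lambda}$ and $\nu^1=\cdots=\nu^N=:\bar{\nu}$, and stack the $N$ block stationarity equations. Since $(\nabla_{x^v}\theta_v(\bar{\mathbf{x}}))_v=\mathbf{F}(\bar{\mathbf{x}})$ and the stacked partial gradients of $h_q$ and $g_j$ reassemble to $\nabla_{\mathbf{x}}h_q(\bar{\mathbf{x}})$ and $\nabla_{\mathbf{x}}g_j(\bar{\mathbf{x}})$, the resulting vector equation is precisely (\ref{KKT_VI_stat}) with the common multipliers in the roles of $\bar{\lambda}$ and $\bar{\nu}$. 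The player-independent lines (\ref{KKT_GNEP_comp1})--(\ref{KKT_GNEP_comp2}) become (\ref{KKT_VI_comp1})--(\ref{KKT_VI_comp2}) once the shared names are substituted. Hence $(\bar{\mathbf{x}},\bar{\lambda},\bar{\nu})$ is a KKT triple for VI$(\mathbf{X},\mathbf{F})$, and Proposition 1.2.1 of \citet{facchinei2007finite}---applicable because the inequality constraints $h_q$ are convex and the equality constraints $g_j$ are affine---yields that $\bar{\mathbf{x}}$ solves the VI.

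The main obstacle is conceptual rather than computational: one must be careful that the constraint qualification and convexity hypotheses sit at the right places for the KKT-to-optimality equivalences on both sides, and one must observe that the equal-multipliers hypothesis is doing genuine work in the converse direction---nothing in the disaggregated per-player KKT systems forces the joint-constraint duals to agree across players, so the variational equilibria selected by VI$(\mathbf{X},\mathbf{F})$ are, in general, only a subset of the GNEP solution set. Once this point is flagged, the rest of the proof is the blockwise matching described above and involves no nontrivial calculation.
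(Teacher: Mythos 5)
Your proposal is correct and follows essentially the same route as the paper's proof: a blockwise matching of the two KKT systems under the equal-multipliers identification, with convexity of each player's subproblem upgrading the per-player KKT points to optimality in the forward direction, and the KKT-implies-VI-solution result of \citet{facchinei2007finite} (under convex $h_q$ and affine $g_j$) closing the converse. Your added remark that variational equilibria form only a subset of the GNEP solution set is a point the paper also makes, just outside the proof itself.
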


\begin{proof}
As a brief proof of this theorem (which is based off of, and is a relatively direct extension of, a related proof by~\citet{facchinei2007generalized}), for the first statement, we notice a correspondence between (\ref{KKT_GNEP_stat}) concatenated for all players $v$ and (\ref{KKT_VI_stat}) as long as the multipliers are equal across players and are equal to the $\bar{\lambda}, \bar{\nu}$ multipliers.  This correspondence continues for (\ref{KKT_GNEP_comp1}) \& (\ref{KKT_VI_comp1}) and (\ref{KKT_GNEP_comp2}) \& (\ref{KKT_VI_comp2}) so long as the multipliers are equal.  Therefore, the $(\bar{\mathbf{x}}, \bar{\lambda}, \bar{\nu})$ from (\ref{KKT_VI}) is a KKT point for the conditions (\ref{KKT_GNEP}) for each player.  According to \cite{facchinei2007generalized}, as long as the conditions (\ref{KKT_GNEP}) are sufficient for optimality for each player, which is true if we assume $\mathbf{F}$ is convex, $h_q$ are convex, and $g_j$ are affine \cite{boyd2004convex}, $\bar{\mathbf{x}}$ with multipliers $\bar{\lambda}, \bar{\nu}$ is a solution for each player's optimization problem, thus making it a  GNEP solution.

For the second statement, we point to the fact that, now that we are given that the multipliers are equal across all of the players, then there is a correspondence between (\ref{KKT_GNEP}) (concatenated for all players $v$ for the stationarity constraint) and (\ref{KKT_VI}) as long as $\bar{\lambda} = \lambda_1 = ... = \lambda_N$ and $\bar{\nu} = \nu_1 = ... = \nu_N$.  Consequently, we have a KKT point for the (\ref{KKT_VI}) conditions that comes from the given solution to (\ref{KKT_GNEP}).  We know from \cite{facchinei2007finite} that if there is a KKT point for the conditions (\ref{KKT_VI}) and we assume the constraints $h_q$ are convex and $g_j$ are affine, then the $\bar{\mathbf{x}}$ associated with the KKT point is a solution for the VI. 
\end{proof}

\noindent Our technique, as discussed in Section~\ref{sec:prelims}, can be looked at as a corollary to this theorem, where we state that we can use the stacked KKT conditions with one copy of the joint/shared constraints to parameterize the cost functions of the players.  

\begin{corollary}\label{thm:gnep-allen}
With the assumption of equal multipliers for the joint constraint(s) in a GNEP, the stationarity conditions and complementarity conditions of the VI KKT conditions (\ref{KKT_VI}) may be used to form a residual model of the form seen in \cite{keshavarz2011imputing,ratliff2014social,konstantakopoulos2017robust}, with $\chi$ as a ``non-negative convex penality function'' as in those papers and in \cite{boyd2004convex} and with $k$ representing the multiple data points $\mathbf{x}^k$: \begin{subequations}\label{corollary:model}
    \begin{equation}\label{corollary:objective_function}
    \min\limits_{\zeta_i,\lambda_q,\nu_j} \sum\limits_k \chi\left(\mathbf{F}(\textbf{x}^k) + \sum\limits_{q=1}^m \lambda_q \nabla_{\textbf{x}} h_q(\textbf{x}^k) + \sum\limits_{j=1}^p \nu_j \nabla_{\mathbf{x}} g_j(\mathbf{x}^k)\right) + \sum\limits_{k,q} \chi\left(\lambda_q h_q(\textbf{x}^k)\right)
    \end{equation}
    \begin{equation}
        \lambda_q \geq 0\ \forall q,\ \zeta_i \in Z_i\ \forall i
    \end{equation}
    \end{subequations}
\end{corollary}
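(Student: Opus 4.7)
The plan is to invoke Theorem \ref{thm:gnep-facchinei} to replace the per-player stacked KKT system (\ref{KKT_GNEP}) with the single VI KKT system (\ref{KKT_VI}) under the equal-multiplier assumption, and then observe that (\ref{KKT_VI}) already has exactly the structural ingredients --- a stationarity equation linear in the unknown parameters $\zeta$, $\lambda$, $\nu$, and complementarity relations $\lambda_q h_q(\mathbf{x}) = 0$ --- that the generic residual template (\ref{residual_model}) of \citet{ratliff2014social}, \citet{keshavarz2011imputing}, and \citet{konstantakopoulos2017robust} was designed to absorb. In other words, the corollary is essentially the statement that the VI KKT residuals slot directly into the residual-model skeleton, with each joint constraint handled once (rather than $N$ times) thanks to the shared multiplier.

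First I would, for each observed $\mathbf{x}^k$, apply the second bullet of Theorem \ref{thm:gnep-facchinei}: because the joint-constraint multipliers are assumed equal across players, there exist $\bar\lambda^k, \bar\nu^k$ such that $(\mathbf{x}^k, \bar\lambda^k, \bar\nu^k)$ is a KKT point of VI$(\mathbf{X},\mathbf{F})$. I would then define the stationarity residual
\[
r_s^{(k)}(\zeta,\lambda,\nu) = \mathbf{F}(\mathbf{x}^k) + \sum_{q=1}^m \lambda_q \nabla_{\mathbf{x}} h_q(\mathbf{x}^k) + \sum_{j=1}^p \nu_j \nabla_{\mathbf{x}} g_j(\mathbf{x}^k)
\]
and the inequality-complementarity residuals $r_c^{q,(k)}(\lambda) = \lambda_q h_q(\mathbf{x}^k)$ for $q=1,\dots,m$. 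Since $\mathbf{F} = (\nabla_{x^v}\theta_v)_{v=1}^N$ is linear in the cost parameters $\zeta$ for the class of objectives considered (in particular the transportation model (\ref{new_GNE})), these residuals are linear in the decision variables of the residual model, preserving the tractability property used for Lemma \ref{complexity_lemma}.

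Next I would feed $r_s^{(k)}$ and each $r_c^{q,(k)}$ through a non-negative convex penalty $\chi$ and sum over the observation index $k$, obtaining the objective (\ref{corollary:objective_function}); the sign constraints $\lambda_q \ge 0$ are inherited from (\ref{KKT_VI_comp1}), the equality-constraint multipliers $\nu_j$ remain free per (\ref{KKT_VI_comp2}), and $\zeta_i \in Z_i$ is carried over from (\ref{residual_model}). This reduction mirrors the per-player construction of \citet{ratliff2014social} with a single change: the per-player sum over stationarity residuals is replaced by the VI stationarity residual, whose single $\lambda_q$ per joint constraint encodes the equal-multiplier requirement for free.

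The one non-routine point --- and the closest thing to a real obstacle --- is justifying why the VI KKT residuals are the correct target when the observational data come from a GNEP whose per-player KKT system (\ref{KKT_GNEP}) a priori allows different joint-constraint multipliers across players. Theorem \ref{thm:gnep-facchinei} is what resolves this: the equal-multiplier assumption (which is also the assumption used to generate the simulation data in Section \ref{sec:theory-QVI}) puts GNEP KKT points with shared multipliers into bijection with VI KKT points, so at the true parameter $\zeta^\star$ all VI residuals vanish simultaneously, making (\ref{corollary:model}) a well-posed inverse optimization problem with optimal value zero. Everything else is bookkeeping: verify that $\chi \geq 0$ makes the summed objective a lower bound on zero, and conclude that the true $(\zeta^\star, \bar\lambda, \bar\nu)$ is an optimal solution, which is exactly the conclusion of the corollary.
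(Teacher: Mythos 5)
Your proposal is correct and follows essentially the same route as the paper's own proof: both hinge on Theorem \ref{thm:gnep-facchinei} to justify replacing the per-player KKT system (\ref{KKT_GNEP}) with the single VI KKT system (\ref{KKT_VI}) under the equal-multiplier assumption, and then slot the VI stationarity and complementarity conditions into the residual template (\ref{residual_model}). Your additional observations --- linearity of the residuals in the unknown parameters and the fact that the true parameterization achieves objective value zero --- are useful elaborations the paper leaves implicit, but they do not change the argument.
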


\begin{proof}
We know from Theorem \ref{thm:gnep-facchinei} that there is a correspondence between the solutions to (\ref{KKT_GNEP}) for all players $\nu$ and the solution to (\ref{KKT_VI}) so long as the multipliers are equal across these two sets of KKT conditions.  Therefore, we can take the stationarity condition and the complementarity conditions of (\ref{KKT_VI}) as the residuals for the GNEP problem and form the same kind of residual problem as (\ref{residual_model}). 
\end{proof}



\noindent As a result of corollary \ref{thm:gnep-allen}, we can find parameterizations for players' cost functions in generalized Nash games with joint constraints.  Specifically, in our transportation problem, we can find $\mathcal{C}_i$ and $\bar{c}_i$ for players $i \in [N]$.  Next, in Section 4, we explain our experimental framework, walking the reader through our simulation set-up and the specifications for our residual model for the transportation problem.

\section{Experimental Framework}\label{sec:numerical-setup}

To test and explore our inverse optimization framework for generalized Nash Equilibrium games, we utilize various grid networks, which can be expressed as a certain number of nodes vertically and a certain number of nodes horizontally (2 nodes by 2 nodes, 3 nodes by 3 nodes, 4 nodes by 4 nodes, and 5 nodes by 5 nodes), and the Sioux Falls network \cite{sioux_falls_data,leblanc1975efficient} for the traffic problem presented in the introduction; examples of these networks are visualized in Figures \ref{fig:network-grid} and \ref{fig:network-sioux-falls}, respectively.

\begin{figure}[h!]
\begin{subfigure}{0.4\textwidth}
\centering
\includegraphics[height=0.2\textheight,keepaspectratio]{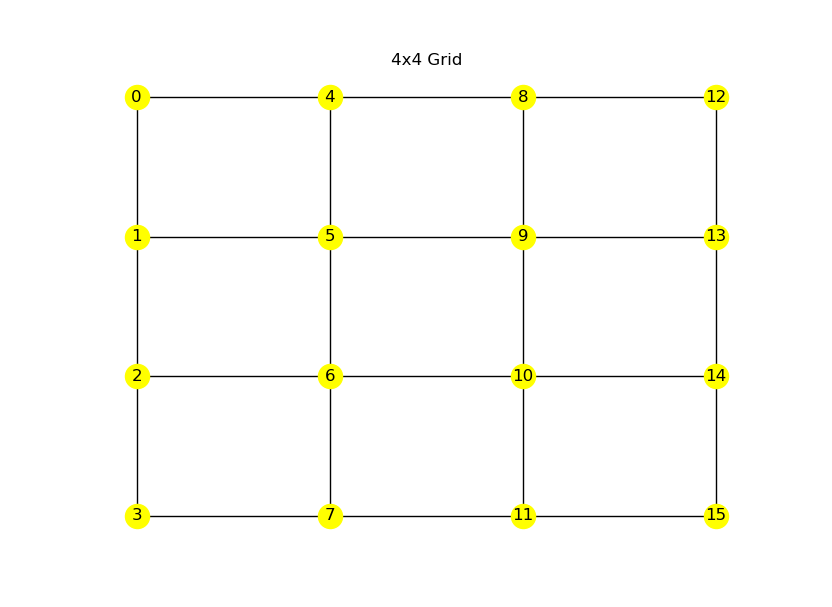}
\caption{4 nodes x 4 nodes Grid}\label{fig:network-grid}
\end{subfigure}
\begin{subfigure}{0.5\textwidth}
\centering
\includegraphics[height=0.3\textheight,keepaspectratio]{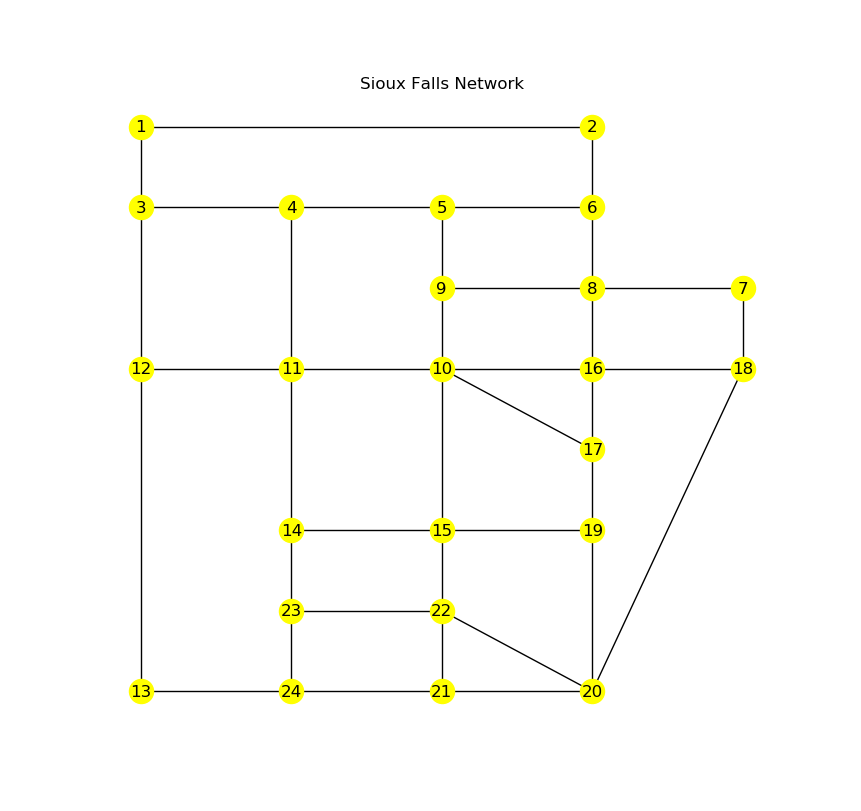}
\caption{Sioux Falls Network \cite{sioux_falls_data,leblanc1975efficient}}\label{fig:network-sioux-falls}
\end{subfigure}
\caption{Networks utilized for testing inverse optimization framework}
\end{figure}

To generate the solutions to use in our inverse optimization framework, we utilize a slightly varied form of the KKT conditions listed above in (\ref{mixed_complementarity_GNEP})  
in which we substitute the stationarity constraint (\ref{MCP_stationarity}) into the nonnegative complementarity constraints (\ref{MCP_nonneg}) thus producing (\ref{new_MCP_nonneg}) \citep{boyd2004convex,gabriel2012complementarity,facchinei2010generalized}:

\begin{subequations}\label{new_MCP}
\begin{equation}\label{new_MCP_nonneg}
    0 \leq x_i \ \bot \ \mathcal{C}_i \left(2x_i + \sum\limits_{j\neq i} x_j\right) + \bar{c}_i + D^T v_i + \bar{u} \geq 0 \ \forall i = 1,...,N
\end{equation}
\begin{equation}
    0 \leq \alpha-\left(\sum\limits_j x_j\right) \ \bot \ \bar{u} \geq 0
\end{equation}
\begin{equation}
    D x_i = f_i,\ v_i \text{ free}\ \forall i = 1,...,N
\end{equation}
\end{subequations}

\noindent We utilize the implementation of PATH \citep{dirkse1995path,ferris2000complementarity} from the software GAMS \cite{GAMS_software_33_2,GAMS_software_34_1} for this problem.

As stated in the introduction, our goal is to solve the inverse optimization residual model for points generated from the GNE problem, specifically with the purpose of finding the parameters $\mathcal{C}_i$ and $\bar{c}_i$ for each player $i$.  In some of the experiments, these $\mathcal{C}_i$ and $\bar{c}_i$ will be assumed the same across all players and, in others, they will be assumed different across all players.  As a reminder of the residual model original presented in Section \ref{sec:prelims}, we present it again here \cite{keshavarz2011imputing,ratliff2014social,konstantakopoulos2017robust}:  

\begin{subequations}\label{empirical_residual_model_2}
\begin{small}
\begin{equation*}
        \min\limits_{\mathcal{C}_i,\bar{c}_i,v_i^k,u_i^k,\bar{u}^k} \sum\limits_{k} \left( \overbrace{\sum\limits_{i=1}^N ||\mathcal{C}_i \left(2 x_i^k + \sum\limits_{j\neq i}x_{j}^k\right) + \bar{c}_i + D^T v_i^k - u_i^k + \bar{u}^k||_1}^{\text{stationarity}} + \overbrace{\sum\limits_{i=1}^{N} ||(x_i^k)^T u_i^k ||_1}^{\text{complementarity \#1}} + \overbrace{||(\alpha-\sum\limits_{j}x_{j}^k)^T(\bar{u}^k)||_1}^{\text{complementarity \#2}} \right)
\end{equation*}
\end{small}
\begin{equation}
    L_1 \leq diag(\mathcal{C}_i) \leq U_1 \ \forall i
\end{equation}
\begin{equation}
    L_2 \leq \bar{c}_i \leq U_2 \ \forall i
\end{equation}
\begin{equation}
    \bar{u}^k \geq 0\ \forall k\ u_i^k \geq 0\ \forall i,k
\end{equation}
\end{subequations}


\noindent  In this mathematical program, we minimize the error across the relevant KKT conditions for all players and all data such that the diagonal of the $\mathcal{C}_i$ matrices and the $\bar{c}_i$ vectors remain between the two bounds.  The $L_1,L_2 \in \mathbb{R}^n$ and $U_1,U_2 \in \mathbb{R}^n$ bounds are determined by the experimentation bounds for those variables; for example, if we randomly choose the diagonal of $\mathcal{C}_i$ to uniformly come from between 1 and 5, then the upper and lower bounds for the diagonal will be 1 and 5.  We use the 1-norm to preserve the convexity of the problem and to create linear terms as discussed in Section \ref{sec:prelims}.

For our experiments, we follow a general framework in which various elements can be modified.  The general framework is as follows:


\begin{algorithm}[H]
\textbf{Input:} $D$ conservation-of-flow matrix, $n$ nodes, $m$ arcs, $\alpha$ capacity level, $N$ number of players, bounds $L_1,L_2,U_1,U_2$ \\
Exhaustively generate $\mathcal{P}$, the set of all unique origin-destination pairs for a given network \\
\For{each pair in $\mathcal{P}$}{
Solve the mixed complementarity problem (\ref{new_MCP}) form of GNEP (\ref{new_GNE}) for RHS $f_i$ $\forall i$ constructed from the current pair, with one unit of flow for each player  \\
Store the $N$ player flow vectors $x_i$ $i=1,...,N$ (which are the data points) \\
}
Input the sets of data points into the IO mathematical program (\ref{empirical_residual_model_2}) to obtain estimates for $\mathcal{C}_i$ and $\bar{c}_i\ \forall i$
\caption{Basic Experimental Framework}
\end{algorithm}

\vspace{1em}


\noindent  We again note that, in some experiments, we assume $\mathcal{C}_i$ and $\bar{c}_i$ are the same across all $i$ and, in others, we assume $\mathcal{C}_i$ and $\bar{c}_i$ are different across all $i$.  We also utilize all $\mathcal{P}$ origin-destination pairs for a given network because we want to gain a detailed picture of flow on that network.  However, we recognize that there are many more configurations of possible flow from which we could have drawn which may lead to further study, including configurations that would involve the players starting and ending at different points.





\section{Experimental Results}\label{sec:experiments}

In this section, we present the numerical results of experiments with the proposed inverse optimization framework to parameterize the GNEP (\ref{new_GNE}) using the two types of networks showcased in Section~\ref{sec:numerical-setup}.  To implement the experimental setup described in Section \ref{sec:numerical-setup}, we used a data generation and optimization pipeline including state of the art solvers from PATH \cite{dirkse1995path,ferris2000complementarity} in GAMS and Gurobi \cite{gurobi_citation}.  This included randomly generating from a uniform distribution the original costs for the $\mathcal{C}_i$ and $\bar{c}_i$ parameters in the simulation model outlined in (\ref{new_MCP}).  See Appendix \ref{appendix:experimental_details} for more information about the experimental setup. 

The boxplots\footnote{For all of the box plots, the ``whiskers'' are placed at quartile 1 - 1.5 (quartile 3 - quartile 1) and at quartile 3 + 1.5 (quartile 3 - quartile 1), and the ``dots'' are outliers~\citep{matplotlib_boxplot}.  
} in this section denote two different measures of error. First, there is the value of the objective function of the residual model, denoting how closely the chosen parameterizations cause the input data to satisfy the KKT conditions \cite{keshavarz2011imputing,ratliff2014social,konstantakopoulos2017robust}. 
Second, there is the total flow error metric, which measures the difference between the flows under both the original parameterization and the IO residual model parameterization using the same set of OD pairs $\mathcal{P}$.  The Frobenius-norm metric is utilized, which acts as a vector 2-norm for matrices.  This is useful compared to other matrix norms because it calculates a total difference between the two matrices, not a maximal difference along rows or columns or an abstract eigenvalue metric as in the case of the more traditional matrix norms \cite{matrix_norm}.  This leads to the following metric.
\begin{definition}[Flow error]\label{def:flow-error}
The \textbf{flow error} is calculated by first taking the squared difference between the flow under the original parameterization and the flow under the IO parameterization for all origin-destination pairings, all players, and all arcs.  Then, a sum is taken across these squared differences and, finally, the square root of this summation is taken.  This error represents the Frobenius norm between the two sets of flows.
\end{definition}
This error is also normalized by the total number of arc flows, which is calculated by multiplying the number of origin-destination pairings, the number of players, and the number of arcs.  It forms the following normalized metric.
\begin{definition}[Normalized flow error]\label{def:normalized-flow-error}
The \textbf{normalized flow error} is calculated by dividing the flow error by a factor created by multiplying the number of origin-destination pairings, the number of players, and the number of arcs.  It represents a per unit level of error. 
\end{definition}

Note that data are collected regarding the differences between the original parameterizations and the IO parameterizations, and it is found that there are differences between the two.  However, the important metrics are the objective function value and the flow error metrics because the first measures how well the IO parameterization fits the model and the model data and because the second showcases whether or not the IO parameterization leads to the same flow patterns observed for the OD pairs under the original parameterization. Note also that the objective function values for the experiments can be found in Appendix \ref{appendix:obj_func_for_experiments} and timing information for the experiments can be found in Appendix \ref{appendix:timing_for_experiments}.

\subsection{Broader Observations about the Results}

Overall, the results from the four experimental groups (with two in the same cost randomized costs category and two in the different cost randomized costs category) are encouraging. The maximum objective function values for all the experiments across the groups are on the order of 1e-6, and the maximum flow errors (see Definition~\ref{def:flow-error}) for all of the experiments across the groups are on the order of 1e-6 or 1e-7.  Therefore, these error metrics indicate that viable parameterizations are being recovered for the OD pair sets under which we are testing the framework.  Indeed, the fact that these errors do not greatly differ between same and different costs is encouraging because it indicates that the approach can work to recover people's different perceptions of road networks \cite{bertsimas2015data,thai2015multi,thai2018imputing,xu2018network,chow2012inverse,chow2015activity}. However, for the different costs case, it should be noted that all of the $\mathbf{F}$ functions as defined by (\ref{specific_QVI_formulation}) across the number of arcs present in the grids (2x2-5x5) and in Sioux Falls as well as the number of players, with the exception of one trial in the 5x5 grid and 10 players case, are strongly monotone functions.  As indicated in Lemma \ref{lemma:not-all-C-equal}, we cannot make this guarantee in general for players with different interaction costs.  This matters because the different starting point scheme for solving the simulation model is bolstered by the positive definiteness assumption and, as can be seen in Appendix \ref{appendix:different_cost_eigenvalues}, there is less of a chance that the matrix will be positive definite as the number of players increases.  Therefore, this suggests further computational work that could be done to see if non-positive definite matrices of our type work in general in our simulation framework.\footnote{Note that the one non-positive definite matrix in the 5x5 grid with 10 players and $\alpha=10$ example appears to work in the simulation framework because the flow error and objective function values are both low.}  It should also be noted that, for different costs, the experiments with $\alpha=5$ and $N=10$ for the 5x5 grid and Sioux Falls did not solve completely, with the 5x5 grid experiment only able to complete 6 iterations and the Sioux Falls experiment unable to complete any at the time of this posting. While Lemma \ref{complexity_lemma} shows that problem (\ref{empirical_residual_model_2}) is solvable in polynomial time, any solver will naturally encounter scalability problems as the problem size increases.  However, the reality that the framework did work for most of the experiments under restrictive $\alpha$ values of $(0.5) N$ validates the extension of the framework, because some of the OD pairs involved start, end, or both nodes where only two arcs were coming out of or into the node, which meant the capacity constraints were guaranteed to be tight. 


\subsection{Same Randomized Costs}

Table \ref{same_costs_experiment_details} details each of the experiment groups for this subsection and provides an experiment group number to which we will refer in this section.  In this subsection, we assume that $\mathcal{C}_i$ and $\bar{c}_i$ are equal across all players $i$ such that $\mathcal{C}_1 = \mathcal{C}_2 = ... = \mathcal{C}_N = \mathcal{C}$ and $\bar{c}_1 = \bar{c}_2 = ... = \bar{c}_N = \bar{c}$.  

\begin{table}[H]
    \centering
    \begin{tabular}{c|c|c|c|c|c|c|c} \textbf{\#} & \textbf{Network} & \textbf{Costs} & \textbf{Players ($N$)} & \textbf{\# Trials} & $\mathbf{\mathcal{C}}$ \textbf{Bounds} & $\mathbf{\bar{c}}$ \textbf{Bounds} & $\mathbf{\alpha}$ \\ \hline
    1 & 2x2-5x5 & Same & 2, 5, 10 & 10 & [1,5] & [5,20] & $0.5(N)$, $N$ \\
    2 & Sioux Falls & Same & 2, 5, 10 & 10 & [1,5] & [5,20] & $0.5(N)$, $N$
    \end{tabular}
    \caption{Experimental details for the ``Same Costs'' setting.  Here, \# refers to the experiment group number, Network refers to the graph upon which the experiment was run, $\mathcal{C}$ Bounds indicating the range for randomization and the bounds used in the IO mathematical program for the diagonal of the $\mathcal{C}$, $\bar{c}$ Bounds indicates the same for the $\bar{c}$ parameters, and $\alpha$ refers to that parameter value}
    \label{same_costs_experiment_details}
\end{table}

Experiment group 1 (Grid) iterates over generated 2x2, 3x3, 4x4, and 5x5 grids, each with 10 trials of randomly chosen parameters, which the inverse optimization residual model attempts to estimate.  For each grid, three different player numbers ($N=2,5,10$) and two different $\alpha$ values, one set to half the number of players for each $N$ and one set to exactly the number of players for each $N$, are considered.  The graphs for this experiment group display many box plots, and the system utilized for labeling the box plots is Grid Size/Number of Players/Alpha Value. Each set of eight moving from left to right indicate the same number of players.  With regard to flow error, Figure \ref{fig:flow_error_experiment_1} demonstrates that, for all of the subsets of players and $\alpha$ values, as grid size increases, the total flow error also increases.  It is important to note that everything on this graph is still on the order of 1e-7, but this trend is also evident.  It likely results from the fact that, as grid size increases, the number of arcs and number of OD pairs also increases and, since this measure is calculated across all OD pairs, all players, and all arcs, then even if consistent error were assumed across all arcs, the norm would have to increase. Indeed, upon examining the accompanying Figure \ref{fig:normalized_flow_error_experiment_1}, it is apparent that the normalized flow error, calculated as described in Definition \ref{def:normalized-flow-error}, decreases as grid size increase across all of the subsets of players and $\alpha$ values. 


\begin{figure}[h]
\begin{subfigure}[b]{0.8\textwidth}
\centering
\includegraphics[width=\textwidth,keepaspectratio]{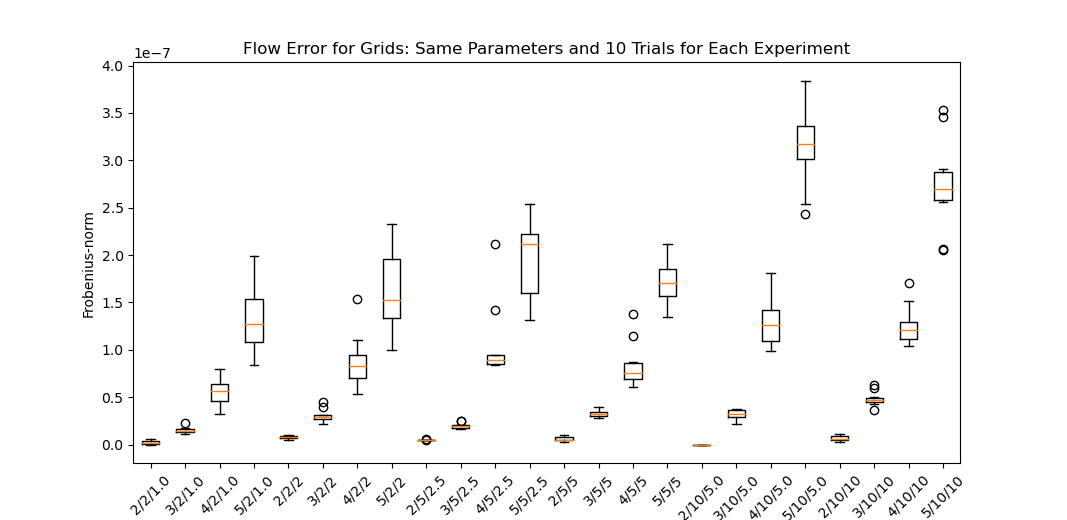}
\caption{Flow Error for Experiment Group 1}\label{fig:flow_error_experiment_1}
\end{subfigure}
\begin{subfigure}[b]{0.8\textwidth}
\centering
\includegraphics[width=\textwidth,keepaspectratio]{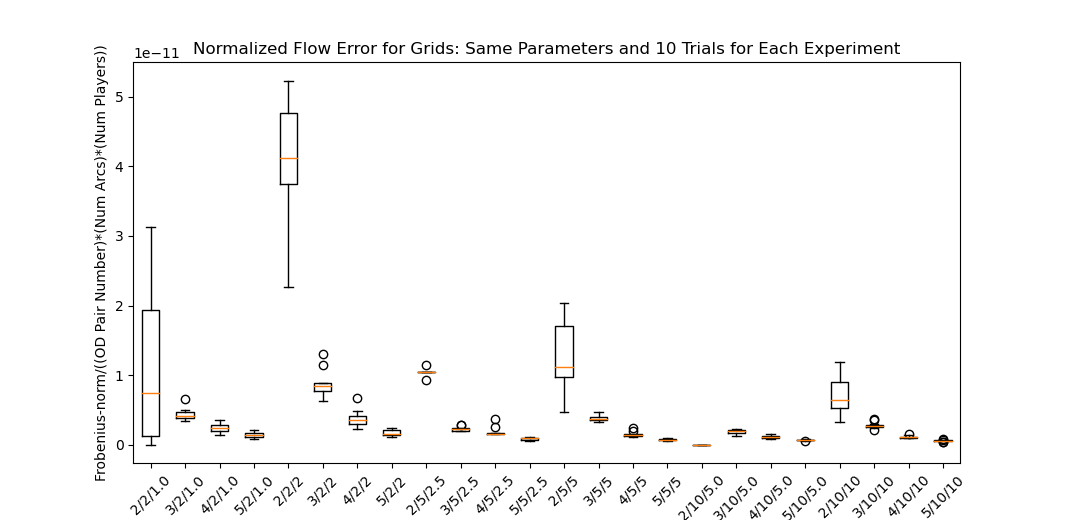}
\caption{Normalized Flow Error for Experiment Group 1}\label{fig:normalized_flow_error_experiment_1}
\end{subfigure}
\caption{Flow Error Metrics for Experiment Group 1: The labeling at the bottom of the graphs indicates attributes of the boxplot, specifically the Grid Size/Number of Players/Alpha Value}\label{fig:experiment_1_flow_error_metrics}
\end{figure}

Experiment group 2 (Sioux Falls) again consists of 10 trials for each of $N=2,5,10$ players and the two $\alpha=(0.5)N,N$ values.  The labeling for the graphics is set to Number of Players/Alpha Value.  In examining flow errors, Figure \ref{fig:flow_error_experiment_2} showcases that the flow errors do not appear to follow a set pattern when moving between the different numbers of players, yet they are all still small and on the order of 1e-7 or lower. However, in Figure \ref{fig:normalized_flow_error_experiment_2}, the median flow error decreases as number of players increases, even on this very small scale (1e-12).   

\begin{figure}[h!]
\begin{subfigure}[b]{0.49\textwidth}
\centering
\includegraphics[width=\textwidth,keepaspectratio]{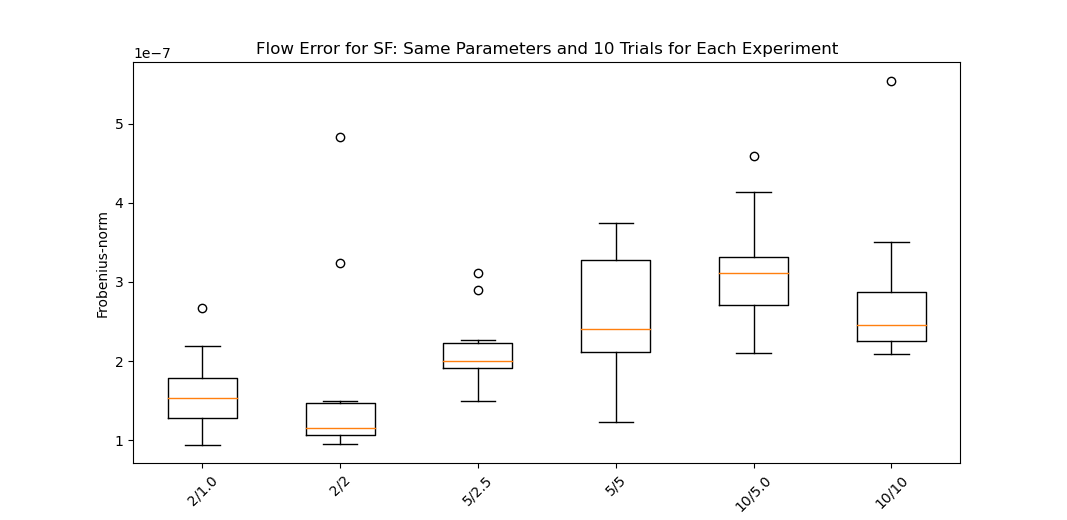}
\caption{Flow Error for Experiment Group 2}\label{fig:flow_error_experiment_2}
\end{subfigure}
\begin{subfigure}[b]{0.49\textwidth}
\centering
\includegraphics[width=\textwidth,keepaspectratio]{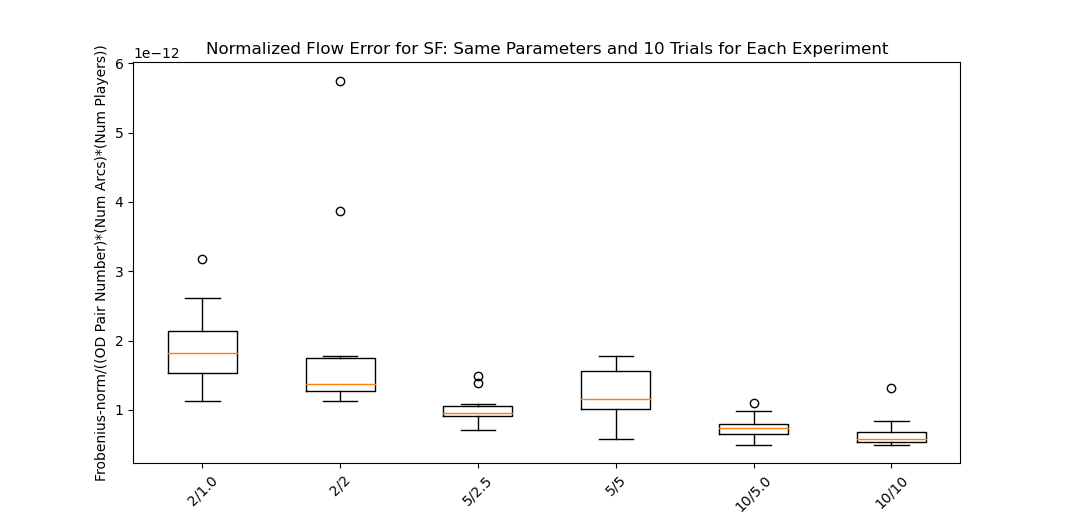}
\caption{Normalized Flow Error for Experiment Group 2}\label{fig:normalized_flow_error_experiment_2}
\end{subfigure}
\caption{Flow Error Metrics for Experiment Group 2: The labeling at the bottom of the graphs indicates attributes of the boxplot, specifically the Number of Players/Alpha Value}\label{fig:experiment_2_flow_error_metrics}
\end{figure}


\subsection{Different Randomized Costs}
Next, we assume that the costs are not the same across all of the players, meaning that a new $C_i$ and a new $\bar{c}_i$ are drawn for each player $i$.  Table~\ref{different_costs_experiment_details} describes each experiment group in this subsection; as in the previous subsection, an experiment group number is assigned to the two sets of experiment groups.

\begin{table}[h]
    \centering
    \begin{tabular}{c|c|c|c|c|c|c|c} \textbf{\#} & \textbf{Network} & \textbf{Costs} & \textbf{Players ($N$)} & \textbf{\# Trials} & $\mathbf{\mathcal{C}_{i}}$ \textbf{Bounds} & $\mathbf{\bar{c}_i}$ \textbf{Bounds} & $\mathbf{\alpha}$ \\ \hline
    3 & 2x2-5x5 & Different & 2, 5, 10 & 10 & [1,5] & [5,20] & $0.5(N)$, $N$ \\
    4 & Sioux Falls & Different & 2, 5, 10 & 10 & [1,5] & [5,20] & $0.5(N)$, $N$
    \end{tabular}
    \caption{Tables with Experiment Details for Different Costs.  Note that \# refers to the experiment group number, Network referring to the graph upon which the experiment was run, $\mathcal{C}_{i}$ Bounds indicating the range for randomization and the bounds used in the IO mathematical program for the diagonal of the $\mathcal{C}_i$, $\bar{c}_i$ Bounds indicates the same for the $\bar{c}_i$ parameters, and $\alpha$ refers to that parameter value}
    \label{different_costs_experiment_details}
\end{table}

Experiment group 3 is a mirror image of experiment group 1, except that all of the players do not have the same costs.  One issue with this experiment group was that not all of the trials finished for the case of the 5x5 grid, 10 players, and $\alpha=5$; only 6 of the trials finished in under 24 hours,\footnote{This means about 24 hours were given for each trial before the trial was stopped, with the exception of one that was stopped by the computer before convergence.} so they are the ones included in the box plots.  Similar to the flow error for the grids under the same costs (Figure \ref{fig:flow_error_experiment_1}), Figure \ref{fig:flow_error_experiment_3} demonstrates that the median flow error increases as the grid size increases for all the subsets of player number and $\alpha$ value.  Figure \ref{fig:normalized_flow_error_experiment_3} illustrates much of the same decrease in median normalized flow as grid size increases (among the subsets) as in Figure \ref{fig:normalized_flow_error_experiment_1} for same costs across all players.  

\begin{figure}[h!]
\begin{subfigure}[b]{0.49\textwidth}
\centering
\includegraphics[width=\textwidth,keepaspectratio]{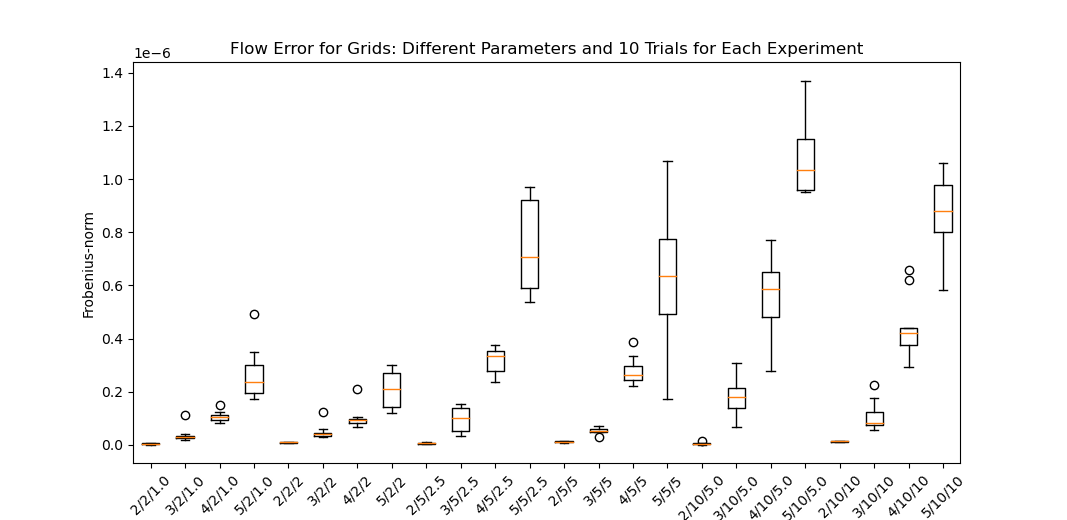}
\caption{Flow Error for Experiment Group 3}\label{fig:flow_error_experiment_3}
\end{subfigure}
\begin{subfigure}[b]{0.49\textwidth}
\centering
\includegraphics[width=\textwidth,keepaspectratio]{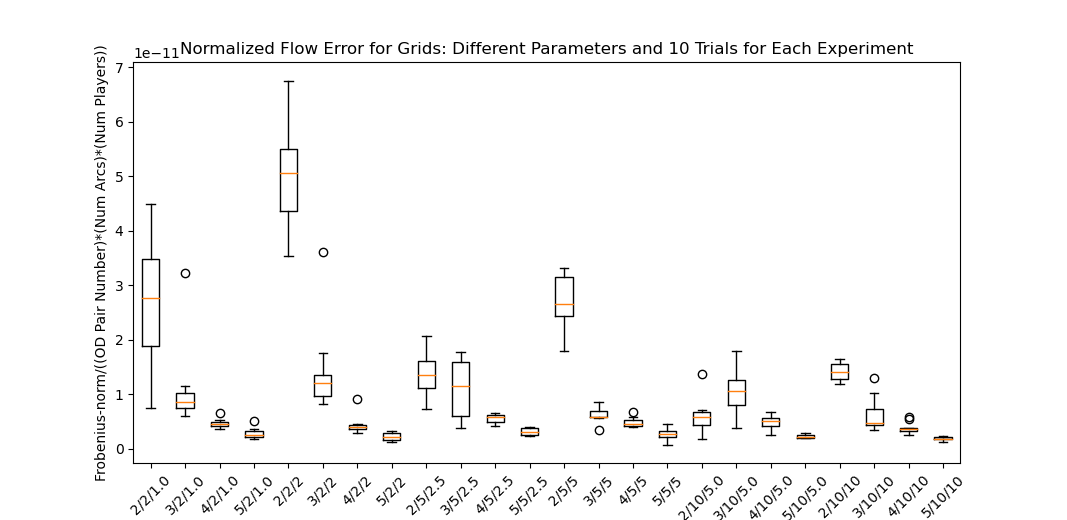}
\caption{Normalized Flow Error for Experiment Group 3}\label{fig:normalized_flow_error_experiment_3}
\end{subfigure}
\caption{Flow Error Metrics for Experiment Group 3: The labeling at the bottom of the graphs indicates attributes of the boxplot, specifically the Number of Players/Alpha Value.  Note: Only 6 trials are included for 5/10/5.0, see note in the text.}\label{fig:experiment_3_flow_error_metrics}
\end{figure}

Experiment group 4 (Sioux Falls) again consists of 10 trials for each of $N=2,5,10$ players and the two $\alpha=(0.5)N, N$ values.  The labeling for the graphics is Number of Players/Alpha Value.  It should be noted that the trials did not finish at the time of this posting for the $N=10$ and $\alpha=5$ experiment, so those boxplots are not included in Figure \ref{fig:experiment_4_flow_error_metrics}.  As concerns flow error, Figure \ref{fig:flow_error_experiment_4} shows increasing median flow error as player number increases, which was not as visible in Figure \ref{fig:flow_error_experiment_2} for experiment group 2.  However, unlike Experiment group 2's (Sioux Falls, but in the same cost setting) Figure \ref{fig:normalized_flow_error_experiment_2}, Figure \ref{fig:normalized_flow_error_experiment_4} does not demonstrate the same consistent increase in median normalized flow error, with normalized median flow decreasing with $N=10$ and $\alpha=10$.

\begin{figure}[h]
\begin{subfigure}[b]{0.49\textwidth}
\centering
\includegraphics[width=\textwidth,keepaspectratio]{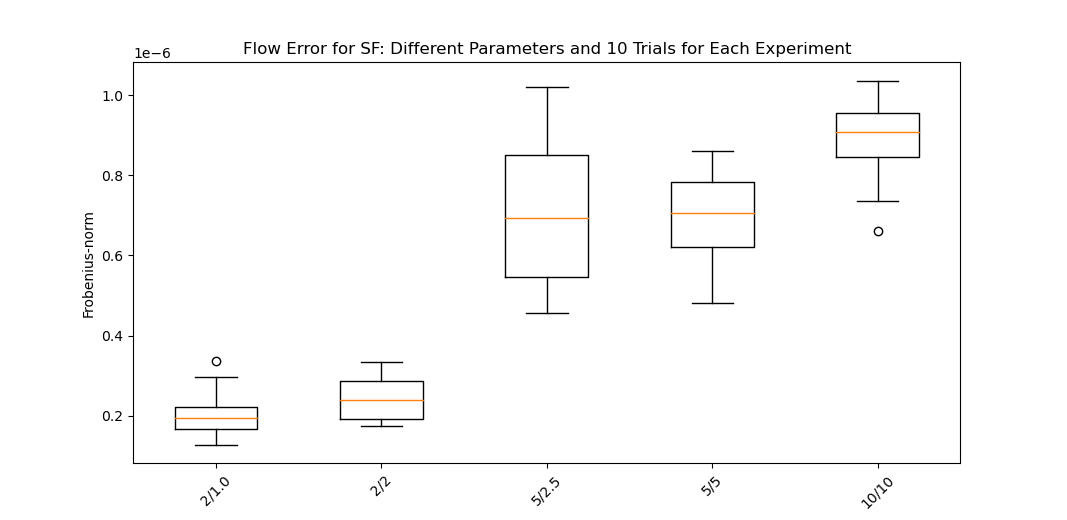}
\caption{Flow Error for Experiment Group 4}\label{fig:flow_error_experiment_4}
\end{subfigure}
\begin{subfigure}[b]{0.49\textwidth}
\centering
\includegraphics[width=\textwidth,keepaspectratio]{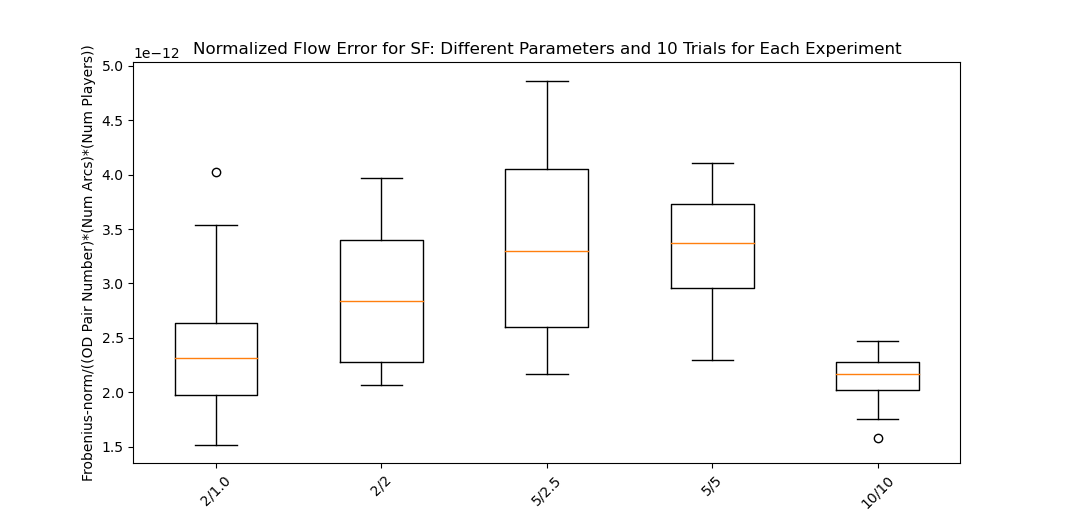}
\caption{Normalized Flow Error for Experiment Group 4}\label{fig:normalized_flow_error_experiment_4}
\end{subfigure}
\caption{Flow Error Metrics for Experiment Group 4: The labeling at the bottom of the graphs indicates attributes of the boxplot, specifically the Number of Players/Alpha Value.  Note: The trials did not finish in time for $N=10$ and $\alpha=5$, hence that boxplot is not included.}\label{fig:experiment_4_flow_error_metrics}
\end{figure}

\section{Conclusions \& Future Research}\label{sec:conclusions}

\citet{ratliff2014social} applied the framework due to \citet{keshavarz2011imputing} to multi-player Nash games.  We extended this framework to the case of generalized Nash games with joint/shared constraints by proving that we can use the VI KKT conditions 
in conjunction with a residual model
to recover a parameterization for the objective functions of the players in 
the multi-player game.
We have seen that, although our model may not recover the original parameterization, it recovers a parameterization that produces the same flow patterns as the original parameterization.  This holds true across multiple grid sizes, the Sioux Falls network, different assumptions regarding players' perceived costs, and the majority of restrictive $\alpha$ capacity settings and the associated numbers of players.

Further research could extend our model to the setting where the multipliers for the shared constraint are not assumed to be equal, especially in the case when the cost functions differ between the players \citep{gabriel2012complementarity}.  For more on this, we suggest to readers the work of \citet{nabetani2011parametrized}.  In addition, we could extend the research to a real-world traffic situation with real data, in which we would likely need to incorporate more players into our framework and different origin-destination traffic patterns, including ones in which not all the players leave from the same origin node and head to the same destination node.   Furthermore, we could attempt to address some of the issues with the Nash approach, including looking at a low regret condition on the players as in \cite{nekipelov2015econometrics,waugh2011computational,waugh2013computational}.  We could also explore making our model more robust to the issues indicated by \cite{peysakhovich2019robust} such as rationality assumptions and incorrect models. 

\begin{acks}
%
%
%
%
Allen was partially funded by a Graduate Fellowship in STEM Diversity while completing this research (formerly known as a National Physical Science Consortium Fellowship).  Allen was also supported by a Flagship Fellowship and a Dean's Fellowship from the University of Maryland-College Park and has worked for Johns Hopkins University Applied Physics Lab in the summers.

Dickerson was supported in part by NSF CAREER Award IIS-1846237, NSF D-ISN Award \#2039862, NSF Award CCF-1852352, NIH R01 Award NLM-013039-01, NIST MSE Award \#20126334, DARPA GARD \#HR00112020007, DoD WHS Award \#HQ003420F0035, and a Google Faculty Research Award.
%

Gabriel was supported by the following grant: Research Council of Norway, “Trans-Atlantic Cooperation on Energy Market Modeling,” 2018-2022.

We would like to thank Michael Curry of University of Maryland, College Park for helpful comments earlier on in this project.  We would like to thank Luke Evans of University of Maryland, College Park for reading an early draft of this paper. 
\end{acks}

\bibliographystyle{ACM-Reference-Format}
\bibliography{sample-file}


\begin{thebibliography}{87}


\ifx \showCODEN    \undefined \def \showCODEN     #1{\unskip}     \fi
\ifx \showDOI      \undefined \def \showDOI       #1{#1}\fi
\ifx \showISBNx    \undefined \def \showISBNx     #1{\unskip}     \fi
\ifx \showISBNxiii \undefined \def \showISBNxiii  #1{\unskip}     \fi
\ifx \showISSN     \undefined \def \showISSN      #1{\unskip}     \fi
\ifx \showLCCN     \undefined \def \showLCCN      #1{\unskip}     \fi
\ifx \shownote     \undefined \def \shownote      #1{#1}          \fi
\ifx \showarticletitle \undefined \def \showarticletitle #1{#1}   \fi
\ifx \showURL      \undefined \def \showURL       {\relax}        \fi
\providecommand\bibfield[2]{#2}
\providecommand\bibinfo[2]{#2}
\providecommand\natexlab[1]{#1}
\providecommand\showeprint[2][]{arXiv:#2}

\bibitem[\protect\citeauthoryear{Ahuja and Orlin}{Ahuja and Orlin}{2001}]%
        {ahuja2001inverse}
\bibfield{author}{\bibinfo{person}{Ravindra~K Ahuja} {and}
  \bibinfo{person}{James~B Orlin}.} \bibinfo{year}{2001}\natexlab{}.
\newblock \showarticletitle{Inverse optimization}.
\newblock \bibinfo{journal}{\emph{Operations Research}} \bibinfo{volume}{49},
  \bibinfo{number}{5} (\bibinfo{year}{2001}), \bibinfo{pages}{771--783}.
\newblock


\bibitem[\protect\citeauthoryear{Aswani, Shen, and Siddiq}{Aswani
  et~al\mbox{.}}{2018}]%
        {aswani2018inverse}
\bibfield{author}{\bibinfo{person}{Anil Aswani}, \bibinfo{person}{Zuo-Jun
  Shen}, {and} \bibinfo{person}{Auyon Siddiq}.}
  \bibinfo{year}{2018}\natexlab{}.
\newblock \showarticletitle{Inverse optimization with noisy data}.
\newblock \bibinfo{journal}{\emph{Operations Research}} \bibinfo{volume}{66},
  \bibinfo{number}{3} (\bibinfo{year}{2018}), \bibinfo{pages}{870--892}.
\newblock


\bibitem[\protect\citeauthoryear{B{\"a}rmann, Martin, Pokutta, and
  Schneider}{B{\"a}rmann et~al\mbox{.}}{2018}]%
        {barmann2018online}
\bibfield{author}{\bibinfo{person}{Andreas B{\"a}rmann},
  \bibinfo{person}{Alexander Martin}, \bibinfo{person}{Sebastian Pokutta},
  {and} \bibinfo{person}{Oskar Schneider}.} \bibinfo{year}{2018}\natexlab{}.
\newblock \showarticletitle{An online-learning approach to inverse
  optimization}.
\newblock \bibinfo{journal}{\emph{arXiv preprint arXiv:1810.12997}}
  (\bibinfo{year}{2018}).
\newblock


\bibitem[\protect\citeauthoryear{B{\"a}rmann, Pokutta, and
  Schneider}{B{\"a}rmann et~al\mbox{.}}{2017}]%
        {barmann2017emulating}
\bibfield{author}{\bibinfo{person}{Andreas B{\"a}rmann},
  \bibinfo{person}{Sebastian Pokutta}, {and} \bibinfo{person}{Oskar
  Schneider}.} \bibinfo{year}{2017}\natexlab{}.
\newblock \showarticletitle{Emulating the expert: inverse optimization through
  online learning}. In \bibinfo{booktitle}{\emph{International Conference on
  Machine Learning}}. \bibinfo{pages}{400--410}.
\newblock


\bibitem[\protect\citeauthoryear{Bazaraa, Sherali, and Shetty}{Bazaraa
  et~al\mbox{.}}{2013}]%
        {bazaraa2013nonlinear}
\bibfield{author}{\bibinfo{person}{Mokhtar~S Bazaraa}, \bibinfo{person}{Hanif~D
  Sherali}, {and} \bibinfo{person}{Chitharanjan~M Shetty}.}
  \bibinfo{year}{2013}\natexlab{}.
\newblock \bibinfo{booktitle}{\emph{Nonlinear programming: theory and
  algorithms}}.
\newblock \bibinfo{publisher}{John Wiley \& Sons}.
\newblock


\bibitem[\protect\citeauthoryear{Bertsimas, Gupta, and Paschalidis}{Bertsimas
  et~al\mbox{.}}{2015}]%
        {bertsimas2015data}
\bibfield{author}{\bibinfo{person}{Dimitris Bertsimas}, \bibinfo{person}{Vishal
  Gupta}, {and} \bibinfo{person}{Ioannis~Ch Paschalidis}.}
  \bibinfo{year}{2015}\natexlab{}.
\newblock \showarticletitle{Data-driven estimation in equilibrium using inverse
  optimization}.
\newblock \bibinfo{journal}{\emph{Mathematical Programming}}
  \bibinfo{volume}{153}, \bibinfo{number}{2} (\bibinfo{year}{2015}),
  \bibinfo{pages}{595--633}.
\newblock


\bibitem[\protect\citeauthoryear{Bhaskar, Ligett, Schulman, and Swamy}{Bhaskar
  et~al\mbox{.}}{2019}]%
        {bhaskar2019achieving}
\bibfield{author}{\bibinfo{person}{Umang Bhaskar}, \bibinfo{person}{Katrina
  Ligett}, \bibinfo{person}{Leonard~J Schulman}, {and}
  \bibinfo{person}{Chaitanya Swamy}.} \bibinfo{year}{2019}\natexlab{}.
\newblock \showarticletitle{Achieving target equilibria in network routing
  games without knowing the latency functions}.
\newblock \bibinfo{journal}{\emph{Games and Economic Behavior}}
  \bibinfo{volume}{118} (\bibinfo{year}{2019}), \bibinfo{pages}{533--569}.
\newblock


\bibitem[\protect\citeauthoryear{Birge and Holmes}{Birge and Holmes}{1992}]%
        {birge1992efficient}
\bibfield{author}{\bibinfo{person}{John~R Birge} {and} \bibinfo{person}{Derek~F
  Holmes}.} \bibinfo{year}{1992}\natexlab{}.
\newblock \showarticletitle{Efficient solution of two-stage stochastic linear
  programs using interior point methods}.
\newblock \bibinfo{journal}{\emph{Computational Optimization and Applications}}
  \bibinfo{volume}{1}, \bibinfo{number}{3} (\bibinfo{year}{1992}),
  \bibinfo{pages}{245--276}.
\newblock


\bibitem[\protect\citeauthoryear{Boyd and Vandenberghe}{Boyd and
  Vandenberghe}{2004}]%
        {boyd2004convex}
\bibfield{author}{\bibinfo{person}{Stephen Boyd} {and} \bibinfo{person}{Lieven
  Vandenberghe}.} \bibinfo{year}{2004}\natexlab{}.
\newblock \bibinfo{booktitle}{\emph{Convex optimization}}.
\newblock \bibinfo{publisher}{Cambridge university press}.
\newblock


\bibitem[\protect\citeauthoryear{Chan, Craig, Lee, and Sharpe}{Chan
  et~al\mbox{.}}{2014}]%
        {chan2014generalized}
\bibfield{author}{\bibinfo{person}{Timothy~CY Chan}, \bibinfo{person}{Tim
  Craig}, \bibinfo{person}{Taewoo Lee}, {and} \bibinfo{person}{Michael~B
  Sharpe}.} \bibinfo{year}{2014}\natexlab{}.
\newblock \showarticletitle{Generalized inverse multiobjective optimization
  with application to cancer therapy}.
\newblock \bibinfo{journal}{\emph{Operations Research}} \bibinfo{volume}{62},
  \bibinfo{number}{3} (\bibinfo{year}{2014}), \bibinfo{pages}{680--695}.
\newblock


\bibitem[\protect\citeauthoryear{Chan and Kaw}{Chan and Kaw}{2020}]%
        {chan2020inverse}
\bibfield{author}{\bibinfo{person}{Timothy~CY Chan} {and} \bibinfo{person}{Neal
  Kaw}.} \bibinfo{year}{2020}\natexlab{}.
\newblock \showarticletitle{Inverse optimization for the recovery of constraint
  parameters}.
\newblock \bibinfo{journal}{\emph{European Journal of Operational Research}}
  \bibinfo{volume}{282}, \bibinfo{number}{2} (\bibinfo{year}{2020}),
  \bibinfo{pages}{415--427}.
\newblock


\bibitem[\protect\citeauthoryear{Chan, Lee, and Terekhov}{Chan
  et~al\mbox{.}}{2019}]%
        {chan2019inverse}
\bibfield{author}{\bibinfo{person}{Timothy~CY Chan}, \bibinfo{person}{Taewoo
  Lee}, {and} \bibinfo{person}{Daria Terekhov}.}
  \bibinfo{year}{2019}\natexlab{}.
\newblock \showarticletitle{Inverse optimization: Closed-form solutions,
  geometry, and goodness of fit}.
\newblock \bibinfo{journal}{\emph{Management Science}} \bibinfo{volume}{65},
  \bibinfo{number}{3} (\bibinfo{year}{2019}), \bibinfo{pages}{1115--1135}.
\newblock


\bibitem[\protect\citeauthoryear{Chow and Djavadian}{Chow and
  Djavadian}{2015}]%
        {chow2015activity}
\bibfield{author}{\bibinfo{person}{Joseph~YJ Chow} {and} \bibinfo{person}{Shadi
  Djavadian}.} \bibinfo{year}{2015}\natexlab{}.
\newblock \showarticletitle{Activity-based market equilibrium for capacitated
  multimodal transport systems}.
\newblock \bibinfo{journal}{\emph{Transportation Research Procedia}}
  \bibinfo{volume}{7} (\bibinfo{year}{2015}), \bibinfo{pages}{2--23}.
\newblock


\bibitem[\protect\citeauthoryear{Chow and Recker}{Chow and Recker}{2012}]%
        {chow2012inverse}
\bibfield{author}{\bibinfo{person}{Joseph~YJ Chow} {and}
  \bibinfo{person}{Will~W Recker}.} \bibinfo{year}{2012}\natexlab{}.
\newblock \showarticletitle{Inverse optimization with endogenous arrival time
  constraints to calibrate the household activity pattern problem}.
\newblock \bibinfo{journal}{\emph{Transportation Research Part B:
  Methodological}} \bibinfo{volume}{46}, \bibinfo{number}{3}
  (\bibinfo{year}{2012}), \bibinfo{pages}{463--479}.
\newblock


\bibitem[\protect\citeauthoryear{Chow, Ritchie, and Jeong}{Chow
  et~al\mbox{.}}{2014}]%
        {chow2014nonlinear}
\bibfield{author}{\bibinfo{person}{Joseph~YJ Chow}, \bibinfo{person}{Stephen~G
  Ritchie}, {and} \bibinfo{person}{Kyungsoo Jeong}.}
  \bibinfo{year}{2014}\natexlab{}.
\newblock \showarticletitle{Nonlinear inverse optimization for parameter
  estimation of commodity-vehicle-decoupled freight assignment}.
\newblock \bibinfo{journal}{\emph{Transportation Research Part E: Logistics and
  Transportation Review}}  \bibinfo{volume}{67} (\bibinfo{year}{2014}),
  \bibinfo{pages}{71--91}.
\newblock


\bibitem[\protect\citeauthoryear{Cominetti, Correa, and Stier-Moses}{Cominetti
  et~al\mbox{.}}{2006}]%
        {cominetti2006network}
\bibfield{author}{\bibinfo{person}{Roberto Cominetti},
  \bibinfo{person}{Jos{\'e}~R Correa}, {and} \bibinfo{person}{Nicol{\'a}s~E
  Stier-Moses}.} \bibinfo{year}{2006}\natexlab{}.
\newblock \showarticletitle{Network games with atomic players}. In
  \bibinfo{booktitle}{\emph{International Colloquium on Automata, Languages,
  and Programming}}. Springer, \bibinfo{pages}{525--536}.
\newblock


\bibitem[\protect\citeauthoryear{Cottle, Pang, and Stone}{Cottle
  et~al\mbox{.}}{1992}]%
        {cottle_lcp}
\bibfield{author}{\bibinfo{person}{Richard~W. Cottle},
  \bibinfo{person}{Jong-Shi Pang}, {and} \bibinfo{person}{Richard~E. Stone}.}
  \bibinfo{year}{1992}\natexlab{}.
\newblock \bibinfo{booktitle}{\emph{The Linear Complementarity Problem}}.
\newblock \bibinfo{publisher}{Academic Press, Inc}.
\newblock


\bibitem[\protect\citeauthoryear{Dirkse and Ferris}{Dirkse and Ferris}{1995}]%
        {dirkse1995path}
\bibfield{author}{\bibinfo{person}{Steven~P Dirkse} {and}
  \bibinfo{person}{Michael~C Ferris}.} \bibinfo{year}{1995}\natexlab{}.
\newblock \showarticletitle{The path solver: a nommonotone stabilization scheme
  for mixed complementarity problems}.
\newblock \bibinfo{journal}{\emph{Optimization Methods and Software}}
  \bibinfo{volume}{5}, \bibinfo{number}{2} (\bibinfo{year}{1995}),
  \bibinfo{pages}{123--156}.
\newblock


\bibitem[\protect\citeauthoryear{Dong, Chen, and Zeng}{Dong
  et~al\mbox{.}}{2018}]%
        {dong2018generalized}
\bibfield{author}{\bibinfo{person}{Chaosheng Dong}, \bibinfo{person}{Yiran
  Chen}, {and} \bibinfo{person}{Bo Zeng}.} \bibinfo{year}{2018}\natexlab{}.
\newblock \showarticletitle{Generalized Inverse Optimization through Online
  Learning}. In \bibinfo{booktitle}{\emph{Advances in Neural Information
  Processing Systems}}. \bibinfo{pages}{86--95}.
\newblock


\bibitem[\protect\citeauthoryear{Duin and Volgenant}{Duin and
  Volgenant}{2006}]%
        {duin2006some}
\bibfield{author}{\bibinfo{person}{CW Duin} {and} \bibinfo{person}{A
  Volgenant}.} \bibinfo{year}{2006}\natexlab{}.
\newblock \showarticletitle{Some inverse optimization problems under the
  Hamming distance}.
\newblock \bibinfo{journal}{\emph{European Journal of operational research}}
  \bibinfo{volume}{170}, \bibinfo{number}{3} (\bibinfo{year}{2006}),
  \bibinfo{pages}{887--899}.
\newblock


\bibitem[\protect\citeauthoryear{Eaves}{Eaves}{1971}]%
        {eaves1971basic}
\bibfield{author}{\bibinfo{person}{B~Curtis Eaves}.}
  \bibinfo{year}{1971}\natexlab{}.
\newblock \showarticletitle{On the basic theorem of complementarity}.
\newblock \bibinfo{journal}{\emph{Mathematical Programming}}
  \bibinfo{volume}{1}, \bibinfo{number}{1} (\bibinfo{year}{1971}),
  \bibinfo{pages}{68--75}.
\newblock


\bibitem[\protect\citeauthoryear{Elvik}{Elvik}{2014}]%
        {elvik2014review}
\bibfield{author}{\bibinfo{person}{Rune Elvik}.}
  \bibinfo{year}{2014}\natexlab{}.
\newblock \showarticletitle{A review of game-theoretic models of road user
  behaviour}.
\newblock \bibinfo{journal}{\emph{Accident Analysis \& Prevention}}
  \bibinfo{volume}{62} (\bibinfo{year}{2014}), \bibinfo{pages}{388--396}.
\newblock


\bibitem[\protect\citeauthoryear{Esfahani, Shafieezadeh-Abadeh, Hanasusanto,
  and Kuhn}{Esfahani et~al\mbox{.}}{2018}]%
        {esfahani2018data}
\bibfield{author}{\bibinfo{person}{Peyman~Mohajerin Esfahani},
  \bibinfo{person}{Soroosh Shafieezadeh-Abadeh}, \bibinfo{person}{Grani~A
  Hanasusanto}, {and} \bibinfo{person}{Daniel Kuhn}.}
  \bibinfo{year}{2018}\natexlab{}.
\newblock \showarticletitle{Data-driven inverse optimization with imperfect
  information}.
\newblock \bibinfo{journal}{\emph{Mathematical Programming}}
  \bibinfo{volume}{167}, \bibinfo{number}{1} (\bibinfo{year}{2018}),
  \bibinfo{pages}{191--234}.
\newblock


\bibitem[\protect\citeauthoryear{Facchinei, Fischer, and Piccialli}{Facchinei
  et~al\mbox{.}}{2007}]%
        {facchinei2007generalized}
\bibfield{author}{\bibinfo{person}{Francisco Facchinei},
  \bibinfo{person}{Andreas Fischer}, {and} \bibinfo{person}{Veronica
  Piccialli}.} \bibinfo{year}{2007}\natexlab{}.
\newblock \showarticletitle{On generalized Nash games and variational
  inequalities}.
\newblock \bibinfo{journal}{\emph{Operations Research Letters}}
  \bibinfo{volume}{35}, \bibinfo{number}{2} (\bibinfo{year}{2007}),
  \bibinfo{pages}{159--164}.
\newblock


\bibitem[\protect\citeauthoryear{Facchinei and Kanzow}{Facchinei and
  Kanzow}{2010}]%
        {facchinei2010generalized}
\bibfield{author}{\bibinfo{person}{Francisco Facchinei} {and}
  \bibinfo{person}{Christian Kanzow}.} \bibinfo{year}{2010}\natexlab{}.
\newblock \showarticletitle{Generalized Nash equilibrium problems}.
\newblock \bibinfo{journal}{\emph{Annals of Operations Research}}
  \bibinfo{volume}{175}, \bibinfo{number}{1} (\bibinfo{year}{2010}),
  \bibinfo{pages}{177--211}.
\newblock


\bibitem[\protect\citeauthoryear{Facchinei and Pang}{Facchinei and
  Pang}{2007}]%
        {facchinei2007finite}
\bibfield{author}{\bibinfo{person}{Francisco Facchinei} {and}
  \bibinfo{person}{Jong-Shi Pang}.} \bibinfo{year}{2007}\natexlab{}.
\newblock \bibinfo{booktitle}{\emph{Finite-dimensional variational inequalities
  and complementarity problems}}.
\newblock \bibinfo{publisher}{Springer Science \& Business Media}.
\newblock


\bibitem[\protect\citeauthoryear{Ferris and Munson}{Ferris and Munson}{2000}]%
        {ferris2000complementarity}
\bibfield{author}{\bibinfo{person}{Michael~C Ferris} {and}
  \bibinfo{person}{Todd~S Munson}.} \bibinfo{year}{2000}\natexlab{}.
\newblock \showarticletitle{Complementarity problems in GAMS and the PATH
  solver}.
\newblock \bibinfo{journal}{\emph{Journal of Economic Dynamics and Control}}
  \bibinfo{volume}{24}, \bibinfo{number}{2} (\bibinfo{year}{2000}),
  \bibinfo{pages}{165--188}.
\newblock


\bibitem[\protect\citeauthoryear{Ferris and Munson}{Ferris and Munson}{2020}]%
        {path_website}
\bibfield{author}{\bibinfo{person}{Michael~C. Ferris} {and}
  \bibinfo{person}{Todd~S. Munson}.} \bibinfo{year}{2020}\natexlab{}.
\newblock \bibinfo{title}{PATH 4.7}.
\newblock
  \bibinfo{howpublished}{\url{https://www.gams.com/latest/docs/S_PATH.html}}.
\newblock


\bibitem[\protect\citeauthoryear{Gabriel, Conejo, Fuller, Hobbs, and
  Ruiz}{Gabriel et~al\mbox{.}}{2012}]%
        {gabriel2012complementarity}
\bibfield{author}{\bibinfo{person}{Steven~A Gabriel},
  \bibinfo{person}{Antonio~J Conejo}, \bibinfo{person}{J~David Fuller},
  \bibinfo{person}{Benjamin~F Hobbs}, {and} \bibinfo{person}{Carlos Ruiz}.}
  \bibinfo{year}{2012}\natexlab{}.
\newblock \bibinfo{booktitle}{\emph{Complementarity modeling in energy
  markets}}. Vol.~\bibinfo{volume}{180}.
\newblock \bibinfo{publisher}{Springer Science \& Business Media}.
\newblock


\bibitem[\protect\citeauthoryear{{GAMS Development Corporation}}{{GAMS
  Development Corporation}}{2020}]%
        {GAMS_software_33_2}
\bibfield{author}{\bibinfo{person}{{GAMS Development Corporation}}.}
  \bibinfo{year}{2020}\natexlab{}.
\newblock \bibinfo{title}{General Algebraic Modeling System (GAMS) Release
  33.2.0}.
\newblock \bibinfo{howpublished}{\url{https://www.gams.com/download/}}.
\newblock


\bibitem[\protect\citeauthoryear{{GAMS Development Corporation}}{{GAMS
  Development Corporation}}{2021}]%
        {GAMS_software_34_1}
\bibfield{author}{\bibinfo{person}{{GAMS Development Corporation}}.}
  \bibinfo{year}{2021}\natexlab{}.
\newblock \bibinfo{title}{General Algebraic Modeling System (GAMS) Release
  34.1.0}.
\newblock \bibinfo{howpublished}{\url{https://www.gams.com/download/}}.
\newblock


\bibitem[\protect\citeauthoryear{Ghobadi and Mahmoudzadeh}{Ghobadi and
  Mahmoudzadeh}{2021}]%
        {ghobadi2021inferring}
\bibfield{author}{\bibinfo{person}{Kimia Ghobadi} {and} \bibinfo{person}{Houra
  Mahmoudzadeh}.} \bibinfo{year}{2021}\natexlab{}.
\newblock \showarticletitle{Inferring linear feasible regions using inverse
  optimization}.
\newblock \bibinfo{journal}{\emph{European Journal of Operational Research}}
  \bibinfo{volume}{290}, \bibinfo{number}{3} (\bibinfo{year}{2021}),
  \bibinfo{pages}{829--843}.
\newblock


\bibitem[\protect\citeauthoryear{Gurobi~Optimization}{Gurobi~Optimization}{2021a}]%
        {barcovtol_gurobi}
\bibfield{author}{\bibinfo{person}{LLC Gurobi~Optimization}.}
  \bibinfo{year}{2021}\natexlab{a}.
\newblock \bibinfo{title}{BarConvTol}.
\newblock
\newblock
\urldef\tempurl%
\url{https://www.gurobi.com/documentation/9.1/refman/barconvtol.html}
\showURL{%
\tempurl}


\bibitem[\protect\citeauthoryear{Gurobi~Optimization}{Gurobi~Optimization}{2021b}]%
        {gurobi_citation}
\bibfield{author}{\bibinfo{person}{LLC Gurobi~Optimization}.}
  \bibinfo{year}{2021}\natexlab{b}.
\newblock \bibinfo{title}{Gurobi Optimizer Reference Manual}.
\newblock
\newblock
\urldef\tempurl%
\url{http://www.gurobi.com}
\showURL{%
\tempurl}


\bibitem[\protect\citeauthoryear{Hagberg, Schult, and Swart}{Hagberg
  et~al\mbox{.}}{2008}]%
        {networkx_citation}
\bibfield{author}{\bibinfo{person}{Aric~A. Hagberg}, \bibinfo{person}{Daniel~A.
  Schult}, {and} \bibinfo{person}{Pieter~J. Swart}.}
  \bibinfo{year}{2008}\natexlab{}.
\newblock \showarticletitle{Exploring Network Structure, Dynamics, and Function
  using NetworkX}. In \bibinfo{booktitle}{\emph{Proceedings of the 7th Python
  in Science Conference}}, \bibfield{editor}{\bibinfo{person}{Ga\"el
  Varoquaux}, \bibinfo{person}{Travis Vaught}, {and} \bibinfo{person}{Jarrod
  Millman}} (Eds.). \bibinfo{address}{Pasadena, CA USA}, \bibinfo{pages}{11 --
  15}.
\newblock


\bibitem[\protect\citeauthoryear{Harker}{Harker}{1991}]%
        {harker1991generalized}
\bibfield{author}{\bibinfo{person}{Patrick~T Harker}.}
  \bibinfo{year}{1991}\natexlab{}.
\newblock \showarticletitle{Generalized Nash games and quasi-variational
  inequalities}.
\newblock \bibinfo{journal}{\emph{European journal of Operational research}}
  \bibinfo{volume}{54}, \bibinfo{number}{1} (\bibinfo{year}{1991}),
  \bibinfo{pages}{81--94}.
\newblock


\bibitem[\protect\citeauthoryear{Harker and Pang}{Harker and Pang}{1990}]%
        {harker1990finite}
\bibfield{author}{\bibinfo{person}{Patrick~T Harker} {and}
  \bibinfo{person}{Jong-Shi Pang}.} \bibinfo{year}{1990}\natexlab{}.
\newblock \showarticletitle{Finite-dimensional variational inequality and
  nonlinear complementarity problems: a survey of theory, algorithms and
  applications}.
\newblock \bibinfo{journal}{\emph{Mathematical programming}}
  \bibinfo{volume}{48}, \bibinfo{number}{1-3} (\bibinfo{year}{1990}),
  \bibinfo{pages}{161--220}.
\newblock


\bibitem[\protect\citeauthoryear{Hart, Laird, Watson, Woodruff, Hackebeil,
  Nicholson, and Siirola}{Hart et~al\mbox{.}}{2017}]%
        {hart2017mathematical}
\bibfield{author}{\bibinfo{person}{William~E Hart}, \bibinfo{person}{Carl~D
  Laird}, \bibinfo{person}{Jean-Paul Watson}, \bibinfo{person}{David~L
  Woodruff}, \bibinfo{person}{Gabriel~A Hackebeil}, \bibinfo{person}{Bethany~L
  Nicholson}, {and} \bibinfo{person}{John~D Siirola}.}
  \bibinfo{year}{2017}\natexlab{}.
\newblock \bibinfo{booktitle}{\emph{Pyomo—Optimization Modeling in Python:
  Second Edition}}. Vol.~\bibinfo{volume}{67}.
\newblock \bibinfo{publisher}{Springer Optimization and Its Applications}.
\newblock


\bibitem[\protect\citeauthoryear{Hart, Watson, and Woodruff}{Hart
  et~al\mbox{.}}{2011}]%
        {hart2011pyomo}
\bibfield{author}{\bibinfo{person}{William~E Hart}, \bibinfo{person}{Jean-Paul
  Watson}, {and} \bibinfo{person}{David~L Woodruff}.}
  \bibinfo{year}{2011}\natexlab{}.
\newblock \showarticletitle{Pyomo: modeling and solving mathematical programs
  in Python}.
\newblock \bibinfo{journal}{\emph{Mathematical Programming Computation}}
  \bibinfo{volume}{3}, \bibinfo{number}{3} (\bibinfo{year}{2011}),
  \bibinfo{pages}{219}.
\newblock


\bibitem[\protect\citeauthoryear{Hartman and Stampacchia}{Hartman and
  Stampacchia}{1966}]%
        {hartman1966some}
\bibfield{author}{\bibinfo{person}{Philip Hartman} {and} \bibinfo{person}{Guido
  Stampacchia}.} \bibinfo{year}{1966}\natexlab{}.
\newblock \showarticletitle{On some non-linear elliptic differential-functional
  equations}.
\newblock \bibinfo{journal}{\emph{Acta mathematica}} \bibinfo{volume}{115},
  \bibinfo{number}{1} (\bibinfo{year}{1966}), \bibinfo{pages}{271--310}.
\newblock


\bibitem[\protect\citeauthoryear{Hemmecke, Onn, and Weismantel}{Hemmecke
  et~al\mbox{.}}{2009}]%
        {hemmecke2009nash}
\bibfield{author}{\bibinfo{person}{Raymond Hemmecke}, \bibinfo{person}{Shmuel
  Onn}, {and} \bibinfo{person}{Robert Weismantel}.}
  \bibinfo{year}{2009}\natexlab{}.
\newblock \showarticletitle{Nash-equilibria and N-fold integer programming}.
\newblock \bibinfo{journal}{\emph{arXiv preprint arXiv:0903.4577}}
  (\bibinfo{year}{2009}).
\newblock


\bibitem[\protect\citeauthoryear{Horn and Johnson}{Horn and Johnson}{1985}]%
        {horn1985matrix}
\bibfield{author}{\bibinfo{person}{R.A. Horn} {and} \bibinfo{person}{C.R.
  Johnson}.} \bibinfo{year}{1985}\natexlab{}.
\newblock \bibinfo{booktitle}{\emph{Matrix Analysis}}.
\newblock \bibinfo{publisher}{Cambridge University Press}.
\newblock


\bibitem[\protect\citeauthoryear{Hunter, Dale, Firing, Droettboom, and
  development team}{Hunter et~al\mbox{.}}{2020}]%
        {matplotlib_boxplot}
\bibfield{author}{\bibinfo{person}{John Hunter}, \bibinfo{person}{Darren Dale},
  \bibinfo{person}{Eric Firing}, \bibinfo{person}{Michael Droettboom}, {and}
  \bibinfo{person}{Matplotlib development team}.} \bibinfo{year}{April
  2020}\natexlab{}.
\newblock \bibinfo{title}{matplotlib.pyplot.boxplot}.
\newblock
  \bibinfo{howpublished}{\url{https://matplotlib.org/3.2.1/api/_as_gen/matplotlib.pyplot.boxplot.html}}.
\newblock


\bibitem[\protect\citeauthoryear{Hunter}{Hunter}{2007}]%
        {Hunter:2007}
\bibfield{author}{\bibinfo{person}{J.~D. Hunter}.}
  \bibinfo{year}{2007}\natexlab{}.
\newblock \showarticletitle{Matplotlib: A 2D graphics environment}.
\newblock \bibinfo{journal}{\emph{Computing in Science \& Engineering}}
  \bibinfo{volume}{9}, \bibinfo{number}{3} (\bibinfo{year}{2007}),
  \bibinfo{pages}{90--95}.
\newblock
\urldef\tempurl%
\url{https://doi.org/10.1109/MCSE.2007.55}
\showDOI{\tempurl}


\bibitem[\protect\citeauthoryear{Jain and Arya}{Jain and Arya}{2013}]%
        {jain2013inverse}
\bibfield{author}{\bibinfo{person}{Sanjay Jain} {and} \bibinfo{person}{Nitin
  Arya}.} \bibinfo{year}{2013}\natexlab{}.
\newblock \showarticletitle{An inverse capacitated transportation problem}.
\newblock \bibinfo{journal}{\emph{IOSR Journal of Mathematics}}
  \bibinfo{volume}{5}, \bibinfo{number}{4} (\bibinfo{year}{2013}),
  \bibinfo{pages}{24--27}.
\newblock


\bibitem[\protect\citeauthoryear{Johnson}{Johnson}{1970}]%
        {johnson1970positive}
\bibfield{author}{\bibinfo{person}{CHARLES~R Johnson}.}
  \bibinfo{year}{1970}\natexlab{}.
\newblock \showarticletitle{Positive definite matrices}.
\newblock \bibinfo{journal}{\emph{The American Mathematical Monthly}}
  \bibinfo{volume}{77}, \bibinfo{number}{3} (\bibinfo{year}{1970}),
  \bibinfo{pages}{259--264}.
\newblock


\bibitem[\protect\citeauthoryear{Keshavarz, Wang, and Boyd}{Keshavarz
  et~al\mbox{.}}{2011}]%
        {keshavarz2011imputing}
\bibfield{author}{\bibinfo{person}{Arezou Keshavarz}, \bibinfo{person}{Yang
  Wang}, {and} \bibinfo{person}{Stephen Boyd}.}
  \bibinfo{year}{2011}\natexlab{}.
\newblock \showarticletitle{Imputing a convex objective function}. In
  \bibinfo{booktitle}{\emph{2011 IEEE International Symposium on Intelligent
  Control}}. IEEE, \bibinfo{pages}{613--619}.
\newblock


\bibitem[\protect\citeauthoryear{Konstantakopoulos, Barkan, He, Veeravalli,
  Liu, and Spanos}{Konstantakopoulos et~al\mbox{.}}{2019}]%
        {konstantakopoulos2019deep}
\bibfield{author}{\bibinfo{person}{Ioannis~C Konstantakopoulos},
  \bibinfo{person}{Andrew~R Barkan}, \bibinfo{person}{Shiying He},
  \bibinfo{person}{Tanya Veeravalli}, \bibinfo{person}{Huihan Liu}, {and}
  \bibinfo{person}{Costas Spanos}.} \bibinfo{year}{2019}\natexlab{}.
\newblock \showarticletitle{A deep learning and gamification approach to
  improving human-building interaction and energy efficiency in smart
  infrastructure}.
\newblock \bibinfo{journal}{\emph{Applied energy}}  \bibinfo{volume}{237}
  (\bibinfo{year}{2019}), \bibinfo{pages}{810--821}.
\newblock


\bibitem[\protect\citeauthoryear{Konstantakopoulos, Ratliff, Jin, Sastry, and
  Spanos}{Konstantakopoulos et~al\mbox{.}}{2017}]%
        {konstantakopoulos2017robust}
\bibfield{author}{\bibinfo{person}{Ioannis~C Konstantakopoulos},
  \bibinfo{person}{Lillian~J Ratliff}, \bibinfo{person}{Ming Jin},
  \bibinfo{person}{S~Shankar Sastry}, {and} \bibinfo{person}{Costas~J Spanos}.}
  \bibinfo{year}{2017}\natexlab{}.
\newblock \showarticletitle{A robust utility learning framework via inverse
  optimization}.
\newblock \bibinfo{journal}{\emph{IEEE Transactions on Control Systems
  Technology}} \bibinfo{volume}{26}, \bibinfo{number}{3}
  (\bibinfo{year}{2017}), \bibinfo{pages}{954--970}.
\newblock


\bibitem[\protect\citeauthoryear{Kuleshov and Schrijvers}{Kuleshov and
  Schrijvers}{2015}]%
        {kuleshov2015inverse}
\bibfield{author}{\bibinfo{person}{Volodymyr Kuleshov} {and}
  \bibinfo{person}{Okke Schrijvers}.} \bibinfo{year}{2015}\natexlab{}.
\newblock \showarticletitle{Inverse game theory: Learning utilities in succinct
  games}. In \bibinfo{booktitle}{\emph{International Conference on Web and
  Internet Economics}}. Springer, \bibinfo{pages}{413--427}.
\newblock


\bibitem[\protect\citeauthoryear{LeBlanc, Morlok, and Pierskalla}{LeBlanc
  et~al\mbox{.}}{1975}]%
        {leblanc1975efficient}
\bibfield{author}{\bibinfo{person}{Larry~J LeBlanc}, \bibinfo{person}{Edward~K
  Morlok}, {and} \bibinfo{person}{William~P Pierskalla}.}
  \bibinfo{year}{1975}\natexlab{}.
\newblock \showarticletitle{An efficient approach to solving the road network
  equilibrium traffic assignment problem}.
\newblock \bibinfo{journal}{\emph{Transportation research}}
  \bibinfo{volume}{9}, \bibinfo{number}{5} (\bibinfo{year}{1975}),
  \bibinfo{pages}{309--318}.
\newblock


\bibitem[\protect\citeauthoryear{LLC}{LLC}{2020}]%
        {gurobi_documentation}
\bibfield{author}{\bibinfo{person}{Gurobi~Optimization LLC}.}
  \bibinfo{year}{2020}\natexlab{}.
\newblock \bibinfo{title}{Documentation}.
\newblock \bibinfo{howpublished}{\url{https://www.gurobi.com/documentation/}}.
\newblock


\bibitem[\protect\citeauthoryear{Lustig, Mulvey, and Carpenter}{Lustig
  et~al\mbox{.}}{1991}]%
        {lustig1991formulating}
\bibfield{author}{\bibinfo{person}{Irvin~J Lustig}, \bibinfo{person}{John~M
  Mulvey}, {and} \bibinfo{person}{Tamra~J Carpenter}.}
  \bibinfo{year}{1991}\natexlab{}.
\newblock \showarticletitle{Formulating two-stage stochastic programs for
  interior point methods}.
\newblock \bibinfo{journal}{\emph{Operations Research}} \bibinfo{volume}{39},
  \bibinfo{number}{5} (\bibinfo{year}{1991}), \bibinfo{pages}{757--770}.
\newblock


\bibitem[\protect\citeauthoryear{Marcotte and Patriksson}{Marcotte and
  Patriksson}{2007}]%
        {marcotte2007traffic}
\bibfield{author}{\bibinfo{person}{Patrice Marcotte} {and}
  \bibinfo{person}{Michael Patriksson}.} \bibinfo{year}{2007}\natexlab{}.
\newblock \showarticletitle{Traffic equilibrium}.
\newblock \bibinfo{journal}{\emph{Handbooks in Operations Research and
  Management Science}}  \bibinfo{volume}{14} (\bibinfo{year}{2007}),
  \bibinfo{pages}{623--713}.
\newblock


\bibitem[\protect\citeauthoryear{MATLAB}{MATLAB}{2020}]%
        {MATLAB:2020a}
\bibfield{author}{\bibinfo{person}{MATLAB}.} \bibinfo{year}{2020}\natexlab{}.
\newblock \bibinfo{booktitle}{\emph{Version 9.8.0.1323502 (R2020a)}}.
\newblock \bibinfo{publisher}{The MathWorks, Inc.}, \bibinfo{address}{Natick,
  Massachusetts}.
\newblock


\bibitem[\protect\citeauthoryear{McKinney et~al\mbox{.}}{McKinney
  et~al\mbox{.}}{2010}]%
        {mckinney2010data}
\bibfield{author}{\bibinfo{person}{Wes McKinney} {et~al\mbox{.}}}
  \bibinfo{year}{2010}\natexlab{}.
\newblock \showarticletitle{Data structures for statistical computing in
  python}. In \bibinfo{booktitle}{\emph{Proceedings of the 9th Python in
  Science Conference}}, Vol.~\bibinfo{volume}{445}. Austin, TX,
  \bibinfo{pages}{51--56}.
\newblock


\bibitem[\protect\citeauthoryear{Nabetani, Tseng, and Fukushima}{Nabetani
  et~al\mbox{.}}{2011}]%
        {nabetani2011parametrized}
\bibfield{author}{\bibinfo{person}{Koichi Nabetani}, \bibinfo{person}{Paul
  Tseng}, {and} \bibinfo{person}{Masao Fukushima}.}
  \bibinfo{year}{2011}\natexlab{}.
\newblock \showarticletitle{Parametrized variational inequality approaches to
  generalized Nash equilibrium problems with shared constraints}.
\newblock \bibinfo{journal}{\emph{Computational Optimization and Applications}}
  \bibinfo{volume}{48}, \bibinfo{number}{3} (\bibinfo{year}{2011}),
  \bibinfo{pages}{423--452}.
\newblock


\bibitem[\protect\citeauthoryear{Nekipelov, Syrgkanis, and Tardos}{Nekipelov
  et~al\mbox{.}}{2015}]%
        {nekipelov2015econometrics}
\bibfield{author}{\bibinfo{person}{Denis Nekipelov}, \bibinfo{person}{Vasilis
  Syrgkanis}, {and} \bibinfo{person}{Eva Tardos}.}
  \bibinfo{year}{2015}\natexlab{}.
\newblock \showarticletitle{Econometrics for learning agents}. In
  \bibinfo{booktitle}{\emph{Proceedings of the Sixteenth ACM Conference on
  Economics and Computation}}. \bibinfo{pages}{1--18}.
\newblock


\bibitem[\protect\citeauthoryear{Nguyen}{Nguyen}{2010}]%
        {nguyen2010application}
\bibfield{author}{\bibinfo{person}{Thai~Dung Nguyen}.}
  \bibinfo{year}{2010}\natexlab{}.
\newblock \emph{\bibinfo{title}{Application of robust and inverse optimization
  in transportation}}.
\newblock \bibinfo{thesistype}{Ph.D. Dissertation}.
  \bibinfo{school}{Massachusetts Institute of Technology}.
\newblock


\bibitem[\protect\citeauthoryear{Oliphant}{Oliphant}{2006}]%
        {numpy_citation}
\bibfield{author}{\bibinfo{person}{Travis~E. Oliphant}.}
  \bibinfo{year}{2006}\natexlab{}.
\newblock \bibinfo{booktitle}{\emph{A guide to NumPy}}.
\newblock \bibinfo{publisher}{USA: Trelgol Publishing}.
\newblock


\bibitem[\protect\citeauthoryear{Orda, Rom, and Shimkin}{Orda
  et~al\mbox{.}}{1993}]%
        {orda1993competitive}
\bibfield{author}{\bibinfo{person}{Ariel Orda}, \bibinfo{person}{Raphael Rom},
  {and} \bibinfo{person}{Nahum Shimkin}.} \bibinfo{year}{1993}\natexlab{}.
\newblock \showarticletitle{Competitive routing in multiuser communication
  networks}.
\newblock \bibinfo{journal}{\emph{IEEE/ACM Transactions on networking}}
  \bibinfo{volume}{1}, \bibinfo{number}{5} (\bibinfo{year}{1993}),
  \bibinfo{pages}{510--521}.
\newblock


\bibitem[\protect\citeauthoryear{Ortega and Rheinboldt}{Ortega and
  Rheinboldt}{2000}]%
        {ortega2000iterative}
\bibfield{author}{\bibinfo{person}{James~M Ortega} {and}
  \bibinfo{person}{Werner~C Rheinboldt}.} \bibinfo{year}{2000}\natexlab{}.
\newblock \bibinfo{booktitle}{\emph{Iterative solution of nonlinear equations
  in several variables}}.
\newblock \bibinfo{publisher}{SIAM}.
\newblock


\bibitem[\protect\citeauthoryear{Peysakhovich, Kroer, and Lerer}{Peysakhovich
  et~al\mbox{.}}{2019}]%
        {peysakhovich2019robust}
\bibfield{author}{\bibinfo{person}{Alexander Peysakhovich},
  \bibinfo{person}{Christian Kroer}, {and} \bibinfo{person}{Adam Lerer}.}
  \bibinfo{year}{2019}\natexlab{}.
\newblock \showarticletitle{Robust multi-agent counterfactual prediction}. In
  \bibinfo{booktitle}{\emph{Advances in Neural Information Processing
  Systems}}. \bibinfo{pages}{3077--3087}.
\newblock


\bibitem[\protect\citeauthoryear{Ratliff, Burden, and Sastry}{Ratliff
  et~al\mbox{.}}{2013}]%
        {ratliff2013characterization}
\bibfield{author}{\bibinfo{person}{Lillian~J Ratliff},
  \bibinfo{person}{Samuel~A Burden}, {and} \bibinfo{person}{S~Shankar Sastry}.}
  \bibinfo{year}{2013}\natexlab{}.
\newblock \showarticletitle{Characterization and computation of local Nash
  equilibria in continuous games}. In \bibinfo{booktitle}{\emph{2013 51st
  Annual Allerton Conference on Communication, Control, and Computing
  (Allerton)}}. IEEE, \bibinfo{pages}{917--924}.
\newblock


\bibitem[\protect\citeauthoryear{Ratliff, Jin, Konstantakopoulos, Spanos, and
  Sastry}{Ratliff et~al\mbox{.}}{2014}]%
        {ratliff2014social}
\bibfield{author}{\bibinfo{person}{Lillian~J Ratliff}, \bibinfo{person}{Ming
  Jin}, \bibinfo{person}{Ioannis~C Konstantakopoulos}, \bibinfo{person}{Costas
  Spanos}, {and} \bibinfo{person}{S~Shankar Sastry}.}
  \bibinfo{year}{2014}\natexlab{}.
\newblock \showarticletitle{Social game for building energy efficiency:
  Incentive design}. In \bibinfo{booktitle}{\emph{2014 52nd Annual Allerton
  Conference on Communication, Control, and Computing (Allerton)}}. IEEE,
  \bibinfo{pages}{1011--1018}.
\newblock


\bibitem[\protect\citeauthoryear{Risanger, Fleten, and Gabriel}{Risanger
  et~al\mbox{.}}{2020}]%
        {risanger2020inverse}
\bibfield{author}{\bibinfo{person}{Simon Risanger}, \bibinfo{person}{Stein-Erik
  Fleten}, {and} \bibinfo{person}{Steven~A Gabriel}.}
  \bibinfo{year}{2020}\natexlab{}.
\newblock \showarticletitle{Inverse Equilibrium Analysis of Oligopolistic
  Electricity Markets}.
\newblock \bibinfo{journal}{\emph{IEEE Transactions on Power Systems}}
  \bibinfo{volume}{35}, \bibinfo{number}{6} (\bibinfo{year}{2020}),
  \bibinfo{pages}{4159--4166}.
\newblock


\bibitem[\protect\citeauthoryear{Rosen}{Rosen}{1965}]%
        {rosen1965existence}
\bibfield{author}{\bibinfo{person}{J~Ben Rosen}.}
  \bibinfo{year}{1965}\natexlab{}.
\newblock \showarticletitle{Existence and uniqueness of equilibrium points for
  concave n-person games}.
\newblock \bibinfo{journal}{\emph{Econometrica: Journal of the Econometric
  Society}} (\bibinfo{year}{1965}), \bibinfo{pages}{520--534}.
\newblock


\bibitem[\protect\citeauthoryear{Roughgarden}{Roughgarden}{[n.d.]}]%
        {AGT_routing_games}
\bibfield{author}{\bibinfo{person}{Tim Roughgarden}.}
  \bibinfo{year}{[n.d.]}\natexlab{}.
\newblock \showarticletitle{Routing Games}.
\newblock In \bibinfo{booktitle}{\emph{Algorithmic Game Theory}}.
  \bibinfo{publisher}{Cambridge University Press}.
\newblock


\bibitem[\protect\citeauthoryear{Saez-Gallego and Morales}{Saez-Gallego and
  Morales}{2017}]%
        {saez2017short}
\bibfield{author}{\bibinfo{person}{Javier Saez-Gallego} {and}
  \bibinfo{person}{Juan~M Morales}.} \bibinfo{year}{2017}\natexlab{}.
\newblock \showarticletitle{Short-term forecasting of price-responsive loads
  using inverse optimization}.
\newblock \bibinfo{journal}{\emph{IEEE Transactions on Smart Grid}}
  \bibinfo{volume}{9}, \bibinfo{number}{5} (\bibinfo{year}{2017}),
  \bibinfo{pages}{4805--4814}.
\newblock


\bibitem[\protect\citeauthoryear{Siri, Siri, and Sacone}{Siri
  et~al\mbox{.}}{2020}]%
        {siri2020progressive}
\bibfield{author}{\bibinfo{person}{Enrico Siri}, \bibinfo{person}{Silvia Siri},
  {and} \bibinfo{person}{Simona Sacone}.} \bibinfo{year}{2020}\natexlab{}.
\newblock \showarticletitle{A progressive traffic assignment procedure on
  networks affected by disruptive events}. In \bibinfo{booktitle}{\emph{2020
  European Control Conference (ECC)}}. IEEE, \bibinfo{pages}{130--135}.
\newblock


\bibitem[\protect\citeauthoryear{Stein and Sudermann-Merx}{Stein and
  Sudermann-Merx}{2018}]%
        {stein2018noncooperative}
\bibfield{author}{\bibinfo{person}{Oliver Stein} {and} \bibinfo{person}{Nathan
  Sudermann-Merx}.} \bibinfo{year}{2018}\natexlab{}.
\newblock \showarticletitle{The noncooperative transportation problem and
  linear generalized Nash games}.
\newblock \bibinfo{journal}{\emph{European Journal of Operational Research}}
  \bibinfo{volume}{266}, \bibinfo{number}{2} (\bibinfo{year}{2018}),
  \bibinfo{pages}{543--553}.
\newblock


\bibitem[\protect\citeauthoryear{Tan, Terekhov, and Delong}{Tan
  et~al\mbox{.}}{2020}]%
        {tan2020learning}
\bibfield{author}{\bibinfo{person}{Yingcong Tan}, \bibinfo{person}{Daria
  Terekhov}, {and} \bibinfo{person}{Andrew Delong}.}
  \bibinfo{year}{2020}\natexlab{}.
\newblock \showarticletitle{Learning linear programs from optimal decisions}.
\newblock \bibinfo{journal}{\emph{arXiv preprint arXiv:2006.08923}}
  (\bibinfo{year}{2020}).
\newblock


\bibitem[\protect\citeauthoryear{Thai and Bayen}{Thai and Bayen}{2017}]%
        {thai2017learnability}
\bibfield{author}{\bibinfo{person}{J{\'e}r{\^o}me Thai} {and}
  \bibinfo{person}{Alexandre~M Bayen}.} \bibinfo{year}{2017}\natexlab{}.
\newblock \showarticletitle{Learnability of edge cost functions in routing
  games}. In \bibinfo{booktitle}{\emph{2017 IEEE 56th Annual Conference on
  Decision and Control (CDC)}}. IEEE, \bibinfo{pages}{6422--6429}.
\newblock


\bibitem[\protect\citeauthoryear{Thai and Bayen}{Thai and Bayen}{2018}]%
        {thai2018imputing}
\bibfield{author}{\bibinfo{person}{J{\'e}r{\^o}me Thai} {and}
  \bibinfo{person}{Alexandre~M Bayen}.} \bibinfo{year}{2018}\natexlab{}.
\newblock \showarticletitle{Imputing a variational inequality function or a
  convex objective function: A robust approach}.
\newblock \bibinfo{journal}{\emph{J. Math. Anal. Appl.}} \bibinfo{volume}{457},
  \bibinfo{number}{2} (\bibinfo{year}{2018}), \bibinfo{pages}{1675--1695}.
\newblock


\bibitem[\protect\citeauthoryear{Thai, Hariss, and Bayen}{Thai
  et~al\mbox{.}}{2015}]%
        {thai2015multi}
\bibfield{author}{\bibinfo{person}{J{\'e}r{\^o}me Thai}, \bibinfo{person}{Rim
  Hariss}, {and} \bibinfo{person}{Alexandre Bayen}.}
  \bibinfo{year}{2015}\natexlab{}.
\newblock \showarticletitle{A multi-convex approach to latency inference and
  control in traffic equilibria from sparse data}. In
  \bibinfo{booktitle}{\emph{2015 American Control Conference (ACC)}}. IEEE,
  \bibinfo{pages}{689--695}.
\newblock


\bibitem[\protect\citeauthoryear{{Transportation Networks for Research Core
  Team}}{{Transportation Networks for Research Core Team}}{2020}]%
        {sioux_falls_data}
\bibfield{author}{\bibinfo{person}{{Transportation Networks for Research Core
  Team}}.} \bibinfo{year}{June 2020}\natexlab{}.
\newblock \bibinfo{title}{Transportation Networks for Research}.
\newblock
  \bibinfo{howpublished}{\url{https://github.com/bstabler/TransportationNetworks}}.
\newblock


\bibitem[\protect\citeauthoryear{{Virtanen}, {Gommers}, {Oliphant},
  {Haberland}, {Reddy}, {Cournapeau}, {Burovski}, {Peterson}, {Weckesser},
  {Bright}, {van der Walt}, {Brett}, {Wilson}, {Jarrod Millman}, {Mayorov},
  {Nelson}, {Jones}, {Kern}, {Larson}, {Carey}, {Polat}, {Feng}, {Moore}, {Vand
  erPlas}, {Laxalde}, {Perktold}, {Cimrman}, {Henriksen}, {Quintero}, {Harris},
  {Archibald}, {Ribeiro}, {Pedregosa}, {van Mulbregt}, and
  {Contributors}}{{Virtanen} et~al\mbox{.}}{2020}]%
        {2020SciPy-NMeth}
\bibfield{author}{\bibinfo{person}{Pauli {Virtanen}}, \bibinfo{person}{Ralf
  {Gommers}}, \bibinfo{person}{Travis~E. {Oliphant}}, \bibinfo{person}{Matt
  {Haberland}}, \bibinfo{person}{Tyler {Reddy}}, \bibinfo{person}{David
  {Cournapeau}}, \bibinfo{person}{Evgeni {Burovski}}, \bibinfo{person}{Pearu
  {Peterson}}, \bibinfo{person}{Warren {Weckesser}}, \bibinfo{person}{Jonathan
  {Bright}}, \bibinfo{person}{St{\'e}fan~J. {van der Walt}},
  \bibinfo{person}{Matthew {Brett}}, \bibinfo{person}{Joshua {Wilson}},
  \bibinfo{person}{K. {Jarrod Millman}}, \bibinfo{person}{Nikolay {Mayorov}},
  \bibinfo{person}{Andrew R.~J. {Nelson}}, \bibinfo{person}{Eric {Jones}},
  \bibinfo{person}{Robert {Kern}}, \bibinfo{person}{Eric {Larson}},
  \bibinfo{person}{CJ {Carey}}, \bibinfo{person}{{\.I}lhan {Polat}},
  \bibinfo{person}{Yu {Feng}}, \bibinfo{person}{Eric~W. {Moore}},
  \bibinfo{person}{Jake {Vand erPlas}}, \bibinfo{person}{Denis {Laxalde}},
  \bibinfo{person}{Josef {Perktold}}, \bibinfo{person}{Robert {Cimrman}},
  \bibinfo{person}{Ian {Henriksen}}, \bibinfo{person}{E.~A. {Quintero}},
  \bibinfo{person}{Charles~R {Harris}}, \bibinfo{person}{Anne~M. {Archibald}},
  \bibinfo{person}{Ant{\^o}nio~H. {Ribeiro}}, \bibinfo{person}{Fabian
  {Pedregosa}}, \bibinfo{person}{Paul {van Mulbregt}}, {and}
  \bibinfo{person}{SciPy 1.~0 {Contributors}}.}
  \bibinfo{year}{2020}\natexlab{}.
\newblock \showarticletitle{{SciPy 1.0: Fundamental Algorithms for Scientific
  Computing in Python}}.
\newblock \bibinfo{journal}{\emph{Nature Methods}}  \bibinfo{volume}{17}
  (\bibinfo{year}{2020}), \bibinfo{pages}{261--272}.
\newblock
\urldef\tempurl%
\url{https://doi.org/10.1038/s41592-019-0686-2}
\showDOI{\tempurl}


\bibitem[\protect\citeauthoryear{Walt, Colbert, and Varoquaux}{Walt
  et~al\mbox{.}}{2011}]%
        {numpy_citation_2}
\bibfield{author}{\bibinfo{person}{St{\'e}fan van~der Walt},
  \bibinfo{person}{S~Chris Colbert}, {and} \bibinfo{person}{Gael Varoquaux}.}
  \bibinfo{year}{2011}\natexlab{}.
\newblock \showarticletitle{The NumPy array: a structure for efficient
  numerical computation}.
\newblock \bibinfo{journal}{\emph{Computing in Science \& Engineering}}
  \bibinfo{volume}{13}, \bibinfo{number}{2} (\bibinfo{year}{2011}),
  \bibinfo{pages}{22--30}.
\newblock


\bibitem[\protect\citeauthoryear{Waugh, Ziebart, and Bagnell}{Waugh
  et~al\mbox{.}}{2011}]%
        {waugh2011computational}
\bibfield{author}{\bibinfo{person}{Kevin Waugh}, \bibinfo{person}{Brian~D
  Ziebart}, {and} \bibinfo{person}{J~Andrew Bagnell}.}
  \bibinfo{year}{2011}\natexlab{}.
\newblock \showarticletitle{Computational Rationalization: The Inverse
  Equilibrium Problem}.
\newblock \bibinfo{journal}{\emph{28th International Conference on Machine
  Learning}} (\bibinfo{year}{2011}).
\newblock


\bibitem[\protect\citeauthoryear{Waugh, Ziebart, and Bagnell}{Waugh
  et~al\mbox{.}}{2013}]%
        {waugh2013computational}
\bibfield{author}{\bibinfo{person}{Kevin Waugh}, \bibinfo{person}{Brian~D
  Ziebart}, {and} \bibinfo{person}{J~Andrew Bagnell}.}
  \bibinfo{year}{2013}\natexlab{}.
\newblock \showarticletitle{Computational rationalization: The inverse
  equilibrium problem}.
\newblock \bibinfo{journal}{\emph{arXiv preprint arXiv:1308.3506}}
  (\bibinfo{year}{2013}).
\newblock


\bibitem[\protect\citeauthoryear{Weisstein}{Weisstein}{urcea}]%
        {matrix_norm}
\bibfield{author}{\bibinfo{person}{Eric~W. Weisstein}.} \bibinfo{year}{From
  MathWorld--A Wolfram Web Resource}\natexlab{a}.
\newblock \bibinfo{title}{Matrix Norm}.
\newblock
  \bibinfo{howpublished}{https://mathworld.wolfram.com/MatrixNorm.html}.
\newblock


\bibitem[\protect\citeauthoryear{Weisstein}{Weisstein}{urceb}]%
        {PD_wolfram}
\bibfield{author}{\bibinfo{person}{Eric~W. Weisstein}.} \bibinfo{year}{From
  MathWorld--A Wolfram Web Resource}\natexlab{b}.
\newblock \bibinfo{title}{Positive Definite Matrix}.
\newblock
  \bibinfo{howpublished}{https://mathworld.wolfram.com/PositiveDefiniteMatrix.html}.
\newblock


\bibitem[\protect\citeauthoryear{Xu, Nourinejad, Lai, and Chow}{Xu
  et~al\mbox{.}}{2018}]%
        {xu2018network}
\bibfield{author}{\bibinfo{person}{Susan~Jia Xu}, \bibinfo{person}{Mehdi
  Nourinejad}, \bibinfo{person}{Xuebo Lai}, {and} \bibinfo{person}{Joseph~YJ
  Chow}.} \bibinfo{year}{2018}\natexlab{}.
\newblock \showarticletitle{Network learning via multiagent inverse
  transportation problems}.
\newblock \bibinfo{journal}{\emph{Transportation Science}}
  \bibinfo{volume}{52}, \bibinfo{number}{6} (\bibinfo{year}{2018}),
  \bibinfo{pages}{1347--1364}.
\newblock


\bibitem[\protect\citeauthoryear{Zhang, Su, Peng, and Yao}{Zhang
  et~al\mbox{.}}{2010}]%
        {zhang2010review}
\bibfield{author}{\bibinfo{person}{He Zhang}, \bibinfo{person}{Yuelong Su},
  \bibinfo{person}{Lihui Peng}, {and} \bibinfo{person}{Danya Yao}.}
  \bibinfo{year}{2010}\natexlab{}.
\newblock \showarticletitle{A review of game theory applications in
  transportation analysis}. In \bibinfo{booktitle}{\emph{2010 International
  Conference on Computer and Information Application}}. IEEE,
  \bibinfo{pages}{152--157}.
\newblock


\bibitem[\protect\citeauthoryear{Zhang and Paschalidis}{Zhang and
  Paschalidis}{2017}]%
        {zhang2017data}
\bibfield{author}{\bibinfo{person}{Jing Zhang} {and}
  \bibinfo{person}{Ioannis~Ch Paschalidis}.} \bibinfo{year}{2017}\natexlab{}.
\newblock \showarticletitle{Data-driven estimation of travel latency cost
  functions via inverse optimization in multi-class transportation networks}.
  In \bibinfo{booktitle}{\emph{2017 IEEE 56th Annual Conference on Decision and
  Control (CDC)}}. IEEE, \bibinfo{pages}{6295--6300}.
\newblock


\bibitem[\protect\citeauthoryear{Zhang, Pourazarm, Cassandras, and
  Paschalidis}{Zhang et~al\mbox{.}}{2018}]%
        {zhang2018price}
\bibfield{author}{\bibinfo{person}{Jing Zhang}, \bibinfo{person}{Sepideh
  Pourazarm}, \bibinfo{person}{Christos~G Cassandras}, {and}
  \bibinfo{person}{Ioannis~Ch Paschalidis}.} \bibinfo{year}{2018}\natexlab{}.
\newblock \showarticletitle{The price of anarchy in transportation networks:
  Data-driven evaluation and reduction strategies}.
\newblock \bibinfo{journal}{\emph{Proc. IEEE}} \bibinfo{volume}{106},
  \bibinfo{number}{4} (\bibinfo{year}{2018}), \bibinfo{pages}{538--553}.
\newblock


\bibitem[\protect\citeauthoryear{Zhang, Jian, and Tang}{Zhang
  et~al\mbox{.}}{2011}]%
        {zhang2011inverse}
\bibfield{author}{\bibinfo{person}{Jian-zhong Zhang}, \bibinfo{person}{Jin-bao
  Jian}, {and} \bibinfo{person}{Chun-ming Tang}.}
  \bibinfo{year}{2011}\natexlab{}.
\newblock \showarticletitle{Inverse problems and solution methods for a class
  of nonlinear complementarity problems}.
\newblock \bibinfo{journal}{\emph{Computational Optimization and Applications}}
  \bibinfo{volume}{49}, \bibinfo{number}{2} (\bibinfo{year}{2011}),
  \bibinfo{pages}{271--297}.
\newblock


\end{thebibliography}


\appendix

\section{Supplementary Materials}

\subsection{Variable Number Calculations for Lemma \ref{complexity_lemma}}\label{appendix:variable_number}

In this subsection, we will provide the exact variable counts for problem (\ref{empirical_residual_model_1}) as the grid size increases from 2x2 to 3x3 to 4x4, and so on, as well as a more general variable count for graphs involving an arbitrary number of arcs.  These variable counts are polynomial in the problem input size and, when compared with the polynomial-time complexity of solving linear programs, supports Lemma~\ref{complexity_lemma} in the main paper.   We will define $N$ as the number of players and $m$ as the total number of nodes.

\hspace{1em}

\noindent\textbf{Grid:}  We first focus on the Grid family of networks, as used in Section~\ref{sec:experiments} in the main paper.  We note the following:
\begin{subequations}
\begin{equation}
    \text{Number of Origin-Destination Pairs: } (2) \binom{m}{2}
\end{equation}
\begin{equation}
    \text{Number of Arcs in a }\sqrt{m}x\sqrt{m}\text{ Grid: } (\sqrt{m} - 1)(\sqrt{m})(2)(2) 
\end{equation}
\end{subequations}

\noindent For the same costs across all players, we have to consider the variables $\text{diag}(\mathcal{C}),\bar{c},v_i^k,u_i^k,\bar{u}^k,\ \forall i, k$ along with the variables utilized in creating the 1-norms for the residual model.  This variable count comes out to be for our code:
\begin{equation}
    13Nm^3 - 12 Nm^{5/2} - 13 N m^2 + 12 N m^{3/2} + 16 m^3 - 16 m^{5/2} - 16 m^2 + 16 m^{3/2}
\end{equation}

\noindent For different costs across all players, we consider the variables $\text{diag}(\mathcal{C}_i),\bar{c}_i,v_i^k,u_i^k,\bar{u}^k,\ \forall i, k$ along with the variables utilized in creating the 1 norms for the residual model.  This variable count comes out to be for our code:
\begin{equation}
    21 N m^3 - 20 N m^{5/2} - 21 N m^2 + 20 N m^{3/2} + 8 m^3 - 8 m^{5/2} - 8 m^{2} + 8m^{3/2}
\end{equation}

\noindent Therefore, in big O notation, the order of both variable counts is $\mathcal{O}(N m^3)$.

\hspace{1em}

\noindent\textbf{General graphs:}  For any graph, we now define $a$ as the number of arcs, with $N$ as the number of players and $m$ as the total number of nodes.  Again, the number of OD pairs will be $(2) \binom{m}{2}$.  The variable count for the same costs across players is for our code:
\begin{equation}
    3aNm^2 - 3aNm + m^3 N - m^2 N + 4 am^2 - 4am
\end{equation}

\noindent The variable count for not the same costs across all players is for our code:

\begin{equation}
    5aNm^2 - 5aNm + m^3 N - m^2 N + 2am^2 - 2am
\end{equation}

\noindent Therefore, in big O notation, the order of both variable counts is $\mathcal{O}(aNm^2)$.


\subsection{Proof of Lemma \ref{thm:strong-monotonicity}} \label{appendix:strong-monotonicity-proof}

\begin{proof}
$\mathbf{F}$ is defined for (\ref{new_GNE}) as follows:
\begin{equation}
\mathbf{F}(\mathbf{x}) = 
\begin{bmatrix}
    2 \mathcal{C}_1 x_1 + \mathcal{C}_1 x_2 + ... + \mathcal{C}_1 x_N + \bar{c}_1 \\
    \mathcal{C}_2 x_1 + 2 \mathcal{C}_2 x_2 + ... + \mathcal{C}_2 x_N + \bar{c}_2\\
    \vdots \\
    \mathcal{C}_N x_1 + \mathcal{C}_N x_2 + ... + 2\mathcal{C}_N x_N + \bar{c}_N
\end{bmatrix}
\end{equation}

\noindent In order to prove strong monotonicity, we must show that there exists a scalar $\alpha > 0$ such that

\begin{equation}
    \left(\mathbf{F}(\mathbf{x}) - \mathbf{F}(\mathbf{y}) \right)^T (\mathbf{x}-\mathbf{y}) \geq \alpha ||\mathbf{x}-\mathbf{y}||^2\  \forall \mathbf{x},\mathbf{y} \in \mathbf{X},\ \mathbf{x} \neq \mathbf{y}.
\end{equation}

\noindent Through some algebra, it is not difficult to show that:
\begin{equation}
    \left(\mathbf{F}(\mathbf{x}) - \mathbf{F}(\mathbf{y}) \right)^T (\mathbf{x}-\mathbf{y}) = (\mathbf{x}-\mathbf{y})^T
    \begin{bmatrix}
        2 \mathcal{C}_1 & \mathcal{C}_1 & \hdots & \mathcal{C}_1 \\ \mathcal{C}_2 & 2 \mathcal{C}_2 & \hdots & \mathcal{C}_2 \\ \vdots & \vdots & \ddots & \vdots \\
        \mathcal{C}_N & \mathcal{C}_N & \hdots & 2 \mathcal{C}_N
    \end{bmatrix}^T (\mathbf{x}-\mathbf{y})
\end{equation}

\noindent If we assume that $\mathcal{C}_1 = \mathcal{C}_2 = ... = \mathcal{C}_N = \mathcal{C}$, we have 

\begin{equation}\label{equated_C_proof}
    = (\mathbf{x}-\mathbf{y})^T
    \begin{bmatrix}
        2 \mathcal{C} & \mathcal{C} & \hdots & \mathcal{C} \\ \mathcal{C} & 2 \mathcal{C} & \hdots & \mathcal{C} \\ \vdots & \vdots & \ddots & \vdots \\
        \mathcal{C} & \mathcal{C} & \hdots & 2 \mathcal{C}
    \end{bmatrix} (\mathbf{x}-\mathbf{y})
\end{equation}



\noindent From Proposition 2.2.10 in \citet{cottle_lcp}, we know that, for symmetric, real valued matrices $M$ and the smallest eigenvalue of $M$ referenced as $\lambda_1$, we can write $\lambda_1 ||x||^2 \leq x^T M x,\ \forall x$.  Therefore, to prove the matrix in (\ref{equated_C_proof}) is positive definite, we simply need to prove that its smallest eigenvalue is positive.  We begin this process by splitting the matrix in (\ref{equated_C_proof}) into two pieces
\begin{equation}
    A = \begin{bmatrix}
        \mathcal{C} & 0 & \hdots & 0 \\ 0 & \mathcal{C} & \hdots & 0 \\ \vdots & \vdots & \ddots & \vdots \\
        0 & 0 & \hdots & \mathcal{C}
    \end{bmatrix},\ B = \begin{bmatrix}
        \mathcal{C} & \mathcal{C} & \hdots & \mathcal{C} \\ \mathcal{C} & \mathcal{C} & \hdots & \mathcal{C} \\ \vdots & \vdots & \ddots & \vdots \\
        \mathcal{C} & \mathcal{C} & \hdots & \mathcal{C}
    \end{bmatrix}
\end{equation}

\noindent We state that $A+B = M$, with M as the original matrix, and we note that $A,B,M \in \mathbb{R}^{nN\times nN}$, with $n$ as the number of arcs and $N$ as the number of players.  First, we prove that the eigenvalues of $B$ are $\{ 0,...,0, N(eig(C))\}$.  For this proof, we notice that the eigenvalue-eigenvector equation for $B$ is 
\begin{equation}
    B z = \lambda z
\end{equation}
\begin{equation}\label{extended_eig_for_proof}
    \mathcal{C} z_1 + ... + \mathcal{C} z_N = \lambda z_i,\ i=1,...,N
\end{equation}

\noindent such that $z_i \in \mathbb{R}^n,\ \forall i$ (not all $z_i = 0$) and $\lambda \in \mathbb{R}$.  Because $\forall i$ the left-hand sides of (\ref{extended_eig_for_proof}) are the same, we can equate the right hand sides to say:
\begin{equation}
    \lambda z_1 = \lambda z_2 = ... = \lambda z_N
\end{equation}

\noindent This provides two cases for the eigenvalues.  Either $\lambda = 0$, or the $z_i$ are such that 
\begin{equation}
    z_1 = z_2 = ... = z_N = w \neq 0
\end{equation}

\noindent for some $w \in \mathbb{R}^n$.  This $w$ can be substituted back into the equation in (\ref{extended_eig_for_proof}) to say 
\begin{equation}
    N\  \mathcal{C} w = \lambda w
\end{equation}

\noindent which means the $\lambda$ are the eigenvalues of $N \mathcal{C}$ and, since we already know that $\mathcal{C}$ is a positive diagonal matrix, the eigenvalues are just those entries on the diagonal multiplied by $N$.  Because these diagonal values are positive and $N$ is positive, then the resultant eigenvalues are positive.  

Next, we use this information to prove that $M$ is a positive definite matrix. Since we know that $A$ and $B$ are real, symmetric matrices, Weyl's Inequality \cite[Theorem 4.3.1]{horn1985matrix} states:
\begin{equation}
    \lambda_1(A) + \lambda_1(B) \leq \lambda_1(A+B) = \lambda_1(M)
\end{equation}

\noindent We know from above that $\lambda_1(B) = 0$.  We also know that $\lambda_1(A) > 0$ because it is a diagonal matrix with the $\mathcal{C}$ matrices on its diagonal and, since $\mathcal{C}$ has all positive values, then we know all of the eigenvalues of $A$ are positive.  Consequently, $\lambda_1(A) + \lambda_1(B) > 0$ which means $\lambda_1(M) > 0$.  
As a result, we can write 
\begin{equation}
    (\mathbf{x}-\mathbf{y})^T
    \begin{bmatrix}
        2 \mathcal{C} & \mathcal{C} & \hdots & \mathcal{C} \\ \mathcal{C} & 2 \mathcal{C} & \hdots & \mathcal{C} \\ \vdots & \vdots & \ddots & \vdots \\
        \mathcal{C} & \mathcal{C} & \hdots & 2 \mathcal{C}
    \end{bmatrix} (\mathbf{x}-\mathbf{y}) \geq \lambda_1(M) ||\mathbf{x}-\mathbf{y}||^2
\end{equation}


\noindent and we know $\lambda_1(M)$ is a positive number.  Thus, we have proven strong monotonicity of $\mathbf{F}$.
\end{proof}

\subsection{Counterexample and MATLAB Code for Lemma \ref{lemma:not-all-C-equal}}\label{appendix_lemma:not-all-C-equal}

We will provide a counter example to demonstrate this lemma.  In MATLAB, if we set the \url{rng} seed to 1 with the \url{'twister'} option, we draw four arcs using the \url{rand} function and multiply these numbers by 1000.  We then form a $\mathcal{C}$ matrix by putting these four numbers along the diagonal.  Next, we form the following matrix:
\begin{equation}
    A = \begin{bmatrix}
    2\mathcal{C} & \mathcal{C} & \mathcal{C} & \mathcal{C} \\ (500.1)\mathcal{C} & 2(500.1)\mathcal{C} & (500.1)\mathcal{C} & (500.1)\mathcal{C} \\ (600.7)\mathcal{C} & (600.7)\mathcal{C} & 2(600.7)\mathcal{C} & (600.7)\mathcal{C} \\ (700.8)\mathcal{C} & (700.8)\mathcal{C} & (700.8)\mathcal{C} & 2(700.8)\mathcal{C}
    \end{bmatrix}
\end{equation}

\noindent Therefore, we see that we have formed a four player matrix game in which all of the players do have different costs due to the different factors multiplying the $\mathcal{C}$ matrices.  According to \cite{johnson1970positive,PD_wolfram}, a matrix with real values $A$ is positive definite if and only if $\frac{1}{2}(A+A^T)$ is positive definite.  We can check the smallest eigenvalue of this resultant matrix to find out if it is positive definite.  Using MATLAB's \url{eig} function, we discover that the smallest eigenvalue of the matrix $\frac{1}{2}(A+A^T)$ is -7.7307e+04.  Therefore, $A$ is not positive definite, thus demonstrating that, when the costs are not the same across all players, we are not guaranteed to have a strongly monotone $\mathbf{F}$ as defined by (\ref{specific_QVI_formulation}).  See below for MATLAB code.

This is the MATLAB code utilized for the counterexample for Lemma \ref{lemma:not-all-C-equal}:

\begin{verbatim}
rng(1,'twister'); 
v=1000*rand(1,4);
C=diag(v); 
M=[2*C  C   C   C
   500.1*C  2*500.1*C   500.1*C     500.1*C
   600.7*C  600.7*C  2*600.7*C      600.7*C
   700.8*C   700.8*C    700.8*C    2*700.8*C];
w=eig(0.5*(M+M'));
minval=min(w)
\end{verbatim}

\subsection{Additional Experimental Set-Up Details}\label{appendix:experimental_details}

The PATH solver \cite{dirkse1995path,ferris2000complementarity} in GAMS generates the data used in the inverse optimization residual model \citep{keshavarz2011imputing,ratliff2014social}.  The only default altered for the PATH solver was the tolerance threshold for the solver to finish, which changed from 1e-6 to 1e-8.  In the scripts, the PATH solver can begin at a few different starting points in case one of the start points fails due to solver error.  For the case in which the costs are the same across all players, $\mathbf{F}$ defined by (\ref{specific_QVI_formulation}) is strictly monotone, so the problem has a unique solution, which means the starting point does not matter.  For the case in which the costs are not the same across all players, multiple starting points are still provided to ensure that a solution is obtained to the problem.  Interestingly, all of the different cost matrices for our experiments are positive definite, except for one in grid size 5x5 and player number 10.\footnote{In different costs, we use slightly different starting point options between when we run the simulation for the original costs and when we run the simulation for the IO costs.  This doesn't appear to affect the results, as will be seen in the next few subsections.  We also did a check for $\alpha=10$, grid 5x5, and $N=10$ with the start points set to the same values for the original costs and IO costs, and the resulting errors were the same as before.}  Then, the \url{pyomo} \cite{hart2011pyomo,hart2017mathematical} package in Python is utilized to construct the inverse optimization residual model to find the parameterizations.  The Gurobi solver \cite{gurobi_citation} is utilized to solve the pyomo models, and the BarQCPConvTol and the BarConvTol parameters are set to 1e-14.  The documentation for BarConvTol states that smaller values for this parameter can lead to ``a more accurate solution'' \cite{barcovtol_gurobi}.  The experiments are run on machines with 4 cores and 32 GB of RAM.

The original costs under which the simulation data is generated have to be selected, and this was done randomly. MATLAB \cite{MATLAB:2020a} was utilized to generate the random sets of numbers needed, specifically the \url{unifrnd} random number function with 5 as the \url{rng} seed. This function draws from a uniform distribution the original costs for the $\mathcal{C}_i$ and $\bar{c}_i$ parameters in the simulation model outlined in (\ref{new_MCP}).  For the same costs across all players set-up, one $\mathcal{C}$ and one $\bar{c}$ are drawn but, when the costs vary across all the players, a $\mathcal{C}_i$ and a $\bar{c}_i$ are drawn for each player $i$.  For each experimental set up with a specified graph, number of players, and $\alpha$ level (the capacity level for the joint constraint), 10 trials are run.

\subsection{Positive Definiteness of Different Costs Matrices with MATLAB rng(5)}\label{appendix:different_cost_eigenvalues}

In Figure \ref{appendix-fig:minimum_eigenvalues_arc_76}, we plot a sampling of the minimum eigenvalues for the ``symmetric part'' \cite{PD_wolfram} of random matrices of the form:

\begin{equation}
    A = \begin{bmatrix}
        2 \mathcal{C}_1 & \mathcal{C}_1 & \hdots & \mathcal{C}_1 \\ \mathcal{C}_2 & 2 \mathcal{C}_2 & \hdots & \mathcal{C}_2 \\ \vdots & \vdots & \ddots & \vdots \\
        \mathcal{C}_N & \mathcal{C}_N & \hdots & 2 \mathcal{C}_N
    \end{bmatrix}
\end{equation}

\noindent in which $\mathcal{C}_i$ are diagonal matrices whose diagonals are chosen as random numbers from the uniform distribution from 1 to 5 using the MATLAB unifrnd function.  We choose to set the arc number to 76 (same number as the Sioux Falls arc number), and we gradually increase the number of players $N$ from 2 to 15.  We do use the seed of 5 for rng but, since we draw all of the numbers in one continuous session, the minimum eigenvalues here do not necessarily match the eigenvalues of the matrices that correspond with our experiments.  As discussed in Appendix \ref{appendix_lemma:not-all-C-equal} and according to \cite{johnson1970positive,PD_wolfram}, we can check the positive definiteness of the matrix $A$ by finding the minimum eigenvalue of $0.5(A+A^T)$, which is what we do for the 10 random matrices corresponding with each player setting.  In return, we obtain Figure \ref{appendix-fig:minimum_eigenvalues_arc_76}, and we see that, as player number increases, there is more and more of a chance that the minimum eigenvalue is negative, thus making the matrix not positive definite.

This is important because we can only guarantee unique solutions to our problem when the $A$ matrix is positive definite, as explained in Section \ref{sec:theory-QVI}.  This allows us to have multiple starting points in our code for the simulation experiments.

\begin{figure}[H]
\centering
\includegraphics[height=0.3\textheight,keepaspectratio]{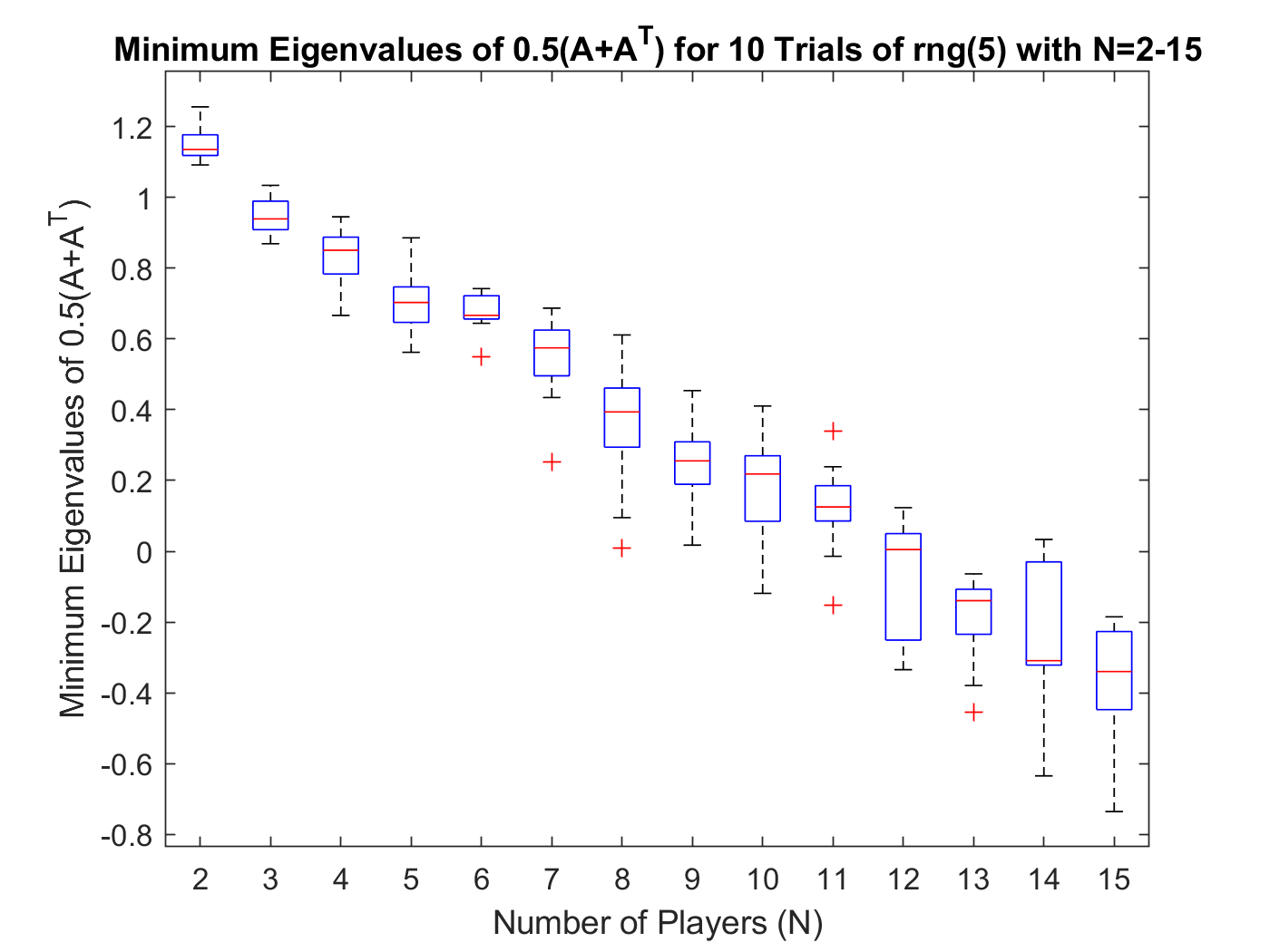}
\caption{Minimum Eigenvalues for 10 Trials, $N=2-15$, rng(5), and Arc Number=76, which is the same number of arcs as Sioux Falls}\label{appendix-fig:minimum_eigenvalues_arc_76}
\end{figure}

\subsection{Objective Function Values for the Experiment Groups}\label{appendix:obj_func_for_experiments}

Overall, it is important to remember that, for all the experiments, the objective function values are small, on the order of 1e-6 or lower.  However, it is also relevant to identify patterns where present.  For experiment group 1, in Figure \ref{fig:obj_func_experiment_1}, increasing grid size sometimes leads to a higher median objective function value in certain subsets of the experiments, including when there are 2 players and $\alpha=1$, when there are 5 players and $\alpha=2.5$, and when there are 10 players and $\alpha=5$.  These are all subsets in which alpha was cut in half according to the number of players present, which indicates that there could be a mild trend of increasing objective function values as grid size increases when the problem is more constrained.  Overall, however, the objective function values are small, with the highest outlier in terms of absolute value being a little over 2.5e-6 in the case of a 5x5 grid with 10 players and $\alpha=5$.\footnote{Note that there are a few negative values, but this isn't concerning because their corresponding flow errors are still low.}  For experiment group 2, in Figure \ref{fig:obj_func_experiment_2}, the objective values are overall quite low, with one outlier for 10 players and $\alpha=5$ above 8e-6.  They over all appear comparable to the grid objective function values once the scales of the two graphs are taken into account.\footnote{There do again appear to be some negative values but, again, this isn't concerning because their corresponding flow errors are still low.}  For experiment group 3, in Figure \ref{fig:obj_func_experiment_3}, although the objective function values of the inverse optimization residual model are on the order of 1e-6, the median values under the $\alpha = (0.5)N$ capacity are bigger than the $\alpha=N$ median values.  One theory for this trend is that the more constrained problems are more difficult to solve and hence lead to larger objective function values.  The ranges for these objective function values is about 1.5e-6 higher in the positive direction than the corresponding range for the values in Experiment group 1 (see Figure \ref{fig:obj_func_experiment_1}).  For experiment group 4, Figure \ref{fig:obj_func_experiment_4} demonstrates that most of the values fall below 1e-6, except for the $N=5, \alpha = 2.5$ experiment, whose maximum is close to 4e-6.  This does appear to mirror the the objective function values for experiment group 2 in terms of the general range, which also fell below 1e-6.

\begin{figure}[H]
\centering
\includegraphics[height=0.3\textheight,keepaspectratio]{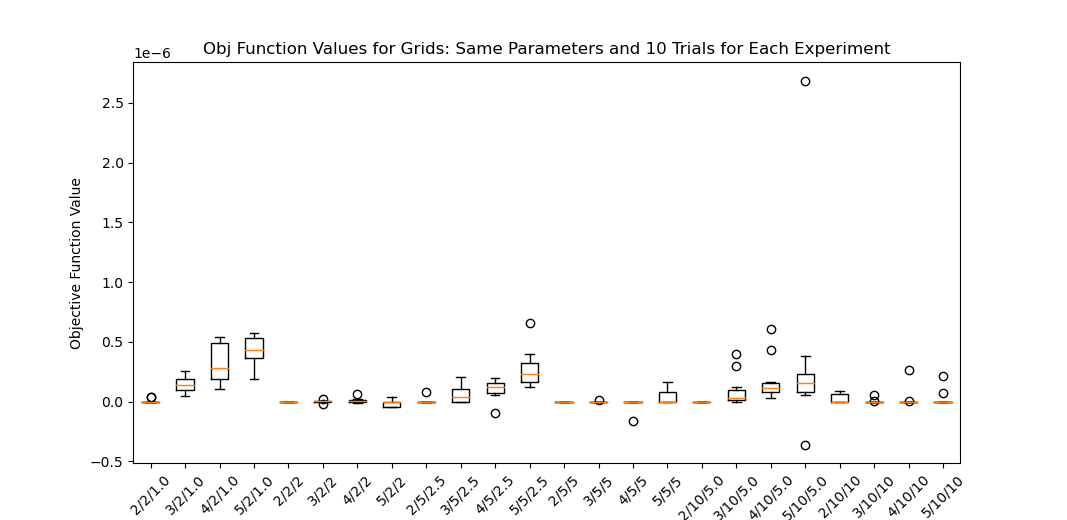}
\caption{Objective Function Values for Experiment Group 1: The labeling at the bottom of the graph indicates attributes of the boxplot, specifically the Grid Size/Number of Players/Alpha Value }\label{fig:obj_func_experiment_1}
\end{figure}

\begin{figure}[H]
\centering
\includegraphics[height=0.3\textheight,keepaspectratio]{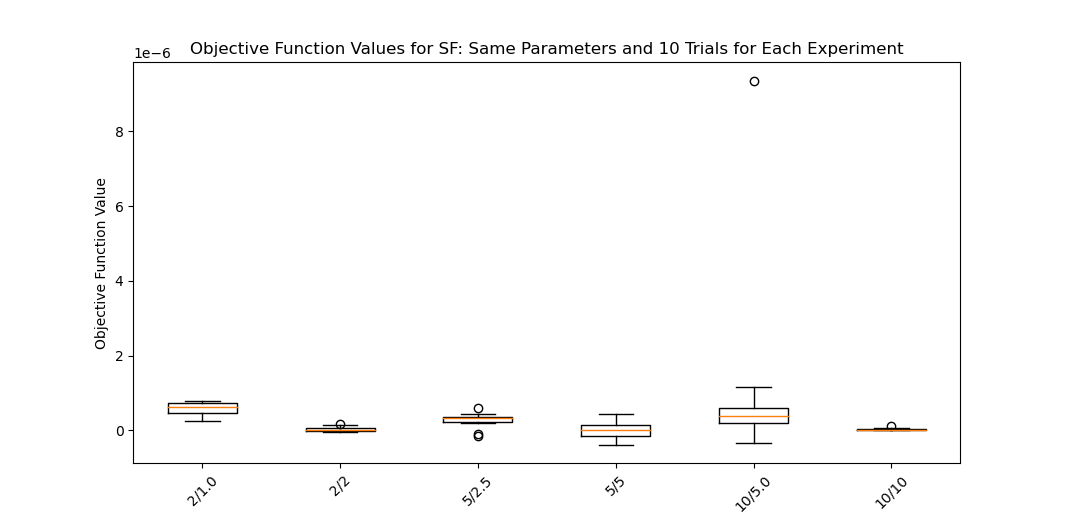}
\caption{Objective Function Values for Experiment Group 2: The labeling at the bottom of the graph indicates attributes of the boxplot, specifically the Number of Players/Alpha Value }\label{fig:obj_func_experiment_2}
\end{figure}

\begin{figure}[H]
\centering
\includegraphics[height=0.3\textheight,keepaspectratio]{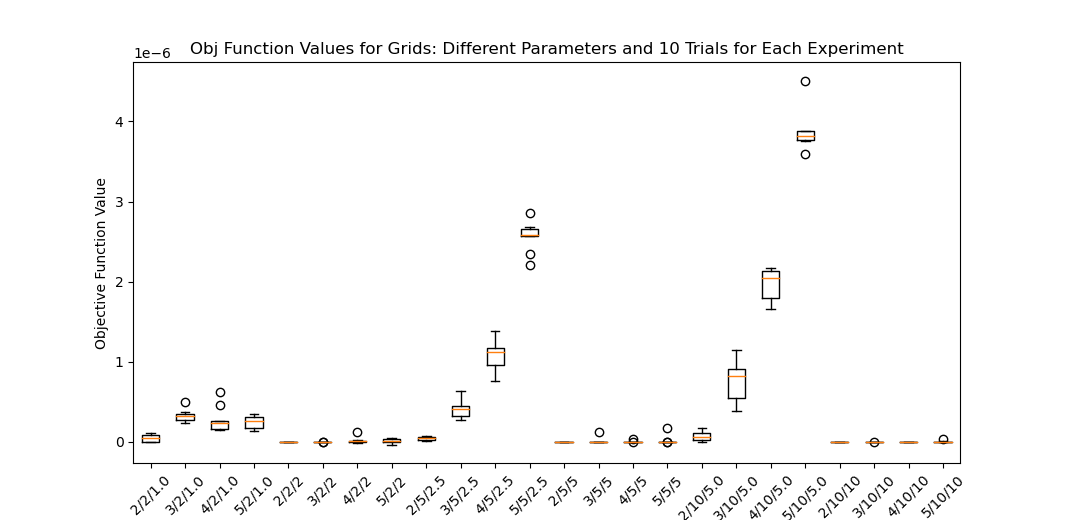}
\caption{Objective Function Values for Experiment Group 3: The labeling at the bottom of the graph indicates attributes of the boxplot, specifically the Number of Players/Alpha Value.  Note: Only 6 trials are included for 5/10/5.0, see note in the text.}\label{fig:obj_func_experiment_3}
\end{figure}

\begin{figure}[H]
\centering
\includegraphics[height=0.3\textheight,keepaspectratio]{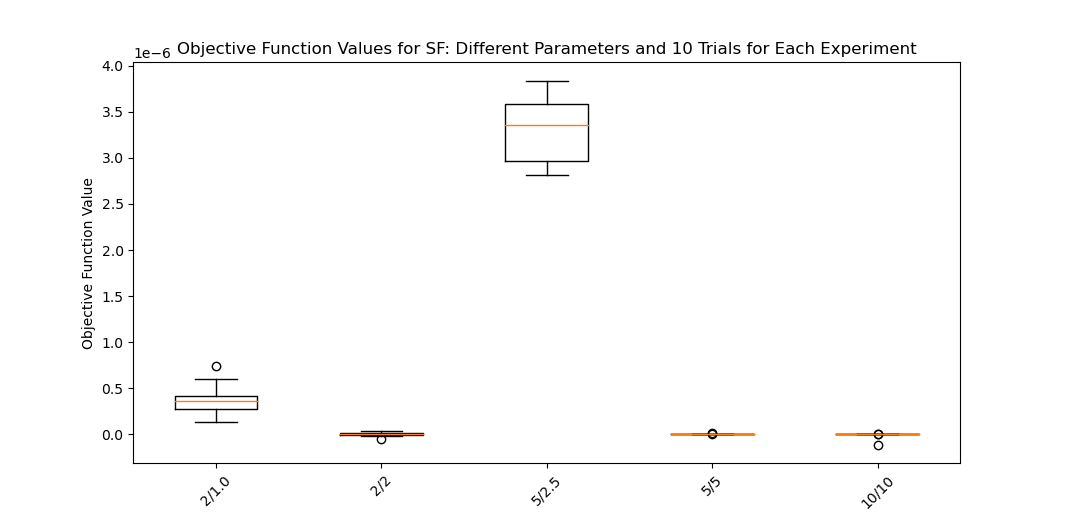}
\caption{Objective Function Values for Experiment Group 4: The labeling at the bottom of the graph indicates attributes of the boxplot, specifically the Number of Players/Alpha Value.  Note: The trials did not finish in time for $N=10$ and $\alpha=5$, hence that boxplot is not included.}\label{fig:obj_func_experiment_4}
\end{figure}

\subsection{Timing for the Experiment Groups}\label{appendix:timing_for_experiments}

In this subsection of the Appendix, we present the timing graphs for the experiment groups.  In Figure \ref{fig:timing_experiment_1}, timing increases with grid size for all of the subsets of $\alpha$ and number of players.  In Figure \ref{fig:timing_experiment_2}, the timing increases as the player number increases.  Both of these trends are understandable because of the effect on variable count that both grid size and player number have, as demonstrated by Appendix \ref{appendix:variable_number}.  In Figure \ref{fig:timing_experiment_3}, we see that all of the timing data is obscured by the large outlier of the experiment with grid 5x5, 10 players, and $\alpha=5$.  This experiment took quite a while and, as previously noted, only 6 of the trials for the experiment finished in under 24 hours (the 1st, 2nd, 3rd, 4th, 5th, and 7th).  We hypothesize that having the largest grid, the most number of players, the more constrained $\alpha$ value of the two, and the more difficult task of determining different sets of cost parameters for each player caused the large time amounts. In Figure \ref{fig:timing_experiment_4}, again noting that the trials did not finish in time for $N=10$ and $\alpha=5$, it appears that there are not trends to discuss, although note that all the experiments but $N=5, \alpha=2.5$ have consistent timing data across trials. 

\begin{figure}[H]
\centering
\includegraphics[height=0.3\textheight,keepaspectratio]{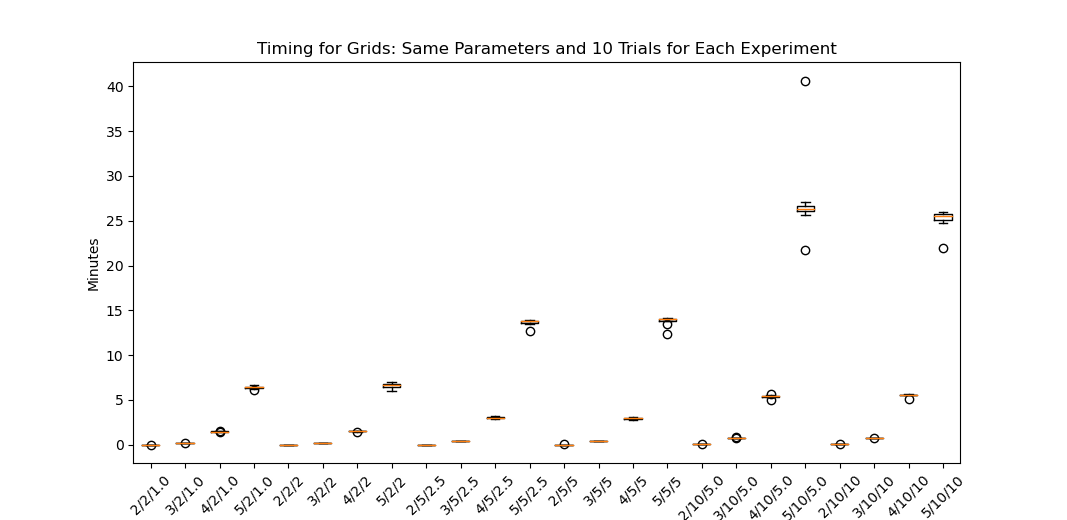}
\caption{Timing for Experiment Group 1: The labeling at the bottom of the graph indicates attributes of the boxplot, specifically the Grid Size/Number of Players/Alpha Value }\label{fig:timing_experiment_1}
\end{figure}

\begin{figure}[H]
\centering
\includegraphics[height=0.3\textheight,keepaspectratio]{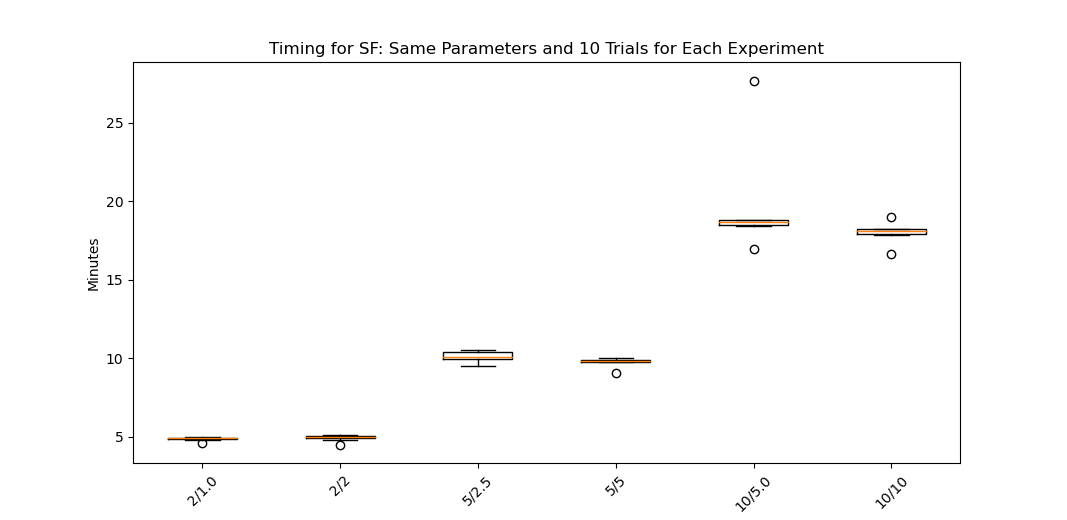}
\caption{Timing for Experiment Group 2: The labeling at the bottom of the graph indicates attributes of the boxplot, specifically the Number of Players/Alpha Value }\label{fig:timing_experiment_2}
\end{figure}

\begin{figure}[H]
\centering
\includegraphics[height=0.3\textheight,keepaspectratio]{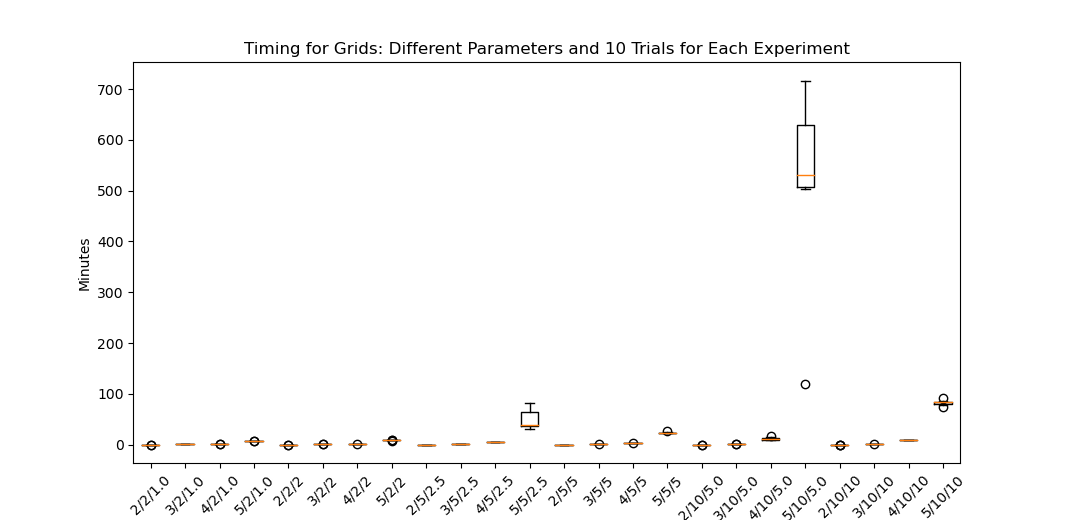}
\caption{Timing for Experiment Group 3: The labeling at the bottom of the graph indicates attributes of the boxplot, specifically the Grid Size/Number of Players/Alpha Value. Note: Only 6 trials are included for 5/10/5.0, see note in the text.}\label{fig:timing_experiment_3}
\end{figure}

\begin{figure}[H]
\centering
\includegraphics[height=0.3\textheight,keepaspectratio]{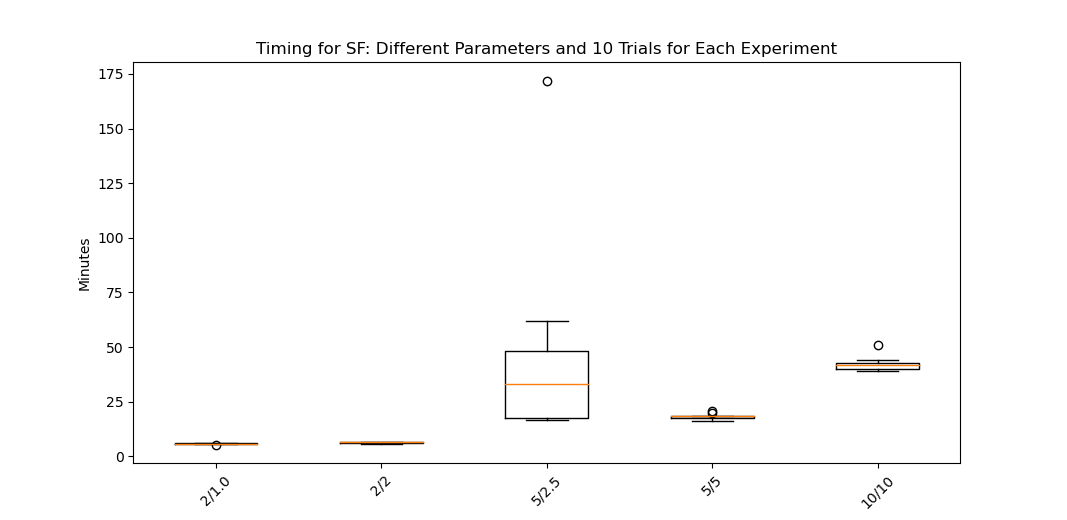}
\caption{Timing for Experiment Group 4: The labeling at the bottom of the graph indicates attributes of the boxplot, specifically the Number of Players/Alpha Value. Note: The trials did not finish in time for $N=10$ and $\alpha=5$, hence that boxplot is not included.
}\label{fig:timing_experiment_4}
\end{figure}

\section{Code Attribution}

In this section, we would like to cite the various packages and software that we used in the project.  We would also like to note that the code for this project was built off of another code base that is for another on-going project (not published yet).

\begin{itemize}
    \item Python Packages:
    \begin{itemize}
        \item \url{pyomo} \citep{hart2011pyomo,hart2017mathematical}
        \item \url{networkx} \citep{networkx_citation}
        
        \item \url{matplotlib} \citep{Hunter:2007} 
        
        \item \url{numpy} \citep{numpy_citation,numpy_citation_2,2020SciPy-NMeth} 
        
        \item \url{scipy} \citep{2020SciPy-NMeth}
        
        \item \url{pandas} \citep{mckinney2010data}
    \end{itemize}
    
    \item MATLAB 9.8.0.1323502 (R2020a) \citep{MATLAB:2020a}
    
    \item GAMS \citep{GAMS_software_33_2,GAMS_software_34_1} with PATH solver \citep{dirkse1995path,ferris2000complementarity} (PATH website \citep{path_website})
    
    \item Solvers:
    \begin{itemize}
        \item \url{gurobi} Version 9.1.1 \citep{gurobi_citation}; Gurobi Documentation \citep{gurobi_documentation}
        
        
    \end{itemize}
    
    \item Data: Sioux Falls data \citep{sioux_falls_data,leblanc1975efficient}
    
\end{itemize}

These are some of the helpful resources we utilized for writing our code: 

\begin{itemize}
\item These articles \citep{birge1992efficient,lustig1991formulating} were helpful in determining some of the constraints that equalized necessary variables in my code.

\item We learned much about simulation from \citet{dong2018generalized} and \citet{barmann2018online,barmann2017emulating}, specifically about running multiple trials and the importance of randomization.

\end{itemize}




\end{document}